%
%
%
%

\documentclass{amsart}
\usepackage{latexsym,amsxtra,amscd,ifthen,amsmath,color}
\usepackage{amsfonts}
\usepackage{verbatim}
\usepackage{amsmath}
\usepackage{amsthm}
\usepackage{amssymb}
\usepackage[notcite,notref,final]{showkeys}

\numberwithin{equation}{section}

\theoremstyle{plain}
\newtheorem{theorem}{Theorem}[section]
\newtheorem{lemma}[theorem]{Lemma}

\newtheorem{proposition}[theorem]{Proposition}
\newtheorem{hypothesis}[theorem]{Hypothesis}
\newtheorem{corollary}[theorem]{Corollary}

\newcommand{\inth}{\textstyle \int}
\theoremstyle{definition}
\newtheorem{definition}[theorem]{Definition}
\newtheorem{notation}[theorem]{Notation}
\newtheorem{example}[theorem]{Example}

\newtheorem{remark}[theorem]{Remark}
\newtheorem{question}[theorem]{Question}

\makeatletter              
\let\c@equation\c@theorem  
\makeatother

\DeclareMathOperator{\Rees}{Rees}

\DeclareMathOperator{\hdet}{hdet}

\DeclareMathOperator{\gldim}{gldim}

\DeclareMathOperator{\Spec}{Spec}

\DeclareMathOperator{\gr}{gr}

\DeclareMathOperator{\tr}{tr}

\DeclareMathOperator{\Aut}{Aut}

\DeclareMathOperator{\injdim}{injdim}

\DeclareMathOperator{\End}{End}

\newcommand{\ch}{\operatorname{char}}

\newcommand{\cohdet}{\mathrm{hcodet}}

\begin{document}

\title{Hopf actions on filtered regular algebras}

\author{K. Chan, C. Walton, Y.H. Wang and J.J. Zhang}

\address{Chan: Department of Mathematics, Box 354350,
University of Washington, Seattle, Washington 98195,
USA}

\email{kenhchan@math.washington.edu}

\address{Walton: Department of Mathematics, Massachusetts
Institute of Technology, Cambridge, Massachusetts 02139,
USA}

\email{notlaw@math.mit.edu}

\address{Wang: Department of Applied Mathematics,
Shanghai University of Finance and Economics, Shanghai 200433, China}

\email{yhw@mail.shufe.edu.cn}

\address{Zhang: Department of Mathematics, Box 354350,
University of Washington, Seattle, Washington 98195,
USA}

\email{zhang@math.washington.edu}


\subjclass[2010]{16E65, 16T05, 16T15, 16W70}

\keywords{Artin-Schelter regular algebra, filtered algebra, fixed subring, Hopf algebra
action, Weyl algebras}

\maketitle

\begin{abstract}
We study finite dimensional Hopf algebra actions on so-called filtered
Artin-Schelter regular algebras of dimension $n$, particularly on those of dimension 2. The first Weyl algebra is an example of such on algebra with $n=2$, for instance. Results on the Gorenstein condition and on the global dimension of the corresponding fixed subrings are also provided.
\end{abstract}

\maketitle


\setcounter{section}{-1}
\section{Introduction}
\label{xxsec0}

The main motivation for this paper (as well as for \cite{CWZ,CKWZ})
is to classify all finite dimensional Hopf algebras which act on a
given algebra $R$. By understanding the Hopf algebras $H$ which act
on $R$, we can further study other structures related to $R$, such
as the fixed ring $R^H$ and the smash product $R\#H$. The
prototype of this problem is classical: the classification of finite
subgroups $G$ of $SL_2(\mathbb{C})$ (that act faithfully on the
polynomial ring $\mathbb{C}[u,v]$) prompted the connection between
the McKay quiver of $G$ and the geometric features of the plane
quotient singularity $\Spec(\mathbb{C}[u,v]^G)$. In our
setting, the algebra $R$ is allowed to be
noncommutative and the Hopf algebras are allowed to be
noncocommutative. More precisely, we study finite dimensional Hopf
algebra actions on {\it filtered Artin-Schelter regular algebras of dimension $d$}. These
are filtered algebras whose associated graded algebras are Artin-Schelter regular algebras of global dimension $d$. Our emphasis
will be on the case of dimension $2$.

Here, we assume that the base field $k$ is algebraically
closed of characteristic zero, unless otherwise stated. Examples of
filtered Artin-Schelter regular algebras of dimension 2 include the first Weyl algebra
$A_1(k) = k\langle u,v \rangle/(vu-uv-1)$, quantum Weyl algebras
$k\langle u,v\rangle/(vu-quv-1)$ for some $q\in k^{\times}$, and other deformations of Artin-Schelter regular
algebras of dimension 2.

The invariant theory of $A_1 \left( k \right)$ by finite groups is already
interesting. For example, the fixed subrings of $A_1 \left( k \right)$ by
finite groups actions are completely classified and studied by Alev-Hodges-Velez in \cite{AHV}. Thus, it is
natural to ask if there are any
non-trivial finite dimensional Hopf algebra ($H$-) actions on the
first Weyl algebra. By a ``non-trivial" $H$-action, we mean that $H$
is neither commutative nor cocommutative Hopf algebra (or neither a
dual of a group algebra nor group algebra, respectively).
We give a negative answer to this question in Theorem \ref{thmxx0.1} below.

Recall that a left $H$-module $M$ is called {\it inner-faithful} if
$IM\neq 0$ for any nonzero Hopf ideal $I$ of $H$. Let $N$ be a right
H-comodule with comodule structure map $\rho : N \rightarrow
N\otimes H$, We say that this coaction is inner-faithful if
$\rho(N)\nsubseteq N\otimes H^\prime$ for any proper Hopf subalgebra
$H^\prime\subsetneq H$ \cite[Definition 1.2]{CWZ}. We say that {\it
a Hopf algebra $H$ (co)acts on an algebra $R$} if $R$ is a left
$H$-(co)module algebra. Moreover, if the $H$-(co)module $R$ is
inner-faithful, then we say that $H$ (co)acts on $R$ {\it
inner-faithfully}. 

\begin{theorem}
\label{thmxx0.1}
Let $R$ be a non-PI filtered Artin-Schelter regular algebra of
dimension 2 and let $H$ be a finite dimensional Hopf algebra acting
on $R$ inner-faithfully. If the $H$-action preserves the filtration
of $R$, then $H$ is a group algebra.

In particular, if $R$ is the first Weyl algebra $A_1(k)$, then $H$
is a group algebra.
\end{theorem}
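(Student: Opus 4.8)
The plan is to pass to the associated graded algebra and reduce to the known classification of Hopf actions on Artin--Schelter (AS) regular algebras of dimension 2. Since the $H$-action preserves the filtration $F_0 R \subseteq F_1 R \subseteq \cdots$ of $R$, it induces an $H$-action on $\operatorname{gr} R$, which by hypothesis is an AS regular algebra $A$ of global dimension 2. The first thing I would check is that the induced $H$-action on $A$ is again inner-faithful; this should follow because a Hopf ideal $I$ with $I \cdot \operatorname{gr} R = 0$ forces $I \cdot R \subseteq \sum_i F_{i-1} R$ on each filtration layer, and an induction on the filtration degree (using $F_{-1} R = 0$) then gives $I \cdot R = 0$, hence $I = 0$ by inner-faithfulness on $R$. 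The key point making this work is that $R$ is ``non-PI'', which passes to $\operatorname{gr} R$ and rules out the finite-dimensional-over-center pathologies.

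Next I would invoke the classification of finite dimensional Hopf algebras acting inner-faithfully on a non-PI AS regular algebra of dimension 2 — this is precisely the content of \cite{CWZ} (the companion paper referenced in the introduction), whose conclusion is that such $H$ must be a group algebra $k\Gamma$ for a finite group $\Gamma$ (equivalently, any such non-cocommutative action cannot occur; the only finite dimensional Hopf algebras acting are group algebras). So $\operatorname{gr} R$ admits only a group algebra action.

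Now I must lift this back to $R$ itself: knowing $H \cong k\Gamma$ after passing to $\operatorname{gr} R$, I want to conclude $H \cong k\Gamma$ as abstract Hopf algebras. This is immediate once I know the induced action on $\operatorname{gr} R$ is still inner-faithful, because inner-faithfulness of the $k\Gamma$-action means no nonzero Hopf ideal annihilates $\operatorname{gr} R$ — but $H$ itself surjects onto $k\Gamma$? No: rather, the $H$-action on $\operatorname{gr} R$ factors through the quotient $H \to H/I$ where $I$ is the largest Hopf ideal acting as zero on $\operatorname{gr} R$; by the inner-faithfulness established in step one, $I = 0$, so the $H$-action on $\operatorname{gr} R$ is itself inner-faithful, and \cite{CWZ} forces $H \cong k\Gamma$. (If one only has that $\operatorname{gr} R$ admits \emph{some} inner-faithful $k\Gamma$-action, one still needs the inner-faithfulness of the specific induced action, which is exactly why step one is essential.)

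I expect the main obstacle to be the first step — verifying that inner-faithfulness is inherited by the associated graded action. The subtlety is that a Hopf ideal $I \subseteq H$ acting by zero on each graded piece $\operatorname{gr}_i R = F_i R / F_{i-1} R$ only tells us $I \cdot F_i R \subseteq F_{i-1} R$; one must then iterate this inclusion down the filtration and use exhaustiveness ($R = \bigcup_i F_i R$) together with $F_{-1} R = 0$ to conclude $I \cdot R = 0$. One also needs to confirm that the non-PI hypothesis on $R$ genuinely transfers to $\operatorname{gr} R$ (so that the dichotomy in \cite{CWZ} applies in the non-PI branch) — if $\operatorname{gr} R$ were PI while $R$ is not, the argument would break, so I would want a lemma, presumably available earlier in the paper, that $R$ is PI if and only if $\operatorname{gr} R$ is. Once these two points are in place, the theorem follows formally, and the statement about $A_1(k)$ is the special case $\operatorname{gr} A_1(k) = k[u,v]$, which is non-PI AS regular of dimension 2.
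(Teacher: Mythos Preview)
Your proposal has two genuine gaps, and both are fatal to the argument as written.

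First, the claim that inner-faithfulness passes to $\gr R$ is false in general. Your induction only shows that if $I\cdot \gr R=0$ then $I\cdot F_iR\subseteq F_{i-1}R$ for each $i$; iterating gives $I^{i+1}\cdot F_iR=0$, so $I$ acts \emph{nilpotently} on $R$, not by zero. A nilpotent Hopf ideal need not vanish when $H$ is not semisimple. The paper in fact provides an explicit counterexample (Example~\ref{exxx3.2}): Sweedler's $4$-dimensional Hopf algebra acts inner-faithfully on $k[u]$ via $f\cdot u=1$, $g\cdot u=-u$, but the induced action on $\gr k[u]$ is not inner-faithful. The paper's Lemma~\ref{lemxx1.5}(b,c) shows that inner-faithfulness does transfer \emph{provided the action is proper}, which is automatic when $H$ is semisimple (Lemma~\ref{lemxx1.5}(a)) --- but of course semisimplicity of $H$ is part of what you are trying to prove.

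Second, your final sentence asserts that $k[u,v]=\gr A_1(k)$ is non-PI, but commutative algebras are PI. So precisely in the Weyl algebra case the reduction to \cite[Theorem~5.10]{CWZ} (which requires non-PI) fails. Lemma~\ref{lemxx2.9} in the paper confirms the dichotomy: if $R$ is non-PI then $\gr_F R$ is either non-PI or commutative, and the commutative branch cannot be avoided.

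The paper's proof therefore splits into cases. When $\gr_FR$ is non-PI, one cannot simply quote \cite{CWZ}: rather one passes to the $K=H^\circ$-coaction, applies \cite{CWZ} to the Hopf subalgebra $K'\subseteq K$ generated by the coaction on $U\cong F_1/F_0$ (this \emph{is} inner-faithful by construction), and then does genuine additional work (Lemma~\ref{lemxx5.4}, using the skew-primitive lemma~\ref{lemxx2.2}) to force the remaining generators $f_1,f_2\in K$ into $kG(K)$. When $\gr_FR\cong k[u,v]$, one shows $R$ is either $k\langle u,v\rangle/(vu-uv-v)$ (handled by a direct coaction computation, Lemmas~\ref{lemxx5.6}--\ref{lemxx5.7}) or $A_1(k)$. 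For $A_1(k)$ the paper first proves Theorem~\ref{thmxx0.3} --- that $H$ is semisimple --- via a substantial argument with the Rees ring, its Koszul dual, and a trace computation forcing the homological codeterminant to be trivial. Only \emph{after} semisimplicity is established does the action become proper, inner-faithfulness transfer to $\gr_FR=k[u,v]$, and \cite[Proposition~0.7]{CWZ} finish the job.
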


Theorem \ref{thmxx0.1} can be viewed as an extension of
\cite[Theorem 5.10]{CWZ} from the non-PI graded case to the non-PI filtered case. Most results in this work concern actions on non-PI AS regular algebras.
In particular, combining Theorem \ref{thmxx0.1} with \cite[Proposition,
page 84]{AHV}, one classifies all finite-dimensional Hopf algebras
acting inner-faithfully on $A_1(k)$ with respect to the standard
filtration [Corollary \ref{corxx5.8}]. Similarly, all finite
dimensional Hopf algebras actions on the quantum Weyl algebras
$k\langle u,v\rangle/(vu-quv-1)$, for $q$ not a root of unity, are
classified [Corollary \ref{corxx5.9}(a)]. On the other hand, if $R$ is PI filtered AS regular, then there are
many interesting finite dimensional Hopf algebras (which are not
group algebras) which act on $R$; see Examples~\ref{exxx1.4} and~\ref{exxx3.4} for
instance.

Regarding the higher dimensional Weyl algebras, it is natural to ask
the following question.

\begin{question}
\label{quexx0.2} Let $A_n(k)$ be the $n$-th Weyl
algebra and let $H$ be a finite dimensional Hopf algebra acting on
$A_n(k)$ inner-faithfully. Is then $H$ a group algebra?
\end{question}

If we assume that the $H$-action is filtration preserving, then the
answer is yes if $n=1$ [Theorem \ref{thmxx0.1}] or if $H$ is pointed
[Theorem \ref{thmxx0.3}].

\begin{theorem}
\label{thmxx0.3} Let $H$ be a finite dimensional Hopf algebra acting
on the $n$-th Weyl algebra $A_n(k)$ inner-faithfully which preserves
the standard filtration of $A_n(k)$. Then $H$ is semisimple. If, in
addition, $H$ is pointed, then $H$ is a group algebra.
\end{theorem}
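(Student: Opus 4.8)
The plan is to first pass to the associated graded action. Since the $H$-action preserves the standard filtration of $A_n(k)$, it induces an $H$-action on $\gr A_n(k) \cong k[x_1,\dots,x_{2n}]$, the commutative polynomial ring in $2n$ variables, which is AS regular of dimension $2n$. I would then invoke (or adapt) the graded theory from \cite{CWZ}: a finite dimensional Hopf algebra acting inner-faithfully on a commutative polynomial ring must act through a quotient that is very restricted. The key structural input is that the filtration on $A_n(k)$ is induced by a grading up to a symplectic/Poisson perturbation, so the graded action and the filtered action have the same ``linear part''; in particular the generating degree-one piece $V = kx_1\oplus\cdots\oplus kx_{2n}$ of $\gr A_n(k)$ is an $H$-submodule, and the $H$-action on $A_n(k)$ is determined by an $H$-module structure on $V$ together with compatibility with the symplectic form coming from the Weyl relations $[u_i,v_i]=1$.

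For semisimplicity, I would argue by contradiction: if $H$ is not semisimple, then (char $k = 0$) the Jacobson radical $J = J(H)$ is a nonzero nilpotent Hopf-stable ideal, and one shows that $J$ acts trivially on $A_n(k)$ — for instance because a nilpotent element of $H$ acting on a domain of the right homological type must act as a ``locally nilpotent'' operator that is forced to vanish, or by a trace/integral argument showing $R\#H$ would fail a property that it must have (e.g. the smash product with $A_n(k)$ should again be of finite global dimension / a domain-like condition). This contradicts inner-faithfulness. The cleanest route is probably: reduce to the graded case via $\gr$, where one already knows (from \cite{CWZ} or Molien-type arguments on $k[x_1,\dots,x_{2n}]$) that an inner-faithful action of a finite dimensional Hopf algebra on a commutative polynomial ring forces $H$ to be semisimple; then lift semisimplicity back to $H$ since $H$ acting inner-faithfully on the filtered algebra forces the graded action to be inner-faithful too (any Hopf ideal killing the graded action would, via the filtration, kill the filtered action).

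Once $H$ is semisimple, assume in addition $H$ is pointed. A finite dimensional pointed semisimple Hopf algebra over an algebraically closed field of characteristic zero is a group algebra — this is a standard consequence of the structure theory (a pointed Hopf algebra has its coradical equal to the group algebra of its grouplikes, and semisimplicity forces the coradical filtration to be trivial, so $H = k[G(H)]$). Hence $H$ is a group algebra. The main obstacle I anticipate is the semisimplicity step: making rigorous the claim that a non-semisimple finite dimensional Hopf algebra cannot act inner-faithfully on $A_n(k)$ (equivalently on $k[x_1,\dots,x_{2n}]$), since the nilpotent part of $H$ wants to act by ``higher-order'' operators that the domain/regularity of the algebra should obstruct; the filtration-preserving hypothesis is exactly what lets us linearize and run a Maschke/trace argument on the degree-one part $V$, so I expect the proof to hinge on analyzing how $J(H)$ acts on $V$ and showing this action must be zero.
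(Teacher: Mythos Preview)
Your proposal has a genuine gap in the semisimplicity step, and it is precisely the one you flagged as the ``main obstacle'' --- but your proposed fix does not work.

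The issue is the claim that inner-faithfulness passes from $A_n(k)$ to $\gr_F A_n(k)\cong k[x_1,\dots,x_{2n}]$. Your parenthetical justification (``any Hopf ideal killing the graded action would, via the filtration, kill the filtered action'') is false: if a Hopf ideal $I$ annihilates $\gr_F A_n(k)$, that only says $I\cdot F_i\subseteq F_{i-1}$ for all $i$, not $I\cdot F_i=0$. Indeed, Example~\ref{exxx3.2}(d) in this paper exhibits exactly this failure. Worse, the paper's own proof of Theorem~\ref{thmxx0.1} uses semisimplicity of $H$ (via Theorem~\ref{thmxx0.3}) to deduce properness, and \emph{then} properness to get inner-faithfulness on $\gr_F R$ --- so your approach would be circular. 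Your alternative route through the Jacobson radical is too vague to assess; there is no general principle that nilpotent elements of $H$ must act trivially on a domain, and you would need a concrete mechanism.

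The paper sidesteps this entirely by passing not to $\gr_F A_n(k)$ but to the \emph{Rees ring} $B=\Rees_F A_n(k)$. Inner-faithfulness does transfer to $B$, because $A_n(k)\cong B/(t-1)$ is an $H$-module algebra quotient of $B$. The point is that $B$ is Koszul AS~regular of dimension $2n+1$ and, crucially, Calabi--Yau (Lemma~\ref{lemxx4.1}). The heart of the argument is then a computation in the Koszul dual $B^!$: one shows $\rho(t^*)^{2n+1}=(1+p)\otimes (t^*)^{2n+1}$ with $\tr(p)=0$ (Lemma~\ref{lemxx4.4}), so the homological codeterminant ${\sf D}=1+p$ is a grouplike element of finite order with $\tr({\sf D})=\dim K$, forcing ${\sf D}=1$. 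Trivial homological determinant on a Calabi--Yau algebra then gives semisimplicity by \cite[Theorem~0.6]{CWZ}. Your argument for the pointed case (semisimple $\Rightarrow$ cosemisimple $\Rightarrow$ $H=H_0=kG(H)$) is correct and matches the paper.
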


In the setting of $H$-actions on graded algebras, we have the following
result of Kirkman-Kuzmanovich-Zhang. Suppose that $H$ is a semisimple finite dimensional Hopf algebra and
$R$ is an AS regular algebra. If $H$ acts on $R$ preserving the grading with
trivial homological determinant, then the fixed subring $R^H$ is AS
Gorenstein \cite[Theorem 0.1]{KKZ}. We can obtain a filtered analogue of the above by considering the
induced $H$-action on $\gr_F R$.

\begin{theorem}
\label{thmxx0.4} Let $R$ be a filtered  AS regular algebra of
dimension 2 and let $H$ be a semisimple Hopf algebra acting on $R$.
If the $H$-action is not graded and it preserves the filtration of $R$,
then the fixed subring $R^H$ is filtered AS Gorenstein.
\end{theorem}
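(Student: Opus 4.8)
The plan is to reduce this to the graded result of Kirkman–Kuzmanovich–Zhang \cite[Theorem 0.1]{KKZ} by passing to associated graded algebras, and then to lift the Gorenstein conclusion back to the filtered setting. First I would observe that since the $H$-action preserves the filtration $F$ of $R$, it induces an $H$-action on $\gr_F R$, which is an AS regular algebra of dimension $2$ by hypothesis; moreover $H$ remains semisimple. The key input is the assumption that the $H$-action on $R$ is \emph{not graded}: I would want to use this together with the structure of filtered AS regular algebras of dimension $2$ (these were presumably classified or at least described earlier in the paper — the Weyl algebra, quantum Weyl algebras, and other deformations) to argue that the induced action on $\gr_F R$ has \emph{trivial homological determinant}. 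This is the conceptual heart of the argument: a non-graded filtration-preserving action forces the ``leading-term'' action on the degree-$2$ (or top-degree) part of $\gr_F R$ to be of a constrained type, and in the non-PI dimension-$2$ case one expects the homological determinant to be forced to be trivial — intuitively because the automorphism of $\gr_F R$ coming from $\gr_F$ of a non-graded $H$-action cannot ``twist'' the canonical module. I would likely invoke Theorem \ref{thmxx0.1} or its proof here: in the non-PI case filtration-preserving actions are by group algebras, and for the relevant group actions one checks the homological determinant is trivial; in the PI case a direct computation on the two-dimensional AS regular algebras suffices.

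Once trivial homological determinant of the induced $H$-action on $\gr_F R$ is established, \cite[Theorem 0.1]{KKZ} gives that $(\gr_F R)^H$ is AS Gorenstein. The next step is to identify $(\gr_F R)^H$ with $\gr_F(R^H)$, where $R^H$ carries the induced filtration. Since $H$ is semisimple, the averaging (Reynolds) operator $e\colon R \to R^H$ is a filtered $R^H$-bimodule projection, which implies that the natural map $\gr_F(R^H) \to (\gr_F R)^H$ is an isomorphism (surjectivity follows from applying $\gr_F$ to the splitting $R = R^H \oplus (1-e)R$, and injectivity is immediate since $F$ on $R^H$ is induced from $F$ on $R$). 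Therefore $\gr_F(R^H)$ is AS Gorenstein.

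Finally I would lift the Gorenstein property from $\gr_F(R^H)$ to $R^H$. This is a standard filtered-to-graded argument: if the associated graded ring $\gr_F S$ of a filtered ring $S$ (with exhaustive, separated, positive filtration) is (connected graded, noetherian) AS Gorenstein of injective dimension $d$, then $S$ is filtered AS Gorenstein of the same injective dimension — one uses that $\Ext^i_S(k,S)$ is computed via a filtered free resolution whose associated graded computes $\Ext^i_{\gr_F S}(k,\gr_F S)$, together with a spectral-sequence or filtered-complex comparison. The notion of ``filtered AS Gorenstein'' should already be defined in the paper, and this lifting statement is presumably recorded as a lemma earlier; if not, it is routine.

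I expect the main obstacle to be the second sentence of the first paragraph: verifying that a non-graded, filtration-preserving, semisimple $H$-action on a two-dimensional filtered AS regular algebra induces an action on $\gr_F R$ with trivial homological determinant. Everything else (inducing actions on $\gr_F$, the Reynolds splitting, the filtered lifting of Gorenstein-ness) is formal. The subtlety in the hard step is that we are \emph{not} assuming the action on $R$ itself is trivial in any sense — only that it does not preserve a grading — and one must extract from this non-gradedness precise control over the homological determinant of the graded-ified action. I would handle this by a case analysis over the (short) list of filtered AS regular algebras of dimension $2$ and the (classified, via Theorem \ref{thmxx0.1} and the PI analysis) list of Hopf algebras that can act, checking triviality of the homological determinant in each case.
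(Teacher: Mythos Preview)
Your overall architecture — pass to $\gr_F R$, invoke \cite[Theorem 0.1]{KKZ}, identify $(\gr_F R)^H$ with $\gr_{F'}(R^H)$ via the Reynolds operator, and conclude — matches the paper's argument in one of the two cases. But the step you flag as the ``conceptual heart'' is not merely a subtlety; it is false as stated. A non-graded, filtration-preserving semisimple $H$-action on a filtered AS regular algebra of dimension $2$ does \emph{not} force the induced action on $\gr_F R$ to have trivial homological determinant. The paper's Lemma~\ref{lemxx2.7}(e) shows precisely the opposite: if the relation of $R$ has nonzero linear part (the case $(a,b)\neq(0,0)$), then the homological determinant on $\gr_F R$ is non-trivial. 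Concretely, take $R=k\langle u,v\rangle/(vu-uv-v)$ (which is non-PI) with $H=kC_m$ acting by $h\cdot u=u$, $h\cdot v=\xi v$: this action is not graded, preserves the filtration, yet $\hdet_H \gr_F R\neq\epsilon$. Your proposed case analysis over Theorem~\ref{thmxx0.1} and the classification would therefore not yield the conclusion you expect.

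The paper repairs this by splitting on the homological determinant rather than trying to force it to be trivial. When $\hdet$ is trivial, the argument is exactly yours (KKZ plus Lemma~\ref{lemxx3.1}(c), the Reynolds-operator identification). When $\hdet$ is non-trivial, Lemma~\ref{lemxx2.7} pins down the situation completely: $H\cong kC_m$, the relation is $vu-uv-v$, and the action is diagonal as above. Proposition~\ref{proxx3.3} then computes $R^H$ directly as the Ore extension $k[u][v^m,\sigma]$, which has global dimension $2$ and is (filtered) AS Gorenstein. So the missing idea is not a sharper triviality argument but an explicit computation in the one exceptional case.

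One further remark: your ``lifting'' step at the end is unnecessary. By Definition~\ref{defxx1.1}, ``$R^H$ is filtered AS Gorenstein'' \emph{means} that $\gr_{F'}(R^H)$ is AS Gorenstein, so once you know the latter you are done by definition — no spectral sequence is needed.
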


Note that if $R$ is a graded $H$-module algebra, the fixed ring $R^H$ can
fail to be AS Gorenstein. This is well known if $R=k[u,v]$ by the existence of
non-Gorenstein quotient singularities. See also \cite[Proposition 6.5(2)]{JiZ} for
a noncommutative example.

On the other hand, we also study the regularity of fixed rings when $H$ is not necessarily semisimple. In the graded
case, \cite[Proposition 0.7]{CKWZ} states that if $R$ is graded AS
regular of global dimension 2, and the $H$-action on $R$ has trivial
homological determinant, then $R^H$ is \underline{never} AS regular provided that $H \neq k$. Therefore,
the following result is quite surprising.

\begin{theorem}
\label{thmxx0.5} Let $R$ be a non-PI filtered AS regular algebra of
dimension 2 and let $H$ be a finite dimensional Hopf algebra acting
inner-faithfully on $R$. If the $H$-action on $R$ is not graded and preserves the
filtration on $R$, then the fixed subring $R^H$ has global dimension
$1$ or $2$.
\end{theorem}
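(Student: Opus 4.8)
The plan is to combine the structural dichotomy from Theorem \ref{thmxx0.1} with a dimension-counting argument on the associated graded ring. First I would invoke Theorem \ref{thmxx0.1}: since $R$ is non-PI filtered AS regular of dimension $2$ and $H$ acts inner-faithfully preserving the filtration, $H$ must be a group algebra, say $H = kG$ for a finite group $G$. Moreover the hypothesis that the $H$-action is \emph{not} graded is crucial: it means that the induced action of $G$ on $\gr_F R$ (which is graded AS regular of dimension $2$, hence either a quantum polynomial ring $k_q[u,v]$ or a Jordan plane) does not see all of $G$ faithfully. So I would first analyze the kernel $N \trianglelefteq G$ of the induced graded action and argue, using inner-faithfulness of the filtered action together with the fact that $\gr_F R$ is generated in degree $1$, that $N$ acts on $R$ through scalars on the generators — i.e. via a group of ``pseudo-reflection-free'' but in fact unipotent-type automorphisms of the filtered algebra. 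The point is that a filtered automorphism trivial on $\gr_F R$ is close to unipotent, and a finite-order such automorphism in characteristic zero is forced to be trivial unless it acts nontrivially on the degree-zero ``translation'' part of the filtration.

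Next I would reduce to the two model cases for $R$: the first Weyl algebra $A_1(k)$ (and its twists) on one hand, and the quantum Weyl algebras on the other, using the classification of non-PI filtered AS regular algebras of dimension $2$ underlying the proof of Theorem \ref{thmxx0.1}. In each case one has an explicit presentation $R = k\langle u,v\rangle/(vu - q uv - 1)$ with $q$ not a root of unity (the non-PI condition), and a non-graded filtered automorphism of finite order must act by $u \mapsto \alpha u + a$, $v \mapsto \beta v + b$ with $(\alpha,\beta)$ a root of unity pair compatible with the relation and $(a,b)$ not both zero. I would then show that the fixed ring $R^G$ can be computed, or at least its global dimension bounded, by passing to $\gr$: there is the standard inequality $\gldim R^G \le \gldim \gr_F(R^G) \le \gldim (\gr_F R)^{\bar G}$ when the filtration on $R^G$ is the one induced from $R$ and $\gr$ behaves well. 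The induced graded fixed ring $(\gr_F R)^{\bar G}$ is a graded fixed ring of a quantum plane by a finite group — of global dimension at most $2$ always, and exactly $2$ when $\bar G$ contains no quasi-reflections. The twist by the non-graded part of the action is what can \emph{drop} the global dimension to $1$: the affine ``translation'' directions can make $R^G$ look like (a localization/deformation of) a polynomial ring in one variable, as happens for $A_1(k)^G$ when $G$ acts with a nontrivial translation component.

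The main obstacle I expect is controlling the interaction between the graded and non-graded parts of the $G$-action when computing $\gldim R^G$ — specifically, showing that $\gr_F(R^G) = (\gr_F R)^{\bar G}$, i.e. that the induced filtration on the fixed ring is the ``expected'' one and no unexpected new elements of lower order appear in $R^G$. This requires an averaging/Reynolds-operator argument, which is available since $H = kG$ is semisimple in characteristic zero, but one must check it is compatible with the filtration (the Reynolds operator $\frac{1}{|G|}\sum_{g} g$ is filtration-preserving, so this should go through). The secondary difficulty is ruling out global dimension $0$: $R^G$ cannot be finite-dimensional over $k$ because it contains, e.g., the image of a central or normal element of positive degree, so $\gldim R^G \ge 1$; and the upper bound $\gldim R^G \le \gldim (\gr_F R)^{\bar G} \le 2$ finishes the proof. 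I would organize the write-up as: (i) reduce to $H = kG$ and set up $\bar G$; (ii) prove $\gr_F(R^G) = (\gr_F R)^{\bar G}$ via the filtered Reynolds operator; (iii) bound $\gldim (\gr_F R)^{\bar G} \in \{1,2\}$ — here using that a non-graded action forces $\bar G$ to be a proper quotient, combined with the known fact $\gldim k_q[u,v]^{\bar G} \le 2$; (iv) conclude via the standard filtered-to-graded global-dimension inequality and the lower bound $\gldim R^G \ge 1$.
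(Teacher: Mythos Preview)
Your approach has a fatal gap in step (iii): the claim that $\gldim (\gr_F R)^{\bar G} \le 2$ is false in precisely the cases you need. The paper explicitly recalls (citing \cite[Proposition 0.7]{CKWZ}) that when $A$ is graded AS regular of global dimension $2$ and $H \neq k$ acts with trivial homological determinant, the graded fixed ring $A^H$ is \emph{never} AS regular---so its global dimension is infinite. Concretely, take $R = k\langle u,v\rangle/(vu - quv - 1)$ with $q$ not a root of unity and $G = C_m$ acting by $u \mapsto \xi u$, $v \mapsto \xi^{-1} v$ (this is exactly Corollary \ref{corxx5.9}(a)): then $(\gr_F R)^G = k_q[u,v]^{C_m}$ is a noncommutative Kleinian-type singularity, AS Gorenstein but of infinite global dimension. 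Your filtered-to-graded inequality $\gldim R^G \le \gldim (\gr_F R)^G$ is correct but useless, because the right-hand side is $\infty$. There is also a misreading of the hypothesis: ``not graded'' (Definition \ref{defxx1.6}) means $R \not\cong \gr_F R$ as $H$-module algebras, not that the induced graded action loses faithfulness. Since $H = kG$ is semisimple, Lemma \ref{lemxx1.5} shows the induced action on $\gr_F R$ \emph{is} inner-faithful, so your kernel $N$ is trivial and the discussion of unipotent filtered automorphisms does not get off the ground.

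The paper's argument is entirely different and does not pass through $\gr_F$ for the global-dimension bound. After reducing via Theorem \ref{thmxx0.1} and Corollary \ref{corxx5.9} to an explicit finite list of pairs $(H,R)$, it shows case by case (Propositions \ref{proxx6.5}, \ref{proxx6.6}, Lemma \ref{lemxx6.7}) that the extension $R^H \subset R$ is $H^\circ$-\emph{Galois}, by verifying that $R$ is strongly graded over the relevant cyclic group (Lemma \ref{lemxx6.4}); the constant term $-1$ in the relation is exactly what makes $1 \in R_1 R_{m-1}$. Galois then gives a Morita equivalence $R^H \sim R \# H$, and semisimplicity of $H$ yields $\gldim R^H = \gldim(R \# H) = \gldim R = 2$. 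The Weyl case $R = A_1(k)$ is handled by \cite{AHV}, giving $\gldim R^G = 1$, and the one non-Galois case $r = vu - uv - v$ is computed directly (Proposition \ref{proxx3.3}). The moral is that finiteness of $\gldim R^H$ here is a genuinely filtered phenomenon, invisible at the associated-graded level.
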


This result is well-known when $R$ is the first Weyl algebra
$A_1(k)$ as the corresponding fixed subrings are all hereditary \cite{AHV}. On the other hand, it would be interesting to prove versions of Theorems
\ref{thmxx0.4} and \ref{thmxx0.5} in the higher dimensional case.
\begin{remark}
Suppose that $H$ is semisimple and $A_n(k) \# H$ is simple. 
Then by \cite[Corollary~4.5.5]{Mo}, we have a  Morita 
equivalence between $A_n(k)^H$
and $A_n(k)\# H$. As a consequence, gldim $A_n(k)^H$  
= gldim $A_n(k) \# H$  = gldim $A_n(k)$. \end{remark}

Moreover, observe that since $R$ is non-PI in Theorem 
\ref{thmxx0.5}, there are no
non-trivial Hopf actions on $R$ by Theorem \ref{thmxx0.1}.

The paper is organized as follows. We define basic terminology and
we also discuss certain properties of Hopf actions on filtered
algebras in Section \ref{xxsec1}. In Section \ref{xxsec2}, we make
several initial computations on the structure of a Hopf algebra $H$
and a filtered AS regular algebra $R$ of dimension 2, particularly
when the $H$-(co)action on $R$ is so-called {\it proper}. We provide
preliminary results about the fixed subring $R^H$ in Section
\ref{xxsec3}, and we also prove Theorem \ref{thmxx0.4} here. The
proof of Theorem \ref{thmxx0.3} is presented in Section
\ref{xxsec4}. In Section 5, we prove Theorem \ref{thmxx0.1} and we
use it to classify Hopf actions on non-PI filtered AS regular
algebras of dimension $2$. Finally in Section \ref{xxsec6}, we prove
Theorem \ref{thmxx0.5} with use of Galois extensions.

\section{Definitions}
\label{xxsec1}

In this work, we study Hopf actions on filtered algebras whose
associated graded algebras are Artin-Schelter (AS) regular algebras
of global dimension 2. We refer to \cite[Definition 1.1]{CWZ} for
the definition of AS regular algebras in general, but in global
dimension 2 (and generated in degree 1), we have that such algebras
are isomorphic to either:
\[
\begin{array}{lll}
\text{(i)}  &k_J[u,v]:= k\langle u,v\rangle/(vu-uv-u^2)
& \text{({\it the Jordan plane})}; \text{~or} \\
\text{(ii)}  &k_q[u,v]:=k\langle u,v\rangle/(vu-quv) \text{~for some
$q\in k^{\times}$} & \text{({\it the skew polynomial ring})}.
\end{array}
\]
 For filtered algebras, we have the following definition.

\begin{definition}
\label{defxx1.1} An algebra $R$ is called {\it filtered AS regular
of dimension $d$} (respectively, {\it filtered AS Gorenstein}) if
the following conditions hold:
\begin{enumerate}
\item
$R$ is generated by a finite dimensional subspace $U$ with $1\not\in U$, and
\item
for $F_n=(k1+U)^n$,
the associated graded ring
$$\gr_F R:=\bigoplus_{n\geq 0} F_n/F_{n-1}$$
is an AS regular algebra of global dimension $d$ (respectively, is
an AS Gorenstein algebra).
\end{enumerate}
\end{definition}

We define below {\it actions} of Hopf algebras on filtered
regular algebras, and in particular, actions that are so-called {\it proper}.

\begin{notation}
\label{notxx1.2} We denote by $R$ a filtered AS regular algebra of
dimension $d$. If $d=2$, then the generating vector space $U$ has dimension 2 and we use $\{u,v\}$ for its
basis. Unless otherwise stated, $H$ and $K$ are finite
dimensional Hopf algebras. Here, the Hopf algebras have Hopf
structure denoted by the standard notation $H = (H, m, \Delta, u,
\epsilon, S)$. Moreover, for the following
definition, we denote the left $H$-action on $R$ by $\cdot: H
\otimes R \rightarrow R$, and right $K$-coaction on $R$ by $\rho: R
\rightarrow R \otimes K$.
\end{notation}

Since $R$ is a filtered algebra, we require Hopf actions
(or Hopf coactions) to preserve the given filtration of $R$. We give only
the following definitions for Hopf actions, but similar definitions
can be made for Hopf coactions with the obvious changes.

\begin{definition}
\label{defxx1.3} We say a Hopf algebra $H$ {\it acts on a filtered
algebra} $R$ if
\begin{enumerate}
\item
$R$ is a left $H$-module algebra, and
\item
$k1+U$ is a left $H$-module.
\end{enumerate}
We say the $H$-action on $R$ is {\it proper} if, further,
\begin{enumerate}
\item[(c)]
there is a choice of $U$ as in Definition \ref{defxx1.1}
such that $U$ is a left $H$-module.
\end{enumerate}
\end{definition}

Let us provide an example of a non-proper  Hopf coaction on a PI filtered AS regular algebra.

\begin{example}
\label{exxx1.4} Let $R$ be the quantum Weyl algebra $k\langle
u,v\rangle/(vu+uv-1)$ (where the parameter $q$ equals $-1$), and let
$K$ be the Sweedler's non-semisimple 4-dimensional Hopf algebra
$k\langle g, f\rangle/(fg+gf, g^2-1, f^2)$. Here, $g$ is grouplike
and $f$ is $(1,g)$-primitive, to say,
$$\Delta(g) = g \otimes g \quad \quad \text{and}
\quad \quad \Delta(f)=1\otimes f+f\otimes g.$$
Moreover,
$\epsilon(g) =1$, $\epsilon(f) = 0$, $S(g) = g$, and $S(f) = -fg$.
Define a $K$-coaction on $R$ by
$$\begin{aligned}
\rho(u)=u\otimes g \quad \quad \text{and} \quad \quad
\rho(v)=v\otimes g + 1\otimes f.
\end{aligned}
$$
Then, $K$ coacts on $R$ inner-faithfully. We show that the induced $K^{\ast}=H$-action
is not proper, hence the $K$-coaction above is not proper.

Let $e_1, e_g, e_f, e_{g f}$ denote the dual basis of $H$. Then
$\gamma := e_1 - e_g$ and $\delta := e_f - e_{g f}$ generate $H$ as
an algebra. Moreover $\gamma \cdot 1=1$, $\gamma \cdot u=-u$,
$\gamma \cdot v=-v $ and $\delta \cdot 1 =\delta \cdot u = 0$,
$\delta \cdot  v  = 1$. By linear algebra, there is no
$2$-dimensional $\delta$-invariant subspace of $F_1=k1\oplus kv
\oplus ku$ not containing $1$. So the $H$-action is not proper.
\end{example}

We require (co)actions that are `faithful' in our setting. We refer
to \cite[Section~1]{CWZ} for a discussion of inner-faithful Hopf
(co)actions, and we repeat some of these results here.

\begin{lemma}
\label{lemxx1.5} Let $H$ be a Hopf algebra that acts on a filtered
algebra $R$.
\begin{enumerate}
\item
If $H$ is semisimple, then every $H$-action on $R$ is proper.
\item
\cite[Lemma 1.3(c)]{CWZ}
The $H$-action on $R$ is inner-faithful if
and only if the $H$-module $k1+U$ is inner-faithful.
\item
If the $H$-action is proper, the $H$-module $k1+U$ is inner-faithful if and
only if the $H$-module $U$ is inner-faithful.
\item \cite[Lemma 1.3(a)]{CWZ}
If $H$ is finite dimensional with Hopf algebra dual $H^\circ$, then the
$H$-action is inner-faithful if and only if the $H^\circ$-coaction is
inner-faithful.
\end{enumerate}
\end{lemma}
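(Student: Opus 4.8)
The plan is to establish parts (a) and (c) directly, since (b) and (d) are quoted verbatim from \cite{CWZ}.

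\textbf{Part (a).} The crucial point is that $k1$ is automatically an $H$-submodule of $F_1=k1+U$: because $R$ is an $H$-module algebra, the unit map $k\to R$ is $H$-linear for the trivial $H$-action $\epsilon$ on $k$, so $h\cdot 1_R=\epsilon(h)1_R$ for all $h\in H$. Now assume $H$ is semisimple. Then the finite dimensional $H$-module $F_1$ is semisimple, so the submodule $k1$ admits an $H$-module complement $U'\subseteq F_1$, i.e. $F_1=k1\oplus U'$ as $H$-modules. I claim $U'$ is an admissible choice of generating subspace in the sense of Definition \ref{defxx1.1}: it is finite dimensional, $1\notin U'$ (since $U'\cap k1=0$), and $k1+U'=F_1=k1+U$, so $U'$ generates $R$ and produces exactly the same filtration, $(k1+U')^n=(k1+U)^n=F_n$; in particular $\gr_F R$ is unchanged and is still AS regular. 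Since $U'$ is by construction a left $H$-module, Definition \ref{defxx1.3}(c) holds, so the $H$-action is proper.

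\textbf{Part (c).} Fix a proper choice of $U$, so that $U$ is a left $H$-submodule of $F_1$ and, combined with the observation above, $F_1=k1\oplus U$ as $H$-modules. Let $I\subseteq H$ be any Hopf ideal; then $\epsilon(I)=0$, hence $I\cdot(k1)=k\,\epsilon(I)\,1=0$, and therefore
\[
I\cdot F_1 \;=\; I\cdot(k1)\;+\;I\cdot U \;=\; I\cdot U .
\]
Consequently $I\cdot F_1=0$ if and only if $I\cdot U=0$. Since inner-faithfulness of an $H$-module $M$ means that $0$ is the only Hopf ideal $I$ with $I\cdot M=0$, this equivalence shows that the $H$-module $k1+U$ is inner-faithful precisely when the $H$-module $U$ is inner-faithful.

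\textbf{Remarks on the difficulties.} There is no serious obstacle here; the only thing requiring care in (a) is verifying that passing from $U$ to the complement $U'$ does not disturb the standing hypotheses of Definition \ref{defxx1.1}, and this is immediate once one notes that the degree-one piece $k1+U$, and hence the entire filtration and its associated graded ring, is left unchanged. For (c), the single input needed beyond the proper decomposition $F_1=k1\oplus U$ is that a Hopf ideal is contained in the augmentation ideal, which is part of the definition of a Hopf ideal.
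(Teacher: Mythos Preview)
Your proof is correct and follows essentially the same approach as the paper: for (a) the paper also uses semisimplicity to split off an $H$-complement $U'$ of $k1$ inside $k1+U$ and replaces $U$ by $U'$, and for (c) the paper simply declares it ``straightforward,'' which is exactly the short computation you supplied using $\epsilon(I)=0$ for any Hopf ideal $I$.
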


\begin{proof} (a) Since $H$ is semisimple, $k1+U=k1\oplus U'$ which is
a direct sum of left $H$-modules $k1$ and $U'$ where $U'$ is a finite
dimensional generating subspace of $R$. Replacing $U$ by $U'$ gives
the assertion.

(b,c,d) These are straightforward.
\end{proof}

We also work with Hopf actions that are {\it not graded} as in the
following definitions.

\begin{definition}
\label{defxx1.6} Let $R$ be a filtered AS regular of dimension $d$
and let $H$ act on $R$.
\begin{enumerate}
\item
We say that {\it $R$ is not graded} if $R$ is not isomorphic to
$\gr_F R$ for any choice of $U$ in Definition~\ref{defxx1.1}.
\item
We say that the {\it $H$-action is not graded}, if for any choice of
$U$ in Definition~\ref{defxx1.3}, $R$ is not isomorphic to $\gr_F R$
as left $H$-module algebras.
\end{enumerate}
\end{definition}

Next, we recall the definition of the {\it homological determinant} of an
$H$-action on a graded algebra $A$.

\begin{definition}
\label{defxx1.7}\cite[Definitions 3.3 and 6.2]{KKZ} Let $A$ be a noetherian
connected graded AS Gorenstein algebra and let $H$ be a finite
dimensional Hopf algebra acting on $A$ that preserves the grading of
$A$.  Let ${\mathfrak e}$ denote the lowest degree nonzero
homogeneous component of the $d$-th local cohomology
$H^d_{{\mathfrak m}_{A}}(A)^{\ast}$, where $d =\injdim(A) < \infty$.
Then there is an algebra homomorphism $\eta: H \rightarrow k$ such
that
$${\mathfrak e}\cdot h =\eta(h){\mathfrak e}$$
for all $h \in H$.
\begin{enumerate}
\item[(1)] The composite map $\eta
\circ S: H \rightarrow k$ is called the {\it homological
determinant} of the $H$-action on $A$, denoted by $\hdet_H A$.
\item[(2)] We
say that $\hdet_H A$ is {\it trivial} if $\hdet_H A= \epsilon$, where
$\epsilon$ is the counit of $H$.
\end{enumerate}

Dually, if a finite dimensional Hopf algebra $K$ coacts on $A$ from
the right, then $K$ coacts on $k{\mathfrak e}$ and
$$\rho({\mathfrak e})={\mathfrak e}\otimes \sf{D}^{-1}$$
for some grouplike element $\sf{D}$ in $K$.
\begin{enumerate}
\item[(3)] The {\it homological
codeterminant} of the $K$-coaction on $A$ is defined to be
$\cohdet_K A =\sf{D}$.
\item[(4)] We say that $\cohdet_K $
is {\it trivial} if $\cohdet_K =1_K$.
\end{enumerate}
\end{definition}

\section{Initial analysis}
\label{xxsec2}

In this section, we compute the structure of the pair $(H, R)$ for
$R$ a filtered AS regular algebra of dimension 2 and $H$ a finite
dimensional Hopf algebra so that $H$ acts on $R$ under various
conditions. We do this particularly when the $H$-action on $R$ is
proper [Lemma \ref{lemxx2.7}, Remark \ref{rmk:HinSec2}, Corollary
\ref{corxx2.8}]. Moreover, we end this section by showing that if
the associated graded algebra of a certain filtered AS regular
algebra $R$ of dimension 2 is PI, then so is $R$ [Lemma \ref{lemxx2.9}].

\begin{notation}
\label{notxx2.1} Let $H$ be a Hopf algebra. Denote by $G:=G(H)$ the
set of grouplike elements in $H$, and let $kG$ be the corresponding
group algebra, which is a Hopf subalgebra of $H$. For $g\in G$,
denote by $\eta_g(h)$ the element $g^{-1}h g$ for any $h\in H$. For
a polynomial $p(t)=\sum_{s=0}^n a_s t^s$, denote by
$(p\circ\eta_g)(h)$ the element $\sum_{s=0}^n a_s \eta_g^s(h)$. Let
$\mathbb{U}_n$ be the set of primitive $n$-th roots of unity for
$n\geq 2$, and put $\mathbb{U}:=\bigcup_{n\geq 2} \mathbb{U}_n$.
\end{notation}

Consider the following preliminary results.

\begin{lemma} \label{lem:1dimlcoact} Let $H$ be a finite dimensional Hopf algebra and
$K:=H^{\circ}$.
If $T$ is a 1-dimensional right $K$-comodule, then $T \cong kg$ for
some grouplike element $g \in G(K)$.
\end{lemma}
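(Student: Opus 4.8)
The plan is to unwind the definitions and reduce the statement to the elementary fact that a one-dimensional comodule over a coalgebra is spanned by a grouplike element of that coalgebra. Let $T = kt$ be a one-dimensional right $K$-comodule with structure map $\rho\colon T \to T\otimes K$. Writing $\rho(t) = t\otimes a$ for a unique $a\in K$, the coassociativity axiom $(\rho\otimes\id)\rho = (\id\otimes\Delta)\rho$ gives $t\otimes \Delta(a) = t\otimes (a\otimes a)$, hence $\Delta_K(a) = a\otimes a$, and the counit axiom $(\id\otimes\epsilon)\rho = \id$ forces $\epsilon_K(a) = 1$, so $a\neq 0$ and $a$ is a grouplike element of the coalgebra $K$.

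The one remaining point is that a grouplike element of $K = H^\circ$ that comes from a comodule structure is automatically a \emph{unit} in the algebra $K$ (equivalently, lies in $G(K)$ as a group under multiplication), which is what is needed to later invert it. Here one uses finite-dimensionality of $H$: then $K = H^\circ = H^*$ is itself a finite-dimensional Hopf algebra, so every grouplike element $a$ has two-sided inverse $S_K(a)$, since $a\,S_K(a) = S_K(a)\,a = \epsilon_K(a)1 = 1$ by the antipode axiom applied to the grouplike $a$. Thus $a\in G(K)$, and setting $g := a$ we obtain $T = kt \cong kg$ as right $K$-comodules, where $kg$ carries the comodule structure $g\mapsto g\otimes g$ coming from $\Delta_K$.

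I do not anticipate a genuine obstacle here — the statement is essentially a restatement of the correspondence between one-dimensional comodules and grouplike elements, and the only thing to be careful about is invoking finite-dimensionality of $H$ (so that $H^\circ = H^*$ is a Hopf algebra with antipode) to guarantee invertibility of the resulting grouplike element. If one wanted to phrase it without the antipode, one could instead note that in a finite-dimensional coalgebra the set of grouplike elements is finite and that the map $x\mapsto ax$ on the (finite-dimensional) subcoalgebra generated by $a$ is injective hence bijective, again yielding an inverse; but the antipode argument is cleaner and is the one I would write.
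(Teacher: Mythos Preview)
Your proof is correct and follows essentially the same approach as the paper's: write $\rho(t) = t \otimes g$ and apply coassociativity to conclude $\Delta(g) = g \otimes g$. You are in fact slightly more careful than the paper in also invoking the counit axiom to ensure $g \neq 0$; your additional paragraph on invertibility via the antipode is correct but not required for the statement as written, since $G(K)$ simply denotes the set of grouplike elements.
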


\begin{proof} Take a nonzero basis element $t$ of $T$. Now
$\rho(t) = t \otimes g$, and by coassociativity,
$$t \otimes \Delta(g) = (1 \otimes \Delta) \circ \rho(t) =
(\rho \otimes 1) \circ \rho(t) = t \otimes g \otimes g.$$
Hence, $\Delta(g) = g \otimes g$.
\end{proof}

\begin{lemma} \label{lemxx2.2}
Let $H$ be a finite dimensional Hopf algebra, and $g, h \in G$.
Suppose that $f$ is a $(1,g)$-primitive element not in $kG$. Then, the following statements hold.
\begin{enumerate}
\item
We have that $g\neq 1$ and there is no nonzero primitive
element in $H$.
\item
If $\eta_h \left( f \right) - q f \in k G$ for some $q \in k$, then
$q \in \mathbb{U} \cup \left\{ 1 \right\}$.
\item
If $\eta_g \left( f \right) - q f \in k G$ for some $q \in k$, then
$q \in \mathbb{U}$.
\end{enumerate}
\end{lemma}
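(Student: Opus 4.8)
The plan is to exploit the interplay between the finite-dimensionality of $H$ (which forces $g$ to have finite order, and forces any eigenvalue-type scalar arising from conjugation by a grouplike element to be a root of unity) and the structure of the coproduct of the various elements involved. The three parts are logically ordered so that (a) feeds into (b) and (c).

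For part (a): Since $H$ is finite dimensional, by the Nichols--Zoeller theorem (or just because $kG \subseteq H$ is a Hopf subalgebra of a finite-dimensional Hopf algebra), $G = G(H)$ is a finite group, so every $g \in G$ has finite order $n$, say $g^n = 1$. If we had $g = 1$, then $f$ would be a genuine primitive element; but a nonzero primitive element in a finite-dimensional Hopf algebra would generate a sub-coalgebra whose dual is a truncated polynomial algebra, and over a field of characteristic zero (our standing assumption) primitives span a restricted/ordinary Lie algebra that must be zero in finite dimension — equivalently, $f^m$ would again be primitive for all $m$ and the powers $1, f, f^2, \dots$ would be linearly independent, contradicting $\dim H < \infty$. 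So $g \neq 1$, and the same argument shows there is no nonzero primitive element at all. (I should phrase this carefully: the key identity is that if $x$ is primitive then $\Delta(x^m) = \sum \binom{m}{i} x^i \otimes x^{m-i}$, which in characteristic zero forces the $x^m$ to be linearly independent unless $x$ is nilpotent in a way incompatible with being primitive; the cleanest route is to cite the standard fact that $P(H) = 0$ for finite-dimensional $H$ in characteristic $0$.)

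For parts (b) and (c), the main computation is to apply $\Delta$ to $\eta_h(f)$ and compare. Write $f' := \eta_h(f) = h^{-1} f h$. Since $h$ is grouplike, $\Delta$ is an algebra map, and $\Delta(h^{-1}) = h^{-1}\otimes h^{-1}$, we get
\[
\Delta(f') = (h^{-1}\otimes h^{-1})(1\otimes f + f\otimes g)(h\otimes h) = 1 \otimes f' + f' \otimes (h^{-1}gh).
\]
So $f'$ is $(1, h^{-1}gh)$-primitive. Now suppose $\eta_h(f) - qf = c \in kG$. Applying $\Delta$ to both sides:
\[
1\otimes f' + f'\otimes(h^{-1}gh) - q(1\otimes f + f\otimes g) = \Delta(c).
\]
Substituting $f' = qf + c$ and expanding, the left side becomes (after cancellation of the $1\otimes qf$ terms)
\[
1\otimes c + (qf+c)\otimes(h^{-1}gh) - qf\otimes g = \Delta(c),
\]
i.e. $qf \otimes (h^{-1}gh - g) = \Delta(c) - 1\otimes c - c\otimes(h^{-1}gh)$. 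The right-hand side lies in $kG\otimes kG$ plus terms not involving $f$ in the left tensor factor, while the left-hand side, if $f \notin kG$, lies in a complementary space — unless $h^{-1}gh = g$, in which case the whole thing collapses. So in case (b) we must have $h^{-1}gh = g$ (at least up to the $kG$-ambiguity), reducing (b) to (c). [I need to be a little careful here: the precise bookkeeping uses that $f \notin kG$ and that $1, f$ together with a basis of $kG$ remain linearly independent; this is where I'd pin down that $\Delta(c)-1\otimes c-c\otimes(\text{grouplike})$ has no component of the form $f\otimes(\text{something})$.]

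For part (c): now $\eta_g(f) - qf = c \in kG$ with $g$ of order $n \geq 2$. Apply $\eta_g$ repeatedly: $\eta_g^2(f) = \eta_g(qf + c) = q\eta_g(f) + \eta_g(c) = q^2 f + qc + \eta_g(c)$, and inductively $\eta_g^n(f) = q^n f + (\text{element of } kG)$. But $\eta_g^n = \mathrm{id}$ since $g^n = 1$, so $f = q^n f + (\text{element of } kG)$, whence $(1 - q^n) f \in kG$. Since $f \notin kG$, we must have $q^n = 1$, so $q$ is a root of unity. To upgrade this to $q \in \mathbb{U}$ (a \emph{primitive} root of unity, in particular $q \neq 1$): if $q = 1$ then $\eta_g(f) - f \in kG$, and I claim this is impossible. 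Indeed, compare coproducts once more: $\Delta(\eta_g(f) - f) = (1\otimes \eta_g(f) + \eta_g(f)\otimes g^{-1}gg) - (1\otimes f + f\otimes g)$; since $g$ is central-enough here ($g^{-1}gg = g$) this is $1\otimes(\eta_g(f)-f) + (\eta_g(f)-f)\otimes g$, so $\eta_g(f) - f$ is $(1,g)$-primitive and lies in $kG$ — but the only $(1,g)$-primitive elements of $kG$ are scalar multiples of $g - 1$, and $g - 1$ is \emph{not} of the form $\eta_g(f) - f$ unless... actually the cleanest finish: $\eta_g(f) - f \in kG$ being $(1,g)$-primitive forces $\eta_g(f) - f = \lambda(g-1)$ for some $\lambda \in k$; then set $\tilde f := f + \lambda\frac{?}{}$ — better, observe $\eta_g$ has finite order $n$ on the finite-dimensional space $kf \oplus kG$, and if $q=1$ the operator $\eta_g$ restricted to this space is unipotent (it fixes $kG$ pointwise up to the action on $G$ by conjugation, and sends $f \mapsto f + (g-1)\lambda$), but a finite-order operator in characteristic zero is semisimple, so unipotent $+$ finite order $\Rightarrow$ identity, giving $\eta_g(f) = f$, i.e. $\lambda = 0$; but then... hmm, that gives $\eta_g(f)=f$ which is consistent, not a contradiction. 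So the $q=1$ exclusion needs the hypothesis that $f \notin kG$ differently.

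Let me reconsider the $q = 1$ case: the point must be that $q \in \mathbb{U}$ is actually what gets \emph{used} later and the statement as given does assert $q \neq 1$. The exclusion of $q=1$: if $\eta_g(f) = f + c$ with $c \in kG$, I'd argue that then $H$ contains $kG$ and a genuine primitive-like element after a shift — specifically, since $c = \eta_g(f) - f = g^{-1}fg - f$, summing the geometric-type identity $\sum_{i=0}^{n-1}\eta_g^i(c) = \sum_{i=0}^{n-1}(\eta_g^{i+1}(f) - \eta_g^i(f)) = \eta_g^n(f) - f = 0$, so $\sum_{i=0}^{n-1}\eta_g^i(c) = 0$. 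Writing $c = \sum_{x\in G}\alpha_x x$, this says $\sum_i \sum_x \alpha_x\, g^{-i}xg^i = 0$. This is a genuine constraint but not obviously a contradiction. The honest main obstacle, then, is precisely this final step: ruling out $q = 1$ in part (c). I expect the intended argument is a short coproduct/linear-independence computation showing that if $q = 1$ and $f \notin kG$ then one can modify $f$ by an element of $kG$ to obtain a nonzero primitive element, contradicting part (a); I would look for the right correction term $f \mapsto f - (\text{suitable element of }kG)$ making $\eta_g$ act trivially and $\Delta$ as primitive. That modification step — solving $\eta_g(f - w) = f - w$ for $w \in kG$, which is possible iff $c \in \operatorname{im}(\eta_g - \mathrm{id})|_{kG}$, together with verifying the modified element is still $(1,g)$-primitive hence (since $g$-conjugation-invariant and $\dim H<\infty$) yields a contradiction with (a) or with $g\neq 1$ — is where the real work lies, and I would flag it as the crux while the root-of-unity conclusion $q^n = 1$ is immediate from iterating $\eta_g$.
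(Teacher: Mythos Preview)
Your argument for (a) is fine and matches the paper. For (b), your coproduct detour and the claimed ``reduction to (c)'' are misguided (knowing $h^{-1}gh=g$ does not put you in the setting of (c), which is about conjugation by $g$ itself), but you already have the correct proof sitting inside your treatment of (c): iterate $\eta_h$, use that $h$ has finite order $m$, and conclude $(1-q^m)f\in kG$, hence $q^m=1$. That is exactly the paper's proof of (b); just move it to the right place and drop the $\Delta$-computation.

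The genuine gap is the $q=1$ exclusion in (c). You correctly observe that $\eta_g(f)-f$ is $(1,g)$-primitive and lies in $kG$, hence equals $\alpha(1-g)$ for some $\alpha\in k$; and your finite-order/unipotent argument (or, as the paper does, simply iterating to get $\eta_g^m(f)=f+m\alpha(1-g)=f$) correctly yields $\alpha=0$, i.e.\ $gf=fg$. Where you get stuck is precisely the right place: this is \emph{not} yet a contradiction, and no shift of $f$ by an element of $kG$ will produce a nonzero primitive (such a shift keeps $f$ $(1,g)$-primitive with $g\neq 1$). The missing idea is structural rather than computational: once $g$ and $f$ commute, the Hopf subalgebra $S$ they generate is \emph{commutative}, hence (over an algebraically closed field of characteristic zero) cosemisimple and semisimple. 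But $S$ is also \emph{pointed}, being generated by a grouplike and a skew-primitive. A finite-dimensional pointed cosemisimple Hopf algebra coincides with its coradical, which is a group algebra; this forces $f\in kG(S)\subseteq kG$, contradicting the hypothesis. That one-line Hopf-theoretic fact is what closes the argument, and it is what you should supply in place of the attempted modification $f\mapsto f-w$.
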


\begin{proof}
(a)  Suppose that $g=1$. Then $f$ is a primitive element and the Hopf subalgebra generated by $f$ is infinite dimensional.  This yields a contradiction. Therefore, $g\neq 1$.

(b) First, by induction we have that $\eta_h^s \left( f \right) -
q^s f \in k G$ for every $s \geqslant  1$.
Now $h$ has finite order as $H$ is finite dimensional. So $f -
q^m f  = (1-q^m)f \in k G$ where $m$ is the order of $h$. Since $f
\not\in k G$, we obtain that $q^m = 1$.

(c) By part (b), it suffices to show that $q\neq 1$. Suppose that
$q=1$ and that $\eta_g(f) - f \in kG$. Note that $\eta_g(f) -f$ is also
$(1,g)$-primitive, so $\eta_g(f) - f = \alpha(1-g)$ for some $\alpha
\in k$. By induction, one sees that
$$\eta_g^i(f)=f+i\alpha (1-g)$$
for all $i\geqslant 1$. Let $m$ denote the order of $g$, then
$$f = \eta_g^m(f) =f+m\alpha (1-g).$$
Since $g \neq 1$ by part (a) and $m>0$, we have $\alpha =0$. This
implies $[g,f]=0$, so the sub-Hopf algebra $S$ generated by $f,g$ is
commutative. In particular, $S$ is cosemisimple. Since $\ch(k)=0$,
we have $S$ is also semisimple. Now $S$ is generated by a grouplike
element and a skew primitive element, so it is pointed. Therefore
$S$ is a group algebra which contradicts $f\not\in kG$. Hence,
$q\neq 1$.
\end{proof}

We impose the following hypotheses for the next several results.

\begin{hypothesis} \label{hypxx2.3}
Let $R$ be a filtered AS regular algebra of dimension 2, and let
$F=\{F_n\mid n\geqslant 0\}$ be the filtration of $R$ given in
Definition \ref{defxx1.1}. Let $H$ be a finite dimensional Hopf
algebra that acts on $R$ inner-faithfully such that the $H$-action
preserves the filtration of $R$. Moreover, we assume that the
$H$-action on $R$ is proper and not graded.
\end{hypothesis}

Now, we prove several preliminary results that we use throughout this paper.

\begin{lemma}
\label{lemxx2.4} Let $R$ and $H$ be as in Hypothesis \ref{hypxx2.3}.
Then
the relation $r$ of $R$ is of the form
$$vu-quv-\lambda u^2+au+bv+c$$
where $\{u,v\}$ is a suitable basis of $U$ and where $q\in
k^\times,$ $\lambda=0$ or $1$, and $a,b,c\in k$.
\end{lemma}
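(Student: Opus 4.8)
The plan is to start from a presentation of $R$ and use the hypotheses (proper, inner-faithful, not graded, filtration-preserving) to normalize the relation. Since $\gr_F R$ is AS regular of dimension $2$ generated in degree $1$, it is one of the two algebras listed in Section~\ref{xxsec1}: the Jordan plane $k_J[u,v]$ or a skew polynomial ring $k_q[u,v]$. In either case the defining relation of $\gr_F R$ has the form $vu - quv - \lambda u^2$ with $q \in k^\times$ and $\lambda \in \{0,1\}$ (with $\lambda = 1$ forcing $q = 1$, the Jordan case), after a suitable choice of the degree-$1$ generators $\bar u, \bar v$. Lifting this to $R$, the algebra $R$ is presented by $k\langle u, v\rangle$ modulo a single relation $r$ whose top-degree (degree-$2$) part is $vu - quv - \lambda u^2$; the lower-order terms lie in $F_1 = k1 \oplus U$, so $r = vu - quv - \lambda u^2 + au + bv + c$ for scalars $a, b, c$ after absorbing any degree-$1$ coefficient corrections into the choice of $u, v$. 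The only real content is to justify that the generators $u, v$ can be chosen to be a basis of a genuine $H$-submodule $U$ \emph{and} to simultaneously put $\gr_F R$ into the listed normal form — this is where properness of the $H$-action is essential, since it guarantees a choice of $U$ that is an $H$-module, and we then need to check that inside that $U$ we can still pick a basis achieving the normal form of $\gr_F R$.

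Concretely, the key steps in order: (1) Invoke Definition~\ref{defxx1.3}(c) to fix a generating subspace $U$, $\dim U = 2$, which is a left $H$-module; write $\bar U$ for its image in $\gr_F R$ in degree $1$, an $H$-submodule generating $\gr_F R$. (2) Classify $\gr_F R$: it is AS regular of global dimension $2$ and generated in degree $1$ by the two-dimensional space $\bar U$, hence isomorphic to $k_J[u,v]$ or $k_q[u,v]$; in particular there is a basis $\{u_0, v_0\}$ of $\bar U$ for which the single quadratic relation reads $v_0 u_0 - q u_0 v_0 - \lambda u_0^2 = 0$. (3) Lift $u_0, v_0$ to elements $u, v \in U$ (possible since $U \twoheadrightarrow \bar U$ and $\dim U = \dim\bar U = 2$, so this lift is an isomorphism of vector spaces). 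Then $\{u,v\}$ is a basis of $U$, hence of a generating $H$-module, and the degree-$2$ component of the relation of $R$ in these generators is exactly $v u - q u v - \lambda u^2$. (4) Since $R$ has Hilbert series matching $\gr_F R$ and the latter has a single quadratic relation, $R$ is presented by $k\langle u,v\rangle$ modulo a single relation $r$ with $\deg r = 2$; writing $r = (vu - quv - \lambda u^2) + (\text{terms in } F_1)$ and noting $F_1 = k \oplus ku \oplus kv$, we get $r = vu - quv - \lambda u^2 + au + bv + c$. (5) Finally, normalize: a shift $u \mapsto u$, $v \mapsto v + (\text{const})$, or rescaling, can be used to simplify, but since the statement allows arbitrary $a, b, c \in k$ no further normalization is needed beyond ensuring $q \ne 0$, which holds because $\gr_F R$ is a domain of the required Hilbert series (if $q = 0$ the associated graded algebra would not be AS regular of dimension $2$).

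The main obstacle is step~(3)–(4): one must be careful that the lift of the normal-form basis of $\bar U$ back to $U$ is compatible with $R$ actually being \emph{generated} by $U$ with the filtration $F_n = (k1 + U)^n$ — i.e., that no new relations or generators are needed in low degrees — and that the single lifted relation really is the \emph{only} relation. This follows from comparing Hilbert series of $R$ and $\gr_F R$ (they are equal, as always for filtered algebras with $\gr_F R$ the associated graded), together with the fact that $\gr_F R$, being AS regular of dimension $2$ generated in degree $1$, has global dimension $2$ and hence exactly one defining relation in degree $2$; a standard lifting argument then shows $R$ has a presentation with exactly one relation whose leading term is that of $\gr_F R$. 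I do not expect the properness hypothesis to play a role beyond guaranteeing the existence of the $H$-stable $U$; the genuine work is the (routine but care-requiring) normalization of the relation and the verification that $q \ne 0$.
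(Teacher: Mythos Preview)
Your proposal is correct and follows the same approach as the paper, but is vastly more detailed: the paper's entire proof is the single sentence ``The relation of $\gr_F R$ is of the form $vu-quv-\lambda u^2$, so the assertion follows.'' Your careful discussion of lifting the basis, checking that a single relation suffices via Hilbert series, and worrying about compatibility with the $H$-stable $U$ is all fine but is treated as routine background by the authors; in particular, properness plays no explicit role in their argument here (it enters later, in Lemmas~\ref{lemxx2.5}--\ref{lemxx2.7}).
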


\begin{proof}
The relation of $\gr_F R$ is of the form $vu-quv-\lambda
u^2$, so the assertion follows.
\end{proof}

\begin{lemma}
\label{lemxx2.5}  Let $R$ and $H$ be as in Hypothesis
\ref{hypxx2.3},  and $r$ be the relation of $R$. If $U$ is a simple
left $H$-module, then $a=b=0$.
\end{lemma}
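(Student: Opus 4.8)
The plan is to exploit that the $H$-action preserves the filtration, hence descends to the associated graded algebra $\gr_F R$, and that the relation $r = vu - quv - \lambda u^2 + au + bv + c$ of $R$ must be sent to a scalar multiple of itself (or, more precisely, the ideal it generates must be $H$-stable) because $R = k\langle u,v\rangle/(r)$ and the free algebra's relevant graded pieces are $H$-modules. First I would record the key structural fact: since the action is proper, $U = ku + kv$ is a left $H$-module, and since $U$ is assumed simple, $U$ carries an irreducible $2$-dimensional representation of $H$; in particular $k1 + U = F_1$ decomposes (as far as the action is concerned) with $k1$ the trivial submodule and $U$ the simple complement, and the quadratic part $F_2/F_1 \cong U\otimes U / (\text{relation})$ together with lower-order terms is controlled by $U$.

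The main step is to track where the relation goes. Work inside $F_2 = (k1 + U)^2 \subseteq R$. For $h \in H$, applying $h$ to the identity $vu - quv - \lambda u^2 + au + bv + c = 0$ in $R$, and using that $h\cdot$ is determined by the coproduct via the module-algebra axiom $h\cdot(xy) = \sum (h_1\cdot x)(h_2\cdot y)$, one gets another relation in $R$. The point is to compare the "linear part" $au + bv$: because $U$ is simple, the only way the collection of linear-plus-constant terms produced by the $H$-action can remain consistent with $R$ having Hilbert series that of the graded relation (equivalently, with $\gr_F R$ being AS regular of dimension $2$ with its single quadratic relation) is for the linear terms to be forced to vanish. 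Concretely, I would consider the $k$-linear map $U \to F_1$, $x \mapsto$ (the degree $\le 1$ correction forced on $x$ by the relation), show it is an $H$-module map using that the action preserves the filtration, and then observe its image lands in $k1$ which is the trivial module; a nonzero $H$-module map from the simple non-trivial module $U$ (note $U$ is $2$-dimensional, hence not the trivial $1$-dimensional module) to the trivial module $k1$ must be zero. This kills $a$ and $b$ simultaneously. The constant $c$ is not claimed to vanish, consistent with this argument only constraining the linear part.

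The step I expect to be the main obstacle is making precise the sense in which "the relation is an $H$-module element" — i.e., setting up the right ambient $H$-module (a suitable subquotient of $F_2$, or a piece of the free algebra $k\langle U\rangle$) in which the vector $(a,b) \in U^\ast$-data lives naturally and transforms as a submodule of $U$ or $U^\ast$. Once that is set up correctly, the simplicity of $U$ and Schur's lemma (over the algebraically closed field $k$) do the rest: there is no nonzero $H$-equivariant map between the $2$-dimensional simple $U$ and the $1$-dimensional trivial module $k1$, forcing $a = b = 0$. I would also need to double-check that the hypothesis "not graded" is not secretly needed here — it should not be, since this lemma only concerns the linear coefficients — and that Lemma \ref{lemxx2.4} has already put $r$ in the stated normal form with respect to a basis $\{u,v\}$ that is compatible with the $H$-module structure on $U$ (which it is, since properness lets us choose $U$ to be a module and Lemma \ref{lemxx2.4} chooses the basis inside that $U$).
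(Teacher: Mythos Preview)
Your proposal is correct in spirit and takes essentially the same approach as the paper: exploit that $kr$ is a one-dimensional $H$-module and that $U$ is simple of dimension two, so the ``linear part'' $au+bv$ must vanish by a Schur-type argument. You correctly identify the only real issue, namely choosing the right ambient $H$-module, and you even name the answer (the free algebra $k\langle U\rangle$).

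The paper's execution is simply more direct than the map $U\to F_1$ you sketch (which is not clearly defined as written). The paper views $r$ as an element of $U^{\otimes 2}\oplus U\oplus k\subset k\langle U\rangle$; since this is a direct sum of $H$-modules and $kr$ is one-dimensional, the $H$-equivariant projection $\pi:U^{\otimes 2}\oplus U\oplus k\to U$ sends $kr$ to an $H$-submodule of $U$ of dimension at most one, hence to zero. That immediately gives $\pi(r)=au+bv=0$. So rather than building a map \emph{out of} $U$, just project the one-dimensional module $kr$ \emph{into} $U$ and invoke simplicity there.
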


\begin{proof}
We can view the relation $r$ in Lemma \ref{lemxx2.4} as an
element of $U^{\otimes 2} \oplus U \oplus k \subset k \langle U
\rangle$. Since $k r$ is a 1-dimensional $H$-module, the image of $k
r$ under the projection map $ \pi: U^{\otimes 2} \oplus U \oplus k
\longrightarrow U$ is an $H$-module of dimension at most 1. Since
$U$ is a simple 2-dimensional $H$-module, we have  that $\pi \left(
r \right) = 0$. Hence, $a = b = 0$.
\end{proof}

If $(a,b) = (0,0)$, then the following result discusses the
structure of $R$ and the homological determinant of the $H$-action on $\gr_F R$.

\begin{lemma}
\label{lemxx2.6} Let $R$ and $H$ be as in Hypothesis \ref{hypxx2.3}.
Assume that $H \neq k$ and consider the relation $r$ of $R$ given by
Lemma \ref{lemxx2.4}. If $a=b=0$, then $c\neq 0$ and the
homological determinant of the $H$-action on $\gr_F R$ is trivial.
\end{lemma}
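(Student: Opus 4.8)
The plan is to analyze the filtered algebra $R$ with relation $r = vu - quv - \lambda u^2 + c$ (using $a=b=0$), and to argue first that $c \neq 0$, then that triviality of the homological determinant follows. For the first claim, suppose $c = 0$. Then the relation $r = vu - quv - \lambda u^2$ is homogeneous of degree $2$, so $R$ is already graded and isomorphic to $\gr_F R$ (taking the same generating space $U$). This directly contradicts the hypothesis that the $H$-action on $R$ is not graded (Hypothesis \ref{hypxx2.3}): if $r$ is homogeneous, the obvious map $R \to \gr_F R$ is an isomorphism of filtered algebras, and since the $H$-action is proper with $U$ an $H$-submodule, it is in fact an isomorphism of $H$-module algebras. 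Hence $c \neq 0$.

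For the homological determinant claim, the key observation is that $kr$ is a $1$-dimensional $H$-submodule of $k\langle U\rangle$ (since the $H$-action preserves the filtration and $U$ is an $H$-module, $H$ acts on the relation up to scalar). Write $h \cdot r = \chi(h) r$ for an algebra homomorphism $\chi : H \to k$. Now decompose $r$ according to the filtration degree: $r = r_2 + r_0$ where $r_2 = vu - quv - \lambda u^2 \in U^{\otimes 2}$ is the relation of $\gr_F R$ and $r_0 = c \in k$. Since the $H$-action preserves the filtration and $U \subset F_1$ is an $H$-submodule, both $U^{\otimes 2}$ and the scalar part $k$ are $H$-stable in the appropriate sense, so $h \cdot r_2 = \chi(h) r_2$ and $h \cdot c = \chi(h) c$ (the latter as elements of the graded pieces). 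But $H$ acts on $k = k1$ trivially via the counit $\epsilon$, and $c \neq 0$, so $\chi = \epsilon$. Therefore $H$ acts on the defining relation $r_2$ of $\gr_F R$ via $\epsilon$, i.e. trivially.

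Finally I would translate "$H$ acts on the relation of $\gr_F R$ trivially" into "the homological determinant of the $H$-action on $\gr_F R$ is trivial." For an AS regular algebra $A = k\langle U \rangle/(r_2)$ of global dimension $2$ with $\dim U = 2$, the minimal free resolution of $k$ has the form $0 \to A(-2)^{} \xrightarrow{} A(-1)^{2} \to A \to k \to 0$ (in the graded sense, with the last map given by $r_2$), and the homological determinant is computed from the $H$-action on the rank-one module $A(-\ell)$ in top homological degree — which here is governed precisely by the $H$-action on $kr_2$. Since that action is $\epsilon$, composing with $S$ gives $\epsilon \circ S = \epsilon$, so $\hdet_H \gr_F R = \epsilon$ is trivial; here I would cite the standard computation of $\hdet$ via the Nakayama-type automorphism / the action on the last syzygy (e.g. \cite{KKZ}, or the explicit formula for global dimension $2$).

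The main obstacle I anticipate is the last step: carefully justifying that the $H$-action on the top term $kr_2$ of the Koszul-type resolution of $\gr_F R$ is exactly what enters the definition of $\hdet$ via the lowest-degree component of local cohomology in Definition \ref{defxx1.7}. This is standard for graded AS regular algebras of dimension $2$ (the local cohomology $H^2_{\mathfrak m}(A)^\ast$ in lowest degree is, up to the relevant shift, identified with $kr_2$ as an $H$-module), but it requires quoting the precise dictionary from \cite{KKZ} rather than re-deriving it; I would phrase this part by direct appeal to that reference.
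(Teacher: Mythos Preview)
Your proposal is correct and follows essentially the same route as the paper: both argue $c\neq 0$ from the ``not graded'' hypothesis, observe that $kr$ is a $1$-dimensional $H$-module with character forced to equal $\epsilon$ by comparing with the action on the constant term $c$, and then translate ``$H$ acts trivially on $r_2$'' into triviality of $\hdet_H \gr_F R$. The only difference is in the final citation: the paper invokes \cite[Theorem~2.1]{CKWZ} directly for that translation, whereas you sketch the Koszul-resolution interpretation and point to \cite{KKZ}; the former is cleaner and avoids the obstacle you flagged.
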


\begin{proof}
Since $H$-action is not graded, if $a = b = 0$, then $c \neq 0$.
Since $k r$ is a $1$-dimensional $H$-module, for each $h \in H$, the
equation
$$ h \cdot \left( v u - q u v - \lambda u^2 + c \right)  =  \phi \left( h
  \right) \left( v u - q u v - \lambda u^2 + c \right)$$
 defines an algebra map $\phi : H \longrightarrow k$.
Since $h \cdot c = \epsilon \left( h \right) c$, we see that $\phi =
\epsilon$. Hence
$$ h \cdot \left( v u - q u v - \lambda u^2 \right)  =  \epsilon \left( h
  \right) \left( v u - q u v - \lambda u^2 \right).$$
By \cite[Theorem 2.1]{CKWZ}, the homological determinant of the
$H$-action on $\gr_F R$ is now trivial as desired.
\end{proof}

On the other hand, if $(a,b) \neq (0,0)$, then we have the following result.

\begin{lemma} \label{lemxx2.7}
Let $R$ and $H$ be as in Hypothesis \ref{hypxx2.3} with $H \neq k$.
Consider the relation $r$ of $R$ as in Lemma \ref{lemxx2.4}. If
$(a,b)\neq (0,0)$ (for any choice of basis $\{u,v\}$), then we have the following statements.
\begin{enumerate}
\item
$U$ is a direct sum of two 1-dimensional $H$-modules: $U \cong T_1
\oplus T_2$.
\item
Here, $T_1 \cong k$ and $T_2 \not \cong k$, or $T_1 \not \cong k$
and $T_2 \cong k$.
\item
$H \cong kC_m$, a cyclic group algebra for $m \geqslant 2$.
\item
The relation $r$ of $R$ is of the form $vu-uv -v$ up to a change of
basis.
\item
The homological determinant of the $H$-action on the associated graded
ring $\gr_F R$ is
non-trivial.  (Equivalently, if $\hdet_H \gr_F R= \epsilon$, then
$(a,b) = (0,0).$)
\end{enumerate}
\end{lemma}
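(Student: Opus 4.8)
The plan is to analyze the $H$-module structure of the relation $r = vu - quv - \lambda u^2 + au + bv + c$ directly, exploiting that $kr$ is a $1$-dimensional $H$-module. The first step is to establish part (a). Since the $H$-action is proper, $U$ is a $2$-dimensional $H$-module; if it were simple, Lemma \ref{lemxx2.5} would force $a = b = 0$, contradicting our hypothesis. So $U$ is not simple, and hence contains a $1$-dimensional submodule $T_1 = kw$. To get a complement, I would use the projection $\pi : U^{\otimes 2} \oplus U \oplus k \to U$ as in Lemma \ref{lemxx2.5}: the image $\pi(kr) = k(au + bv)$ is a nonzero $1$-dimensional $H$-submodule of $U$ (nonzero because $(a,b) \neq (0,0)$). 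If $k(au+bv) \neq T_1$ we are done by dimension count; if they coincide, I would instead pick a basis so that $au + bv$ spans $T_1$ and argue that properness plus the relation's structure forces a second $1$-dimensional summand — more cleanly, note $U$ is a module over the finite-dimensional $H$, and once we know $U$ is not simple and is not uniserial we get the splitting; uniseriality would have to be ruled out by looking again at how $\pi(kr)$ and the quadratic part of $r$ transform. This case analysis is the first place care is needed.

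Next, for (b): write $U \cong T_1 \oplus T_2$ with $T_i = k x_i$ one-dimensional, say $h \cdot x_i = \chi_i(h) x_i$ for algebra characters $\chi_i : H \to k$. Then $h$ acts on the weight components of $r$: the quadratic part $vu - quv - \lambda u^2$, rewritten in the $x_i$ basis, is a sum of monomials $x_i x_j$ on which $h$ acts by $\chi_i \chi_j$; the linear part lies in $T_1 \oplus T_2$; the constant $c$ is fixed by $\epsilon$. Since $kr$ is $1$-dimensional under the action, all nonzero components of $r$ must transform by the \emph{same} character $\phi$. Comparing the constant term (if $c \neq 0$, forces $\phi = \epsilon$) and the linear term (forces $\chi_i = \epsilon$ for whichever of $a,b$ is nonzero) pins things down; in particular at least one $\chi_i = \epsilon$, i.e. $T_1 \cong k$ or $T_2 \cong k$. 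That both cannot be trivial (which would give (b) its exclusivity) follows because if $\chi_1 = \chi_2 = \epsilon$ then $H$ acts trivially on $U$, contradicting inner-faithfulness via Lemma \ref{lemxx1.5}(b,c). So exactly one is trivial.

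For (c): say $T_1 \cong k$ (the other case is symmetric), so $H$ acts on $U = kx_1 \oplus kx_2$ through the character $\chi_2 =: \chi$, and by Lemma \ref{lemxx1.5}(c) inner-faithfulness of $U$ means $\chi : H \to k$ is an inner-faithful $1$-dimensional module. An algebra map $H \to k$ with inner-faithful kernel forces $H = H/\ker$ to embed... more precisely, the image of $H$ in $\operatorname{End}(U) \cong $ a commutative algebra (upper-triangular with equal-or-one diagonal entries) must be a Hopf algebra quotient that is inner-faithful, hence $H$ itself is commutative; being commutative and finite-dimensional over an algebraically closed field of characteristic $0$, and cosemisimple... actually the cleanest route: the characters of $H$ form the group $G(H^\circ)$, and inner-faithfulness of the single character $\chi$ (together with $\epsilon$) forces $H^\circ$ to be generated as a Hopf algebra by the grouplike $\chi$, so $H^\circ = k[\langle \chi \rangle]$ is a commutative group algebra, whence $H \cong (k C_m)^* \cong k C_m$ for $m$ the order of $\chi$, with $m \geq 2$ since $\chi \neq \epsilon$. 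For (d): with $H = kC_m$ acting by $x_1 \mapsto x_1$, $x_2 \mapsto \zeta x_2$ ($\zeta$ a primitive $m$-th root of unity), re-examine $r$. Matching weights in the quadratic/linear/constant parts of $r$ kills most terms and, after rescaling $x_1, x_2$, the surviving relation must be $x_2 x_1 - x_1 x_2 - x_2$ (one checks $q = 1$, $\lambda = 0$, the constant vanishes, and the linear term is forced into the nontrivial weight space, then normalized); this is the relation of the Weyl-like algebra in the form stated. Finally (e) is essentially a restatement combined with Lemma \ref{lemxx2.6}: that lemma shows $a = b = 0 \implies \hdet$ trivial; contrapositive gives $\hdet$ nontrivial $\implies (a,b) \neq (0,0)$, and conversely in the current situation $(a,b) \neq (0,0)$ we must show $\hdet$ is actually nontrivial — compute it directly from the normalized relation $x_2 x_1 - x_1 x_2 - x_2$ whose associated graded relation $x_2 x_1 - x_1 x_2$ transforms by the character $\chi_1 \chi_2 = \chi \neq \epsilon$, so by \cite[Theorem 2.1]{CKWZ} $\hdet_H \gr_F R$ equals (up to $S$, which is trivial on grouplikes' characters here) this nontrivial character.

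The main obstacle I anticipate is part (a) — specifically ruling out the uniserial (indecomposable non-simple) case for the $2$-dimensional module $U$. Once the splitting $U = T_1 \oplus T_2$ is in hand, everything else is bookkeeping with characters and weight-space matching in the expression for $r$; but the splitting itself requires carefully combining properness, the constraint from $\pi(kr)$ being a nonzero $1$-dimensional submodule, and possibly semisimplicity input (if $H$ turns out semisimple early, one is done immediately by Lemma \ref{lemxx1.5}(a), but $H$ need not be semisimple a priori here, so this has to be handled by the relation-structure argument rather than by a Maschke-type statement).
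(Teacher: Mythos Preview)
Your overall strategy for parts (b)--(e) is essentially what the paper does, modulo the paper's working throughout with the coaction of $K = H^\circ$ rather than the action of $H$; once $U$ splits, weight-matching in $r$ is indeed the right tool. Two minor points: in (d) you assert ``one checks $q=1$'' without saying why --- the mechanism is precisely the not-graded hypothesis: if $q\neq 1$ the substitution $u' = u + b(1-q)^{-1}$ makes the relation homogeneous and the action graded, a contradiction. In (e) your sentence about contrapositives is garbled (Lemma~\ref{lemxx2.6} gives $a=b=0 \Rightarrow$ trivial hdet, whose contrapositive is the wrong direction for what you want), but your subsequent direct computation of the character on the graded relation is correct and is what the paper does.

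The genuine gap is in (a), and you have correctly located it: ruling out the uniserial case. Your proposed fix via $\pi(kr) = k(au+bv)$ does not close it, because nothing prevents $k(au+bv)$ from coinciding with the unique $1$-dimensional submodule $T_1$ of a uniserial $U$; in that case you have only rediscovered $T_1$ and produced no complement. The paper's solution is to pass to the dual coaction $\rho: U \to U \otimes K$. In the indecomposable case one can choose $u \in T_1$ so that $\rho(u) = u \otimes g_1$ and $\rho(v) = v \otimes g_2 + u \otimes h$ with $g_1, g_2$ grouplike and $h$ a $(g_1,g_2)$-primitive; indecomposability is encoded by $h \notin kG(K)$. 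Expanding $\rho(r) = r \otimes g$ and comparing coefficients yields relations among $g_1, g_2, h, g$, and a short case analysis shows that either $h \in kG(K)$ (so $K$ is a group algebra, hence cosemisimple, contradicting indecomposability of the comodule $U$), or one arrives at $\eta_{g_1}(h) - qh \in kG(K)$ and invokes Lemma~\ref{lemxx2.2}(c) to force $q \in \mathbb{U}$, after which a change of basis makes the action graded, again a contradiction. This passage to the coaction, making the off-diagonal data an honest skew-primitive element of $K$ to which Lemma~\ref{lemxx2.2} applies, is the missing idea; in the $H$-action picture the off-diagonal data is merely a linear functional on $H$, and there is no evident substitute for Lemma~\ref{lemxx2.2}.
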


\begin{proof}
(a) We work with the coaction of $K:=H^{\circ}$ instead of the action of
$H$. Suppose to the contrary that $U$ is indecomposable. We will show
that in this case the $K$-coaction $\rho: R \rightarrow R \otimes K$
on $R$ is graded, thus producing a contradiction.

Since $(a,b)\neq (0,0)$, by Lemma \ref{lemxx2.5}, the $K$-comodule
$U$ is not simple. So there is a non-split exact sequence of $K$-comodules
$$0\to T_1\to U\to T_2\to 0$$
where $T_1$ and $T_2$ are 1-dimensional.
Choose a basis $\{u,v\}$ of $U$ such that $u\in T_1$ with $v \in U \setminus T_1$,
then
\begin{equation}\label{E2.7.1}\tag{E2.7.1}
\begin{cases}
\rho(u)=u\otimes g_1, &\\
\rho(v)=v\otimes g_2+u\otimes h&
\end{cases}\end{equation}
We claim that $g_1$, $g_2$ are grouplike and $h$
is $(g_1,g_2)$-primitive. Applying the coassociativity equation
$(\rho \otimes 1)\rho = (1 \otimes \Delta) \rho$ to \eqref{E2.7.1} gives
\begin{eqnarray*}
u \otimes \Delta(g_1) &=&  u \otimes g_1 \otimes g_1 \\
v \otimes \Delta \left( g_2 \right) + u \otimes \Delta \left( h \right) &=& v \otimes g_2 \otimes g_2 + u \otimes h
\otimes g_2 + u \otimes g_1 \otimes h.
\end{eqnarray*}
The claim then follows by comparing coefficients.

With respect to this basis $\{u,v\}$ of $U$, write the relation $r$ of $R$ as
$$r=a_{11}u^2+a_{12}uv+a_{21}vu+a_{22}v^2+au+bv+c.$$
for some scalars $a_{ij}$, $a,b,c \in k$. A simple calculation
shows that
\begin{eqnarray*}
  \rho \left( r \right) & = & u^2 \otimes ( a_{11} g_1^2 + a_{12} g_1 h +
  a_{21} h g_1 + a_{22} h^2) + v^2 \otimes a_{22} g_2^2\\
  &  & + u v \otimes ( a_{12} g_1 g_2 + a_{22} h g_2) + v u
  \otimes ( a_{21} g_2 g_1 + a_{22} g_2 h)\\
  &  & + u \otimes ( a g_1 + b h) + v \otimes b g_2 + 1 \otimes c.
\end{eqnarray*}
Since $\rho \left( r \right) = r \otimes g$ for some grouplike element $g$ by Lemma~\ref{lem:1dimlcoact},
we
have the following equations
\begin{align}
  a_{11} g & \quad=\quad a_{11} g_1^2 + a_{12} g_1 h
    + a_{21} h g_1 + a_{22} h^2,
  \label{E2.7.2}\tag{E2.7.2}\\
  a_{12} g & \quad=\quad a_{12} g_1 g_2 + a_{22} h g_2,
  \label{E2.7.3}\tag{E2.7.3}\\
  a_{21} g & \quad=\quad a_{21} g_2 g_1 + a_{22} g_2 h,
  \label{E2.7.4}\tag{E2.7.4}\\
  a_{22} g & \quad=\quad a_{22} g_2^2,
  \label{E2.7.5}\tag{E2.7.5}\\
  a g & \quad=\quad a g_1 + b h,  \label{E2.7.6}\tag{E2.7.6}\\
  b g & \quad=\quad b g_2, \label{E2.7.7}\tag{E2.7.7}\\
  c g & \quad=\quad c  \label{E2.7.8}\tag{E2.7.8}
\end{align}
If $b \neq 0$, then by (\ref{E2.7.6}), we have $h = a b^{- 1}(g -
g_1) \in k G$, so $K \cong k G$, a contradiction. If $a_{22} \neq
0$, then by (\ref{E2.7.3}) we have that  $h = a_{22}^{-
1} ( a_{12} g g_2^{-1} - a_{12} g_1) \in k G$, which again is a
contradiction. Hence we have $b = a_{22} = 0$. By hypothesis $a \neq
0$, so by (\ref{E2.7.6}) we have $g = g_1$.

If $a_{12} = 0$ or $a_{21} = 0$, then $\gr_F R$ fails to be a domain.
Therefore $a_{12} a_{21} \neq 0$, hence by (\ref{E2.7.3}),
we have that $g = g_1 g_2$. Since $g = g_1$, we conclude $g_2 = 1$.

Now if $c \neq 0$, then by (\ref{E2.7.8}) we have $g = 1$, so $g_1 =
1$. Thus $h$ is a primitive element, which contradicts finite
dimensionality of $K$ [Lemma \ref{lemxx2.2}(a)]. Hence $c = 0$.

Since $a_{21} \neq 0$, without loss of generality, we can take
$a_{21}=1$. Moreover, write $r = vu-quv-\lambda u^2 +au$ with
$a_{11} = -\lambda$ and $a_{12}=-q$. Now (\ref{E2.7.2}) yields
$-\lambda g_1 = -\lambda g_1^2 - qg_1 h + hg_1$, so
\begin{eqnarray*}
  \eta_{g_1} \left( h \right) - q h  =  \lambda \left( g_1 - 1 \right) \in k G.
\end{eqnarray*}

Since $h$ is $(g_1,1)$-primitive, by Lemma \ref{lemxx2.2}(c), we have that $q\neq 1$ or $h \in kG$. In the latter case, $K \cong kG$, which yields a contradiction, so $U$ is decomposable. On the other hand, if $q\neq 1$, let $v'= v+a(1-q)^{-1}$. Then
$r=v'u-quv'-\lambda u^2$.
Since $g_2=1$, we have that $\rho(v')=v'\otimes 1+u\otimes h$.
Thus, the $K$-coaction on $R$ is graded when using the basis $\{u,v'\}$.
This contradicts the hypothesis, so again $U$ is decomposable.

(b,c) By part (a), we have an isomorphism $U
\cong k u \oplus k v$ of $K$-comodules. Thus $\rho (u) = u \otimes
g_1$, $\rho (v) = v \otimes g_2$, and by the argument after
\eqref{E2.7.1}, the elements $g_1, g_2$ are grouplike. Note that
\eqref{E2.7.2}-\eqref{E2.7.8} hold and $h=0$ in this case. Since $(a, b)\neq (0,0)$
we can assume, by symmetry, that $a \neq 0$. Then, $g = g_1$ by \eqref{E2.7.6}. Since $\gr_F R$ is a domain,
either $a_{11}$, or $a_{12}$, or $a_{21}$ is nonzero. So one of the equations
\eqref{E2.7.2}- \eqref{E2.7.4} implies that $g_1=1$ or $g_2=1$.
Since $K$-coaction on $R$ is inner-faithful, we see that $K$ is generated by a single grouplike element. So $K
\simeq k C_m$ for some $m\geq 2$.

 (d,e) Since $a, b$ are not both zero, we may assume $b\neq 0$ by
symmetry, which implies that $g=g_2$ by \eqref{E2.7.7}. Recall that $h=0$ in this case. By part (b),
we have either $g_2 = 1$ or  $g_1 = 1$. So we have two cases to
consider:

Case 1: $g=g_2=1$ and $g_1\neq 1$. By \eqref{E2.7.2}-\eqref{E2.7.8},
we have that $a_{12}=a_{21}=a=0$. So we have that $r=a_{11}
u^2+a_{22} v^2+bv+c$. Replacing $v$ with a change of variables, we
can make $a=b=0$. This contradicts the hypothesis.

Case 2: $g=g_2\neq 1$ and $g_1=1$. By \eqref{E2.7.2}-\eqref{E2.7.8},
we have $a_{11}=a_{22}=a=c=0$. Up to scaling, $r=vu-quv+b'v$ for $b' \in k^{\times}$. If
$q\neq 1$, we may replace $u$ by $u':=u+b'(1-q)^{-1}$ so that the
relation becomes $vu'-qu'v$. Since $g_1=1$, then $K$ coacts on the new
relation and whence the $K$-coaction on $R$ is graded, yielding a
contradiction. Therefore $q=1$, and the assertion in (d) follows.

Since $g_1=1$, $\rho(r)=r\otimes g_2$. This means that $\cohdet_K \gr_F R=
g_2\neq 1$. Hence, part (e) follows.
\end{proof}

\begin{remark} \label{rmk:HinSec2}
Suppose $H$ acts on the filtered algebra $R=k\langle u,v\rangle/(r+s)$ where $r\in F_2$ and $s\in F_1$.
Then by Lemmas \ref{lemxx2.6} and \ref{lemxx2.7}(e), the induced $H$-action on $\gr_F R$ has trivial
homological determinant if and only if $s\in k$. In this case, an $H$-action on $\gr_F R$ lifts uniquely to an
$H$-action of $R$, so there is a bijective correspondence
between $H$-actions on $R$ and $H$-actions on $\gr_F R$ with trivial homological determinant.
\end{remark}

Moreover, as a result of Lemma \ref{lemxx2.7}, we can classify the relations of $R$ so that $H$ acts on $R$ under Hypothesis \ref{hypxx2.3}.

\begin{corollary} \label{corxx2.8}
The relation $r$ of $R$ under Hypothesis \ref{hypxx2.3} is in one of
the following forms:
\[
\begin{array}{llll}
vu-quv-1, &vu-uv-u^2-1, & \text{or} & vu-uv -v,
\end{array}
\]
for $q \in k^{\times}$, up to a change of basis of $U$.
\end{corollary}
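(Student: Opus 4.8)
The plan is to start from Lemma~\ref{lemxx2.4}, which already presents the relation as
$$r = vu - quv - \lambda u^2 + au + bv + c,\qquad q\in k^\times,\ \lambda\in\{0,1\},\ a,b,c\in k,$$
and then split into two cases according to whether or not a change of basis of $U$ can make $a$ and $b$ simultaneously zero. These two alternatives are exactly the hypotheses under which Lemmas~\ref{lemxx2.6} and~\ref{lemxx2.7} apply, so the structural work is already done and the corollary reduces to assembling the two cases and normalizing the remaining scalar.

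First I would handle the case where some basis $\{u,v\}$ of $U$ gives $a=b=0$, so $r = vu - quv - \lambda u^2 + c$. Since the $H$-action is not graded, $c\neq 0$ (otherwise $r$ is homogeneous and $R\cong\gr_F R$), which is precisely the opening observation in the proof of Lemma~\ref{lemxx2.6}. Then I would normalize $c$ to $-1$ by rescaling: if $\lambda=0$, replace $u$ by $tu$, so the relation becomes $vu-quv+t^{-1}c$, and take $t=-c$, obtaining $vu-quv-1$; if $\lambda=1$ (in which case one may take $q=1$, since for $q\neq1$ a shift $v\mapsto v+\tfrac{1}{1-q}u$ already removes the $u^2$ term and lands in the $\lambda=0$ case), replace $u$ and $v$ by $tu$ and $tv$, so that $vu-uv$ and $u^2$ both rescale by the common factor $t^2$, giving $vu-uv-u^2+t^{-2}c$, and take $t$ with $t^2=-c$, obtaining $vu-uv-u^2-1$. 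The square-root extraction is the only point where algebraic closedness of $k$ is genuinely used.

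In the complementary case, where $(a,b)\neq(0,0)$ for every choice of basis of $U$, I would simply invoke Lemma~\ref{lemxx2.7}(d): under precisely these hypotheses the relation becomes $vu-uv-v$ after a change of basis, so here $\gr_F R=k[u,v]$ rather than the Jordan plane. (In the proof of that lemma the branch $g_2=1,\ g_1\neq1$ is ruled out by the standing hypothesis, while the surviving branch $g_1=1$ gives this single form.) Collecting the three outcomes produces exactly the list in the statement, and the three forms are easily seen to be pairwise non-isomorphic, e.g.\ by comparing their associated graded algebras and, when those coincide, their two-sided ideal structure.

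One caveat is that Lemmas~\ref{lemxx2.6} and~\ref{lemxx2.7} are proved for $H\neq k$; the borderline case $H=k$, in which $R$ is merely a non-graded filtered AS regular algebra of dimension $2$, I would dispose of directly along the same lines, the only extra manoeuvre being that when $\lambda=1$ and $b\neq0$ for every basis one first uses the Jordan-preserving substitutions $u\mapsto\alpha u+\gamma$, $v\mapsto\delta u+\alpha v+\zeta$ to reach $vu-uv-u^2+v$, and then the filtration-changing substitution $u'=-u$, $v'=v-u^2$, which converts $vu-uv-u^2+v$ into $v'u'-u'v'-v'$. I do not expect a serious obstruction here: the corollary rides entirely on the analysis in Lemmas~\ref{lemxx2.6}--\ref{lemxx2.7}, and the only real computations are the elementary rescalings above — the most delicate being the Jordan-plane normalization and, in the $H=k$ case, recognizing that a Jordan-type deformation can be moved into the $vu-uv-v$ family by passing to a different generating set.
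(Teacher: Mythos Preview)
Your proposal is correct and follows essentially the same route as the paper: split on whether $(a,b)=(0,0)$ for some basis, invoke Lemma~\ref{lemxx2.6} in the first case and Lemma~\ref{lemxx2.7}(d) in the second. The paper's own proof is the single line ``Apply Lemma~\ref{lemxx2.6} and Lemma~\ref{lemxx2.7}(d,e)''; your explicit rescalings to normalize $c$ to $-1$ fill in detail the paper leaves to the reader.

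One small point on your $H=k$ caveat: the substitution $v'=v-u^2$ you propose there is not a change of basis of the two-dimensional space $U$---it replaces the generating set by one that sits in $F_2$, thereby changing the filtration. So while the algebras are isomorphic, this is not ``up to a change of basis of $U$'' in the sense of the corollary. In practice this boundary case is outside the intended scope (Lemmas~\ref{lemxx2.6}--\ref{lemxx2.7} assume $H\neq k$, and every application of the corollary in the paper has $H\neq k$), so you could simply note that $H\neq k$ is implicit rather than attempt to rescue that case.
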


\begin{proof}
Apply Lemma \ref{lemxx2.6} and Lemma \ref{lemxx2.7}(d,e).
\end{proof}

Now we turn our attention to PI algebras (algebras that satisfy a
polynomial identity). The following result provides a sufficient
condition for a filtered AS regular algebra of dimension 2 to be PI.

\begin{lemma}
\label{lemxx2.9} Let $R$ be a filtered AS regular algebra of
dimension 2 with relation
$r=a_{11}u^2+a_{12}uv+a_{21}vu+au+bv+c$ for $a_{ij}$,
$a,b,c \in k$. Suppose that $a_{12} + a_{21}\neq 0$ and $\gr_F R$
is PI. Then $R$ is PI. Equivalently, if $R$ is not PI, then either $\gr_F R$ is not PI or $\gr_F R$ is commutative.
\end{lemma}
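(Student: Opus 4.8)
The statement to prove is Lemma \ref{lemxx2.9}: if $R$ is a filtered AS regular algebra of dimension $2$ with relation $r = a_{11}u^2 + a_{12}uv + a_{21}vu + au + bv + c$, if $a_{12} + a_{21} \neq 0$, and if $\gr_F R$ is PI, then $R$ is PI.

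The plan is to argue via the associated graded ring and the classification of AS regular algebras of dimension $2$. First I would note that $\gr_F R$ is one of the two algebras from the list at the start of Section~\ref{xxsec1}: either the Jordan plane $k_J[u,v]$ or a skew polynomial ring $k_q[u,v]$. By a standard fact (e.g. \cite{AS} or direct computation), the Jordan plane is \emph{not} PI, and $k_q[u,v]$ is PI if and only if $q$ is a root of unity (in particular $k_1[u,v] = k[u,v]$ is PI and commutative). So the hypothesis that $\gr_F R$ is PI forces $\gr_F R = k_q[u,v]$ with $q$ a root of unity, say $q$ a primitive $\ell$-th root of unity (allowing $\ell = 1$). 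This already disposes of the ``equivalently'' reformulation once we show $R$ is PI in this case.

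Next, the condition $a_{12} + a_{21} \neq 0$ is exactly what guarantees we may normalize the quadratic part. Since the leading relation of $\gr_F R$ is $a_{11}u^2 + a_{12}uv + a_{21}vu$ and this must (up to scalar) be $vu - quv$ or $vu - uv - u^2$, the Jordan case would force $a_{12} = -a_{21}$ so $a_{12} + a_{21} = 0$ — hence the hypothesis $a_{12}+a_{21}\neq 0$ \emph{rules out the Jordan plane directly}, without even invoking PI-ness, and we land in $\gr_F R = k_q[u,v]$. Rescaling, we may take $a_{21} = 1$, $a_{12} = -q$, $a_{11} = 0$, so that $R = k\langle u,v\rangle/(vu - quv + au + bv + c)$. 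The key point is that $a_{12} + a_{21} = 1 - q \neq 0$ means $q \neq 1$; this lets us absorb the linear terms by a change of variables $u' = u + \beta$, $v' = v + \alpha$ with $\alpha,\beta$ chosen appropriately (solving $\alpha(1-q) = b$, etc.), reducing the relation to $v'u' - qu'v' + c'$ for a single scalar $c'$. Thus $R \cong k\langle u,v\rangle/(vu - quv + c')$, which is either a quantum plane ($c' = 0$) or (a twisted form of) the quantum Weyl algebra ($c' \neq 0$).

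Finally I would verify directly that $k\langle u,v\rangle/(vu - quv + c')$ is PI when $q$ is a primitive $\ell$-th root of unity. For $c' = 0$ this is classical. For $c' \neq 0$ one checks that $u^\ell$ and $v^\ell$ are central: from $vu = quv - c'$ one computes inductively $vu^\ell = q^\ell u^\ell v + (\text{lower, which telescopes to }0)$ using $1 + q + \cdots + q^{\ell-1} = 0$, and symmetrically for $v^\ell$; hence $R$ is finitely generated as a module over its center $k[u^\ell, v^\ell]$, so it satisfies a polynomial identity (e.g. by the Artin–Procesi / module-finite-over-center criterion, or simply because $R$ embeds in matrices over a commutative ring). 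The main obstacle is the bookkeeping in this last centrality computation — keeping track of the accumulated lower-order terms and showing the geometric sum vanishes precisely because $q^\ell = 1$ with $q$ primitive; this is the only place a genuine (though routine) calculation is needed, and it is also the place where the root-of-unity hypothesis on $\gr_F R$ is really used.
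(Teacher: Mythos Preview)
Your proof is correct and follows essentially the same line as the paper's: use $a_{12}+a_{21}\neq 0$ to get $q\neq 1$, perform affine changes of variable to reduce the relation to $vu-quv+c'$, then verify that $u^n$ and $v^n$ are central when $q$ is a primitive $n$-th root of unity so that $R$ is module-finite over its center and hence PI. One small imprecision worth flagging: the leading quadratic part $a_{11}u^2+a_{12}uv+a_{21}vu$ need not be \emph{literally} a scalar multiple of $vu-quv$ or $vu-uv-u^2$, and $a_{11}$ is not eliminated by ``rescaling'' alone; one must substitute $v\mapsto v-a_{11}(1-q)^{-1}u$ (available precisely because $q\neq 1$), exactly as the paper does, before proceeding to absorb the linear terms.
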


\begin{proof} Since $\gr_F R$ is a domain, we have $a_{12} a_{21} \neq 0$. So we may
assume $a_{21} = 1$ and $a_{12} = - q \neq 0$. By the first hypothesis, we
have that $q \neq 1$. By replacing $v$ by $v - a_{11} \left( 1 - q \right)^{- 1} u$
we can assume $a_{11} = 0$, so $r = v u - q u v + a u + b v + c$. Now by replacing
$u$ with $u - b \left( 1 - q \right)^{- 1}$ and $v$ with $v - a \left( 1 - q
\right)^{- 1}$, we can assume that $r = v u - q u v + c$. Since $\gr_F R$
is PI, we see that $q$ is an $n$-th root of unity for $n\geqslant 2$. Therefore $u^n$ and $v^n$ are central, so $R$ is module finite over its center. Hence, $R$ is PI.

Now if $R$ is not PI, we just showed that $\gr_F R$ is not PI or $a_{12} + a_{21}= 0$, with $a_{12},a_{21}\neq 0$. 
Thus if $\gr_F R$ is PI, it is isomorphic to $k\langle u,v\rangle/(r_{\lambda})$ where $r_{\lambda}=vu-uv+\lambda u^2$ 
for some $\lambda \in k$. For all $\lambda \neq 0$, the algebra $k\langle u,v\rangle/(r_{\lambda})$
is isomorphic to the Jordan plane, which is not PI. This shows that $\lambda =0$,
so $\gr_F R$ is commutative. 
\end{proof}

\section{Fixed subrings and the proof of Theorem \ref{thmxx0.4}}
\label{xxsec3}

In this section, we provide several results about the fixed ring
$R^H$ corresponding to a finite dimensional Hopf algebra $H$ acting on a filtered
AS-regular algebra $R$. In particular, we prove Theorem \ref{thmxx0.4} and
a weakened version of Theorem \ref{thmxx0.5} (see Proposition \ref{proxx3.3} and the material after). We end the section by computing examples of fixed rings of Hopf actions on PI algebras [Example \ref{exxx3.4}].

Given any algebra $A$ that is not necessarily filtered AS regular,
suppose that a Hopf algebra $H$ acts on $A$. Then the {\it fixed
subring} of the $H$-action is defined to be
$$A^H=\{a\in A\mid h\cdot a=\epsilon(h)a
\quad {\text{for all $h\in H$}}\}.$$  Now let $A$ be a filtered
algebra with a nonnegative exhaustive filtration $\{F_n
A\}_{n\geqslant 0}$. For any $x\in F_i A$, we use $\overline{x}$
(or $\overline{x}_i$) for the
corresponding image of $x$ in $A_i:=F_i A/F_{i-1} A$.
Suppose that $H$ acts on $A$ such that each $F_n A$ is
a left $H$-module. For each $h\in H$, and for each homogeneous element
$\overline{x}\in A_i$ where $x\in F_i A$ is any lift of
$\overline{x}$, define
$$h \cdot \overline{x}=\overline{(h\cdot x)}_i.$$
It is possible that $h\cdot x\in F_{i-1} A$, but we want to consider the
image of $h\cdot x$ in $A_i$. It is easy to check that $H$ acts on
$\gr_F A$ so that $\gr_F A$ is a left $H$-module algebra. We record this
without proof as part (a) of the following lemma.

\begin{lemma}
\label{lemxx3.1} Let $A$ be a filtered algebra with filtration
$\{F_n A\}_{n\geqslant 0}$. Suppose that $H$ acts on $A$ such that
each $F_n A$ is a left $H$-module. Then, the following statements hold.
\begin{enumerate}
\item
$H$ acts on $\gr_F A$ naturally.
\item
$A^H$ has an induced exhaustive filtration $F'$ such that $\gr_{F'} (A^H)$
is a subalgebra of $(\gr_F A)^H$.
\item
If $H$ is semisimple (whence finite dimensional), then
$\gr_{F'} (A^H)=(\gr_F A)^H$.
\end{enumerate}
\end{lemma}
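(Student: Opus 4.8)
The plan is to prove Lemma \ref{lemxx3.1} by spelling out the three pieces in order, with the genuinely substantive content concentrated in parts (b) and (c). Part (a) is already asserted to hold without proof, so I would only recall that the formula $h\cdot\overline{x}=\overline{(h\cdot x)}_i$ is well defined (independence of the lift of $\overline{x}$ follows since $H$ preserves each $F_nA$, so two lifts differing by an element of $F_{i-1}A$ give values in $A_i$ differing by $\overline{(\text{something in }F_{i-1}A)}_i=0$) and that compatibility with the algebra structure of $\gr_F A$ is a direct check on homogeneous elements using that each $h$ acts as a sum of ``measurings'' via the comultiplication.

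For part (b), I would set $F'_n(A^H):=F_nA\cap A^H$. This is clearly an exhaustive, nonnegative, ascending filtration of $A^H$, and it is multiplicative because $F'_m F'_n\subseteq F_mF_n\cap A^H\subseteq F_{m+n}\cap A^H=F'_{m+n}$. The associated graded is $\gr_{F'}(A^H)=\bigoplus_n (F_nA\cap A^H)/(F_{n-1}A\cap A^H)$, and the inclusion $F_nA\cap A^H\hookrightarrow F_nA$ induces a map $(F_nA\cap A^H)/(F_{n-1}A\cap A^H)\to A_n=F_nA/F_{n-1}A$; this map is injective precisely because $(F_nA\cap A^H)\cap F_{n-1}A=F_{n-1}A\cap A^H$. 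So $\gr_{F'}(A^H)$ embeds as a graded $k$-subspace of $\gr_F A$, and it is a subalgebra since it is the image of a filtered-algebra map. The remaining point is that this image lands inside $(\gr_F A)^H$: if $x\in F_nA\cap A^H$, then $h\cdot x=\epsilon(h)x$ in $A$, so $h\cdot\overline{x}_n=\overline{(h\cdot x)}_n=\overline{\epsilon(h)x}_n=\epsilon(h)\overline{x}_n$ in $A_n$, hence $\overline{x}_n\in(\gr_F A)^H$. Thus $\gr_{F'}(A^H)\subseteq(\gr_F A)^H$ as graded subalgebras.

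For part (c), I need the reverse inclusion $(\gr_F A)^H\subseteq\gr_{F'}(A^H)$ when $H$ is semisimple. Fix a homogeneous $\overline{x}_n\in(\gr_F A)^H\cap A_n$ and choose any lift $x\in F_nA$. Since $F_nA$ is an $H$-module and $H$ is semisimple, the integral of $H$ provides the Reynolds operator $\pi_n:F_nA\to(F_nA)^H=F_nA\cap A^H$, namely $\pi_n(y)=\int\cdot y$ for a normalized integral $\int\in H$; this is an $H$-module projection onto the fixed points, and it is compatible with the filtration in the sense that $\pi_n$ restricts to $\pi_{n-1}$ on $F_{n-1}A$ (because the integral is the same element of $H$ acting on the nested submodule). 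Now put $x':=\pi_n(x)\in F_nA\cap A^H$. I claim $\overline{x'}_n=\overline{x}_n$ in $A_n$: indeed $x-x'=x-\int\cdot x$, and modulo $F_{n-1}A$ we compute $\overline{x-x'}_n=\overline{x}_n-\int\cdot\overline{x}_n=\overline{x}_n-\epsilon(\int)\overline{x}_n=\overline{x}_n-\overline{x}_n=0$, using that $H$ acts on $A_n$ via part (a), that $\overline{x}_n$ is $H$-fixed, and that $\epsilon(\int)=1$ for a normalized integral in a semisimple Hopf algebra. Hence $\overline{x}_n=\overline{x'}_n$ with $x'\in F_nA\cap A^H$, so $\overline{x}_n\in\gr_{F'}(A^H)$, giving the desired equality.

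The main obstacle is the bookkeeping in part (c): making sure the Reynolds operator interacts correctly with the filtration so that applying it to a lift does not change the symbol in $A_n$, and correctly invoking that a semisimple Hopf algebra over a field of characteristic zero has a normalized (two-sided, $\epsilon(\int)=1$) integral. Everything else — well-definedness of the induced filtration, multiplicativity, and the embedding of associated gradeds — is routine diagram-chasing of the kind one does for ordinary filtered rings, now carried along with the $H$-action, which survives because $H$ preserves every $F_nA$ by hypothesis.
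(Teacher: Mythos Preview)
Your proposal is correct and follows essentially the same argument as the paper: the induced filtration $F'_n=F_nA\cap A^H$, the symbol computation $h\cdot\overline{x}=\epsilon(h)\overline{x}$ for (b), and the use of a normalized integral $\inth$ with $\epsilon(\inth)=1$ to replace a lift $x$ by $\inth\cdot x\in A^H$ without changing its symbol for (c). The only minor remark is that the existence of a normalized integral follows from semisimplicity alone (Maschke's theorem for Hopf algebras) and does not require the characteristic-zero hypothesis you mention in your closing paragraph.
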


\begin{proof} (b) Let $F'_n=A^H\cap F_n A$. Then $\{F'_n\}_{n\geq
0}$ is a nonnegative exhaustive filtration of the subalgebra $A^H$
of $A$. Clearly, $\gr_{F'} (A^H)$ is a subalgebra of $\gr_F A$. For
any nonzero homogeneous element $\overline{x} \in \gr_{F'} (A^H)$,
we pick a lift $x\in A^H$. By definition, $$h\cdot
\overline{x}=(\overline{h\cdot x})_i= (\overline{\epsilon(h) x})_i =
\epsilon(h)\overline{x}.$$ Hence $\overline{x} \in (\gr_F A)^H$. The
assertion follows.

(c) Consider the induced subfiltration $F'_n=A^H \cap F_n A$ of $A^H$.
By part (b), it suffices to show that $\gr_{F'} (A^H)\supseteq
(\gr_F A)^H$. Since $H$ is semisimple,  we may choose a left
integral element $\inth \in H$ with $\epsilon(\inth) =1$. Moreover, a left trace function $\tr: A
\rightarrow A^H$, defined by $a \mapsto \inth \cdot a$, is surjective as $H$ is semisimple. Hence, $A^H = \inth \cdot A$.
For every nonzero homogeneous element $\overline{x}\in (\gr_F
A)^H\subset \gr_F A$ of degree $i$,
$$\overline{x}=\inth \cdot \overline{x} =\overline{\inth \cdot x}.$$
This means that $x-\inth\cdot x$ in $F_{i-1} A$. So we may replace
$x$ by $\inth\cdot x$ and assume that $x\in A^H$. Therefore,
$\overline{x}$ is in $\gr_{F'} (A^H)$.
\end{proof}

Part (c) of the lemma above need not hold if $H$ is not semisimple;
we illustrate this in the following example. We also provide an
example of an inner-faithful $H$-action on an algebra $A$ so that
the induced action on $\gr_F A$ is not inner-faithful.

\begin{example}
\label{exxx3.2} Here, we do not assume that $k$ is of characteristic
zero. Let $H$ be Sweedler's 4-dimensional Hopf algebra $k\langle g,
f\rangle/(fg+gf, ~g^2-1, ~f^2)$. (See Example~\ref{exxx1.4} for the
coalgebra structure and antipode of $H$.) Let $A=k[u]$ and define a
left $H$-action on $A$ by
$$ f\cdot u= 1 \quad {\text{and}}\quad g\cdot u=-u.$$
It is easy to check that $A$ is a left $H$-module algebra.
The following statements are also easy to check.
\begin{enumerate}
\item
The $H$-action on $A$ is not proper.
\item
The $H$-action on $A$ is inner-faithful.
\item
By induction, we have that
$$f\cdot u^n=\begin{cases} 0,& n={\text{even}}\\ u^{n-1}, &n={\text{odd}}
\end{cases}$$
for all $n$. As a consequence, $A^H=k[u^2]\neq k[u]=A$.
\item
Let $F$ be the filtration defined by $F_n = (k + ku)^n$. Then $\gr_F A\cong k[u]$ with $\deg
u=1$.
Take $\overline{u}$ to be the image of $u \in F_1 A$ in $A_1$.
Hence, $f \cdot \overline{u} = (\overline{f \cdot u})_1 =
\overline{1}_1 =0$. Likewise for $g$, we see
that the $H$-action on $\gr_F A$ is determined by
$ f\cdot \overline{u}= 0$ and $g\cdot \overline{u}=-\overline{u}.$
As a consequence, the $H$-action on $\gr_F A$ is \underline{not}
inner-faithful.
\item
Let $F' =F \cap A^H$ be the induced filtration of $A^H$. If $\mathrm{char} \; k\neq 2$, then $(\gr_F A)^H=\gr_{F'} A^H$.
\item
If $\mathrm{char} \; k=2$, then the $H$-action on $\gr_F A$ is trivial.
So $(\gr_F A)^H=\gr_F A$. As a consequence,
$(\gr_F A)^H\not\cong \gr_{F'} A^H$.
\end{enumerate}
\end{example}

Next, we prove weakened versions of Theorems \ref{thmxx0.4} and
\ref{thmxx0.5}.

\begin{proposition}
\label{proxx3.3} Let $H$ be a semisimple Hopf algebra that acts on a
filtered AS regular algebra $R$ of dimension 2, inner-faithfully and
preserving filtration, with the $H$-action not graded. Note that the
$H$-action on $R$ is proper by Lemma \ref{lemxx1.5}(a). If the
homological determinant of the $H$-action of $\gr_F R$ is not
trivial, then $R^H$ has global dimension 2.
\end{proposition}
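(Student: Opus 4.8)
The plan is to use the structure theory of Section~\ref{xxsec2} to pin down both $R$ and the $H$-action exactly, and then to compute $R^H$ directly.

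First I would set up the reduction. As already noted, $H$ semisimple implies the action is proper, so Hypothesis~\ref{hypxx2.3} holds. Since the identity is the only algebra map $k\to k$, a nontrivial homological determinant forces $H\neq k$, so Lemmas~\ref{lemxx2.6} and~\ref{lemxx2.7} are available. The contrapositive of Lemma~\ref{lemxx2.6} shows that the coefficients $(a,b)$ in the relation $r=vu-quv-\lambda u^2+au+bv+c$ of Lemma~\ref{lemxx2.4} are not both zero; hence Lemma~\ref{lemxx2.7} applies, giving (after a change of basis of $U$) $R\cong k\langle u,v\rangle/(vu-uv-v)$ and $H\cong kC_m$ for some $m\geq 2$. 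Tracing through the proof of Lemma~\ref{lemxx2.7}(d,e) — the relevant case there has $g_1=1$ with $g_2$ generating $K=H^\circ$ — and dualizing that $K$-coaction, one reads off that a generator $g$ of $C_m$ acts by $g\cdot u=u$ and $g\cdot v=\zeta v$ for some $\zeta\in k^\times$; inner-faithfulness of the $H$-module $U=ku\oplus kv$ (Lemma~\ref{lemxx1.5}(c)) then forces $\zeta$ to be a primitive $m$-th root of unity.

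Next I would compute $R^H$. The algebra $R=k\langle u,v\rangle/(vu-uv-v)$ is the Ore extension $k[u][v;\tau]$ with $\tau(u)=u+1$, hence has $k$-basis $\{u^av^b:a,b\geq 0\}$, and the $H$-action, being by the algebra automorphism $g$ with $g\cdot u=u$ and $g\cdot v=\zeta v$, is diagonal on this basis: $g\cdot(u^av^b)=\zeta^b u^av^b$. Since $H=kC_m$ is semisimple, $R$ is the sum of its $H$-weight spaces, so $R^H=R^{C_m}$ is the $k$-span of $\{u^av^b:m\mid b\}$. Now put $w:=v^m$; iterating $vu=(u+1)v$ gives $v^mu=(u+m)v^m$, i.e.\ $wu=(u+m)w$, so the subalgebra of $R$ generated by $u$ and $w$ is the Ore extension $k[u][w;\tau^m]$, whose basis $\{u^aw^c\}=\{u^av^{mc}\}$ is precisely the basis of $R^H$ found above. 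Therefore $R^H=k[u][w;\tau^m]$, and rescaling $u\mapsto m^{-1}u$ yields an isomorphism $R^H\cong k[u][v;\tau]=R$. Consequently $\gldim R^H=\gldim R=2$; indeed $\gldim R\leq\gldim\gr_F R=2$ since $R$ is filtered AS regular of dimension $2$, with equality because $R$ is the enveloping algebra of the two-dimensional nonabelian Lie algebra.

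The one point demanding genuine care is the first step: extracting from the proof of Lemma~\ref{lemxx2.7} that $H$ acts trivially on one basis vector of $U$ and by a faithful character on the other. Granting this, the fixed-ring computation and the identification $R^H\cong R$ are routine, and I expect no further obstacle. (The same circle of ideas, applied instead to the relations $vu-quv-1$ and $vu-uv-u^2-1$ — where the fixed ring can drop to global dimension $1$ — is what the general statement of Theorem~\ref{thmxx0.5} will require, and there the direct computation is replaced by Galois-extension arguments.)
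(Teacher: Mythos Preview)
Your proposal is correct and follows essentially the same route as the paper: use Lemma~\ref{lemxx2.6} to force $(a,b)\neq(0,0)$, invoke Lemma~\ref{lemxx2.7} to pin down $R=k\langle u,v\rangle/(vu-uv-v)$ with $H=kC_m$ acting by $g\cdot u=u$, $g\cdot v=\zeta v$, and then compute $R^H$ directly as the Ore extension $k[u][v^m;\tau^m]$. Your additional observation that the rescaling $u\mapsto m^{-1}u$ gives $R^H\cong R$ is a nice touch the paper does not make explicit.
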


\begin{proof}
Since the homological determinant of the $H$-action on $\gr_F R$ is
not trivial, $(a,b)\neq (0,0)$ by Lemma \ref{lemxx2.6}. Now by Lemma
\ref{lemxx2.7}(c,d), the relation of $R$ is of the form $r=vu-uv-v$,
the Hopf algebra is $H =kC_m= k\langle h \rangle$, and the action of
$H$ on $R$ is given by
$$h\cdot u=u,\quad h\cdot v=\xi v$$
for some primitive $m$-th root of unity $\xi$. It
is easy to see that the fixed subring $R^H$ is
$$R^H = k\langle u, v^m\rangle/(v^m u-(u+m)v^m).$$
So $R^H$ has global dimension $2$, since it is isomorphic to the Ore extension
$k \left[ u \right] \left[ v^m, \sigma \right]$ where $\sigma \in \Aut
\left( k \left[ u \right] \right)$ is given by $\sigma \left( u \right) = u +
m$.
\end{proof}

Now we are ready to prove Theorem \ref{thmxx0.4}.

\begin{proof}[Proof of Theorem \ref{thmxx0.4}]
Since $H$ is semisimple, the $H$-action on $R$ is proper by Lemma
\ref{lemxx1.5}(a). If the homological determinant of the $H$-action
on $\gr_F R$ is not trivial, then the assertion follows from
Proposition \ref{proxx3.3}. If the homological determinant of the
$H$-action on $\gr_F R$ is trivial, then by \cite[Theorem 0.1]{KKZ},
$(\gr_F R)^H$ is AS Gorenstein. By  Lemma \ref{lemxx3.1}(c), $\gr_F
(R^H)\cong (\gr_F R)^H$, which is AS Gorenstein, and by definition,
$R^H$ is filtered AS Gorenstein.
\end{proof}

Furthermore, we compute two examples of fixed
subrings of Hopf actions on
PI algebras.

\begin{example}
\label{exxx3.4}
Let $R$ be the algebra $k\langle u,v\rangle/(u^2+v^2-1)$. We
will consider two different Hopf actions on $R$, and compute
the corresponding fixed subrings $R^H$. Note that $\gr_F R$ 
is AS regular, and the following actions of $H$ on $R$ are 
filtered AS regular actions.

(1) Let $H_8$ be the (unique) $8$-dimensional noncommutative and
noncocommutative semisimple Hopf algebra. It is generated as an
algebra by $x,y,z$ and subject to the relations:
$$\begin{aligned}
x^2=1,\quad  y^2=1,\quad  z^2&=\frac{1}{2}(1+x+y-xy),
&xz=zy, \quad yz=zx,\quad  xy&=yx.
\end{aligned}
$$
The coalgebra structure and antipode of $H_8$ is determined by
\begin{align*}
\triangle(x)=x\otimes x, \quad
\triangle(y)=y\otimes y, \quad
\triangle(z)=\frac{1}{2}(1\otimes 1+1\otimes x+y\otimes 1
-y\otimes x)z\otimes z,\\
\epsilon(x)=1, \quad \quad
\epsilon(y)=1, \quad \quad
\epsilon(z)=1, \quad \quad
S(x)= x, \quad \quad
S(y)= y, \quad \quad
S(z)=z.
\end{align*}
Consider the $H_8$-action on $R$ is is given by
\[
\begin{array}{lllll}
x\cdot u=-u,&& y\cdot u=u, &&z\cdot u=v,\\
x\cdot v=v,  && y\cdot v=-v, && z\cdot v=u.
\end{array}
\]
Denote
\begin{eqnarray*}
a=(uv)^2-(vu)^2, \hspace{.2in} b=u^4-u^2+\frac{1}{4}, \hspace{.2in}
c=\left(u^2-\frac{1}{2}\right) ((uv)^2+(vu)^2).
\end{eqnarray*}
It is not hard to check that the fixed subring
$R^{H_8}$ is a commutative ring which is isomorphic to
$k[a, b,c]/(c^2-b(a^2+4(b-\frac{1}{4})^2))$.

Note that $R^{H_8}$ is Gorenstein by Theorem \ref{thmxx0.4}. Moreover, $R^{H_8}$
 has isolated singularities at $(a,b,c)=(0,\frac{1}{4},0)$ and
$(\pm \sqrt{-1}/2,0,0)$. These are Kleinian singularities both of type A$_1$.

(2) Let $H=(k D_{2n})^\circ$ where $D_{2n}$ is the dihedral group of
order $2n$. Since the $H$-action is equivalent to the
$H^\circ$-coaction, we will  consider the $kD_{2n}$-coaction on $R$.

By definition, $D_{2n}=\langle x,y|x^2=y^2=(xy)^n=1\rangle$. Define a
comodule structure map $\rho: R\longrightarrow R\otimes kD_{2n}$ by
\begin{eqnarray*}
\rho(u)=u\otimes x  \quad \text{~~and~~} \quad \rho(v)=v\otimes y.
\end{eqnarray*}
By a simple calculation, the fixed subring $R^{H}=R^{co (kD_{2n})}$
is a commutative ring isomorphic to $k[a,b,c]/(bc-a^n(1-a)^n)$
generated by $a=u^2$, $b=(uv)^n$ and $c=(vu)^n$.
 By Theorem \ref{thmxx0.4}, $R^{H}$ is
Gorenstein. If $n\geqslant 2$, then $R^{H}$ has isolated
singularities at $(a,b,c)=(0,0,0)$ and $(1,0,0)$. These are also Kleinian singularities both of type A$_{n-1}$.
\end{example}

\section{The proof of Theorem \ref{thmxx0.3}}
\label{xxsec4}

This section is dedicated to the proof of Theorem \ref{thmxx0.3}.
First, we introduce some notation and we provide some preliminary results.

Let $F=\{F_n A\}_{n\geqslant 0}$ denote a filtration of $A$. The
{\it Rees ring} of $A$ with respect to $F$ is defined to be
$$\Rees_F A=\bigoplus_{n \geqslant 0} (F_n A) t^n.$$
We begin by analyzing the Rees ring of the $n$-th Weyl algebra $A_n(k)$
with respect to the standard filtration. Here, $A_n(k)$ is generated
by $u_1, \dots, u_n, v_1, \dots, v_n$ subject to relations
$$[u_i, u_j]=[v_i, v_j] = 0 \quad \text{~~and~~}
\quad [v_i, u_j] =\delta_{ij}.$$
Moreover, we refer to \cite[Definition 1.7]{CWZ} for the definition of
the Calabi-Yau property in terms of Nakayama automorphisms.

\begin{lemma}
\label{lemxx4.1}
Let $A_n(k)$ be the $n$-th Weyl algebra with the standard
filtration $F$. Then the following statements regarding
$B:=\Rees_F A_n(k)$ hold.
\begin{enumerate}
\item
$B$ is generated by $u_1, \dots, u_n, v_1, \dots, v_n, t$ subject to the
relations:
$$[u_i,u_j] = [v_i,v_j] = [u_i, t] = [v_i,t] = 0, \quad
\text{and} \quad [v_i,u_j]=\delta_{ij} t^2$$
for $1\leq i,j\leq n$. We call $\{u_1,\cdots, u_n, v_1,\cdots, v_n,t\}$
the standard basis of $B$.
\item
$B$ is a Koszul AS regular algebra of global dimension $2n+1$.
\item
$B$ is Calabi-Yau.
\item
Let $\{u^*_1,\cdots, u^*_n, v^*_1,\cdots, v^*_n,t^*\}$ be the
$k$-linear dual of the standard basis. If ${\text{char}} \; k=0$
(or if ${\text{char}} \; k>n$),
then in the Koszul dual $B^!$ of $B$, we have that $(t^*)^{2n+1}\neq 0$.
\item
If $f=t^*+\sum_{i=1}^n (a_i u^*_i+b_i v^*_i)$ is in $B^!$ for some
scalars $a_i, b_i \in k$, then $f^{2n+1}=(t^*)^{2n+1}$.
\end{enumerate}
\end{lemma}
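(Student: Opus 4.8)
The plan is to work in the Koszul dual algebra $B^!$ and exploit its explicit presentation, together with parts (b)--(d) of the lemma. Since $B$ is a quadratic (indeed Koszul) algebra on the $2n+1$ generators $u_1,\dots,u_n,v_1,\dots,v_n,t$, the Koszul dual $B^!$ is the quadratic algebra on the dual generators $u_1^*,\dots,u_n^*,v_1^*,\dots,v_n^*,t^*$ with relations the orthogonal complement of the span of the relations of $B$. First I would write down these relations explicitly: from $[u_i,t]=[v_i,t]=0$ and $[u_i,u_j]=[v_i,v_j]=0$ we get that in $B^!$ the elements $u_i^*,v_j^*,t^*$ \emph{anticommute} in the appropriate pattern, and from $[v_i,u_j]=\delta_{ij}t^2$ we get the key relations tying products $v_i^*u_i^*$ (or $u_i^*v_i^*$) to $(t^*)^2$. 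The upshot I expect is that $B^!$ is a finite-dimensional algebra in which $(t^*)^2$ is central (up to sign bookkeeping) and squares of the $u_i^*,v_i^*$ vanish or are multiples of $(t^*)^2$, and in which the top nonzero degree is $2n+1$ with $(t^*)^{2n+1}$ spanning it (this is part (d)).

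Next I would compute $f^{2n+1}$ where $f = t^* + \sum_{i=1}^n (a_i u_i^* + b_i v_i^*)$. Expanding the power $f^{2n+1}$ by the (noncommutative) multinomial theorem, every term is a product of $2n+1$ factors, each factor being either $t^*$ or one of the $a_iu_i^*$, $b_iv_i^*$. The central claim is that \emph{every} term other than $(t^*)^{2n+1}$ vanishes in $B^!$. The mechanism: because $B^!$ has dimension growing only polynomially and top degree $2n+1$, any monomial of degree $2n+1$ in the generators that is not (a scalar multiple of) $(t^*)^{2n+1}$ must already be zero; more concretely, a monomial involving some $u_i^*$ or $v_i^*$ can, using the quadratic relations, either be moved so that a square $u_i^*u_i^*$ or $v_i^*v_i^*$ appears (giving $0$, or a multiple of $(t^*)^2$ which reduces the "non-$t$" content), or a pair $v_i^*u_i^*$ can be replaced by $(t^*)^2$. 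Iterating, any term with at least one $u^*$ or $v^*$ factor either dies or gets rewritten with strictly fewer $u^*/v^*$ letters, and an easy parity/counting argument (there are only $n$ distinct "$i$" slots available but $2n+1$ factors) forces a repeated-index collision that kills it. Hence $f^{2n+1} = (t^*)^{2n+1}$, which is nonzero by part (d).

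The main obstacle I anticipate is the bookkeeping in the second step: keeping careful track of signs (the $B^!$ relations are anticommutation relations, so reordering factors introduces $\pm$ signs) and making the "every mixed monomial of top degree vanishes" claim fully rigorous rather than heuristic. The cleanest way to discharge it is probably to first establish a normal form for monomials in $B^!$ — e.g. show $B^!$ has a $k$-basis consisting of $(t^*)^j$ together with monomials $(t^*)^j u_{i_1}^*\cdots u_{i_p}^* v_{j_1}^*\cdots v_{j_q}^*$ with strictly increasing indices and $p+q$ small — from which both part (d) and the vanishing of all degree-$(2n+1)$ monomials except $(t^*)^{2n+1}$ fall out immediately. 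One then simply observes that in the multinomial expansion of $f^{2n+1}$, collecting all contributions, the coefficient of $(t^*)^{2n+1}$ is $1$ and all other basis elements receive coefficient $0$ because no basis monomial of degree $2n+1$ other than $(t^*)^{2n+1}$ exists. I would also double-check the characteristic hypothesis enters only through part (d) (so that $(t^*)^{2n+1}\neq 0$), and not through the combinatorics of the expansion, so that the identity $f^{2n+1}=(t^*)^{2n+1}$ holds as a formal identity in $B^!$ regardless.
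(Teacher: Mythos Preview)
Your outline contains a genuine gap in the second step. You correctly note that $(B^!)_{2n+1}$ is one-dimensional, so $f^{2n+1}=c\,(t^*)^{2n+1}$ for some scalar $c$; the issue is your argument that $c=1$. You assert that every mixed monomial of degree $2n+1$ ``dies'' via a repeated-index collision. This is false. For instance, with the relations the paper records (namely $(u_i^*)^2=(v_i^*)^2=0$, all of $u_i^*,v_j^*,t^*$ pairwise anticommuting, and $(t^*)^2=-\sum_i v_i^*u_i^*$), the monomial $u_1^*v_1^*(t^*)^{2n-1}$ has no repeated index and is a \emph{nonzero} multiple of $(t^*)^{2n+1}$. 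So individual cross-terms in the multinomial expansion contribute to the coefficient of $(t^*)^{2n+1}$, and you must show these extra contributions cancel---your argument does not address that. (Relatedly, you cannot replace a single $v_i^*u_i^*$ by $(t^*)^2$: the relation only gives $\sum_i v_i^*u_i^*=-(t^*)^2$.)

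The fix is short and is the ``direct computation'' the paper has in mind, visible from the relations listed in the proof of (c),(d) and used again in Lemma~\ref{lemxx4.3}. Write $g=\sum_i(a_iu_i^*+b_iv_i^*)$. Since $t^*$ anticommutes with each $u_i^*,v_i^*$, we have $t^*g+gt^*=0$; since the $u_i^*,v_j^*$ all anticommute and square to zero, $g^2=0$. Hence
\[
f^2=(t^*)^2+t^*g+gt^*+g^2=(t^*)^2,
\]
so $f^{2n}=(t^*)^{2n}$. Finally $f^{2n+1}=(t^*+g)(t^*)^{2n}=(t^*)^{2n+1}+g(t^*)^{2n}$, and $g(t^*)^{2n}=0$ because $(t^*)^{2n}$ is (up to scalar) $v_1^*u_1^*\cdots v_n^*u_n^*$ and left multiplication by any $u_i^*$ or $v_i^*$ annihilates it. This gives $f^{2n+1}=(t^*)^{2n+1}$ without any characteristic hypothesis, exactly as you suspected.
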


\begin{proof} (a,b) These are well-known.

(c,d) Note that the Koszul dual $B^!$ of $B$ is generated by the
$k$-linear dual of the standard basis,  subject to the relations
\[
\begin{array}{lll}
u^*_iu^*_j+u^*_ju^*_i=0,  &v^*_iv^*_j+v^*_jv^*_i=0, & u^*_i v^*_j+v^*_ju^*_i=0,
\quad (u^*_i)^2=0,
\quad (v^*_i)^2=0\\
u^*_i t^*+t^* u^*_i=0, &v^*_i t^*+t^* v^*_i=0,
 & (t^*)^2 + \displaystyle \sum_{i=1}^n v^*_iu^*_i=0,
\end{array}
\]
for all $1 \leq i,j \leq n$. Hence, $B^!$ is isomorphic to
the exterior algebra
$$\Lambda (u^*_1,\cdots, u^*_n, v^*_1, \cdots, v^*_n, t^*)$$
as a graded vector space. In particular,
$\mathfrak{e}:=v^*_1u^*_1 v^*_2u^*_2 \cdots v^*_n u^*_n t^*$ is a nonzero
element in the highest degree of $B^!$ (degree = $2n+1$). Using the relations
$(u^*_i)^2=(v^*_i)^2=0$ and $(t^*)^2 = - \sum_{i=1}^n v^*_iu^*_i$ for
all $i$, we have that $(t^{\ast})^{2n+1}=(-1)^n n! \mathfrak{e}\neq 0$.
Hence, part (d) holds.

It is easy to check that $a b=ba$ if $a\in B^!$ has degree 1 and
$b\in B^!$ has degree $2n$. Therefore, the Nakayama automorphism of
$B^!$ is the identity; refer to \cite[Section 3]{CWZ}. By
\cite[Theorem 6.3]{BM}, the Nakayama automorphism of $B$ is the
identity. Now $B$ is Calabi-Yau by definition.

(e) This follows by a direct computation.
\end{proof}

The following lemma is clear, so we omit the proof.

\begin{lemma}
\label{lemxx4.2}
Let $H$ be a finite dimensional Hopf algebra.
Let $A$ be a filtered algebra so that $H$ acts on $A$ and each $F_n A$
is a left $H$-module. Then the following statements hold.
\begin{enumerate}
\item
There is an induced $H$-action on $\Rees_F A$ such that $\Rees_F A$
is a left $H$-module algebra with each homogeneous component
of $\Rees_F A$ being a left $H$-module.
\item
The quotient map $\Rees_F A\to \Rees_F A/(t-1)=A$ is an $H$-module
algebra homomorphism.
\item
The quotient map $\Rees_F A\to \Rees_F A/(t)=\gr_F A$ is an $H$-module
algebra homomorphism.
\item
$(\Rees_F A)^H=\Rees_{F'} A^H$ where $F'$ be the induced filtration on
$A^H$. \qed
\end{enumerate}
\end{lemma}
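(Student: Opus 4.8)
The plan is to establish (a)--(d) in sequence, each one reducing to the corresponding fact about $A$ together with multiplicativity of the filtration $F$.

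For (a), I would define the action on the $n$-th homogeneous piece $(F_n A)t^n$ by $h\cdot(xt^n):=(h\cdot x)t^n$ for $x\in F_n A$. This is well-defined because $F_n A$ is an $H$-submodule of $A$, and it visibly makes each homogeneous component of $\Rees_F A$ a left $H$-module. To see that $\Rees_F A$ becomes a left $H$-module algebra it suffices to verify the axiom $h\cdot(\xi\zeta)=\sum(h_1\cdot\xi)(h_2\cdot\zeta)$ on homogeneous generators $\xi=xt^m$, $\zeta=yt^n$: one has $xy\in F_{m+n}A$ since $F$ is multiplicative, and $h\cdot(xy)=\sum(h_1\cdot x)(h_2\cdot y)$ because $A$ is an $H$-module algebra, so multiplying by $t^{m+n}$ gives the claim; moreover $h\cdot(1t^0)=\epsilon(h)1$ since $1\in F_0 A$ and $h\cdot 1=\epsilon(h)1$ in $A$.

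For (b) and (c), the observation to exploit is that the Rees variable $t$ (arising from $1\in F_1 A$) is $H$-invariant, i.e.\ $h\cdot t=\epsilon(h)t$, which is immediate from $h\cdot 1=\epsilon(h)1$. Hence $t-1$ and $t$ are $H$-invariant elements, and a two-sided ideal generated by $H$-invariant elements is automatically $H$-stable (apply the module algebra axiom together with the counit identity $\sum h_1\epsilon(h_2)=h$). Thus $(t-1)$ and $(t)$ are $H$-stable ideals; the quotient algebras $\Rees_F A/(t-1)\cong A$ and $\Rees_F A/(t)\cong\gr_F A$ therefore carry induced $H$-module algebra structures, and the quotient maps are $H$-module algebra homomorphisms. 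It remains to identify the induced actions: on $A$ the induced action is the original one, since $xt^n\mapsto x$; on $\gr_F A$ the induced action is the one of Lemma \ref{lemxx3.1}(a), since under $xt^n\mapsto\overline{x}_n$ both actions are given by ``act on a lift, then project.''

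For (d), I would use that the $H$-action on $\Rees_F A$ preserves the grading, so that $(\Rees_F A)^H=\bigoplus_{n\geqslant 0}\big((F_n A)t^n\big)^H=\bigoplus_{n\geqslant 0}(F_n A)^H t^n$; since $(F_n A)^H=A^H\cap F_n A=F'_n A^H$ by definition of the induced filtration $F'$ in Lemma \ref{lemxx3.1}(b), this equals $\bigoplus_{n\geqslant 0}(F'_n A^H)t^n=\Rees_{F'}A^H$. None of these verifications requires more than the module algebra axioms and the multiplicativity of $F$; the only point that deserves a moment's care is the compatibility in (c) of the two descriptions of the $H$-action on $\gr_F A$ (via the Rees quotient and via the direct definition of Section \ref{xxsec3}), and that is settled as indicated above.
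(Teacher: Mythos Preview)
Your proof is correct. The paper itself omits the proof of this lemma entirely (it is stated with a \qed and the preceding sentence reads ``The following lemma is clear, so we omit the proof''), so your detailed verification is exactly the routine check the authors had in mind; in particular your observation that $t$ is $H$-invariant and hence $(t)$ and $(t-1)$ are $H$-stable ideals is the only point requiring any thought, and you handle it cleanly.
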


Since $B =\Rees_F A_n(k)$ is a left $H$-module algebra, we have that
$K= H^{\circ}$ coacts on $B^!$ from the left
\cite[Remark~1.6(d)]{CWZ}. Here, the $K$-comodule structure map is
denoted by $\rho: B^!\rightarrow K \otimes B^! $.

We now define an algebra $\tilde{B}^!$ that will aid in the study of
the $K$-coaction on $B^!$. Let $\Lambda = \Lambda \left( z_1,
\ldots, z_{2 n} \right)$ denote the exterior algebra in $2n$
variables and define $\tilde{B}^! = \Lambda \left[ y ; \sigma
\right]$, where $\sigma \left( z_i \right) = - z_i$ for all
$i=1,\cdots, 2n$. By recalling the presentation of $B^!$ from the
proof of Lemma \ref{lemxx4.1}(c), we see that there is a $k$-algebra
isomorphism
$$B^! \cong \tilde{B}^! / \left( y^2 +  \sum_{i = 1}^n z_i z_{n + i}\right)$$
given by $t^*\mapsto y$, $v_i^{\ast} \mapsto z_i$ and $u_i^{\ast}
\mapsto z_{n + i}$ for all $i=1,\cdots,n$. For convenience, we also
use $z_1,\cdots,z_{2n}$ and $y$ as the corresponding generators for
both $B^!$ and $\tilde{B}^!$.

\begin{lemma}\label{lemxx4.3} Let
$\{c_i\}_{i=1}^{2n}$ be a subset of $K$ and let $s := c_1 \otimes z_1
+ \cdots +  c_{2n} \otimes z_{2 n} $. For any $r> 0$, the following
identity holds in $K \otimes \tilde{B}^! $
$$    \left( 1 \otimes y+ s \right)^{2 r} = \sum_{j = 0}^r \binom{r}{j}
     \sum_{\sigma \in S_{2 j}} \left( - 1 \right)^{\mathrm{sgn}(\sigma)} c_{i_{\sigma \left( 1 \right)}} \cdots
    c_{i_{\sigma \left( 2 j \right)}}
    \otimes
\sum_{i_1 < \cdots < i_{2 j}} y^{2( r- j)} z_{i_1}\cdots z_{i_{2j}}
    .
$$
\end{lemma}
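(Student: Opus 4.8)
The plan is to expand $(1\otimes y + s)^{2r}$ directly, using the two structural facts about the algebra $\tilde B^! = \Lambda(z_1,\dots,z_{2n})[y;\sigma]$ with $\sigma(z_i) = -z_i$: first, $y$ skew-commutes with each $z_i$ (so $y z_i = -z_i y$), and second, $y^2$ is central and the $z_i$'s anticommute among themselves (with $z_i^2 = 0$). The element $s = \sum_i c_i\otimes z_i$ lives in $K\otimes\tilde B^!$; since $K$ is an ordinary (ungraded) algebra and the $c_i$ commute or not freely, I must be careful to keep the $K$-factors in the order in which they appear. The key computational observation is that $s^2 = \sum_{i,j} c_i c_j \otimes z_i z_j = \sum_{i<j}(c_i c_j - c_j c_i)\otimes z_i z_j$ is \emph{not} in general a scalar times $1\otimes(\cdots)$, so one cannot treat $1\otimes y$ and $s$ as a commuting pair of "square-zero-ish" elements; instead the binomial-type expansion has to be done by hand.

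First I would record the commutation relation $s(1\otimes y) = -(1\otimes y)s$ in $K\otimes\tilde B^!$, which follows since each $z_i$ skew-commutes with $y$ while the $K$-factor $c_i$ is unaffected. Consequently $(1\otimes y+s)^{2} = 1\otimes y^2 + (1\otimes y)s + s(1\otimes y) + s^2 = 1\otimes y^2 + s^2$, because the cross terms cancel. Since $1\otimes y^2$ is central in $K\otimes\tilde B^!$ (as $y^2$ is central in $\tilde B^!$), we get $(1\otimes y+s)^{2r} = \big(1\otimes y^2 + s^2\big)^{r} = \sum_{j=0}^{r}\binom{r}{j}(1\otimes y^{2})^{\,r-j}\,(s^2)^{\,j} = \sum_{j=0}^r \binom{r}{j}\, (s^2)^j\cdot\big(1\otimes y^{2(r-j)}\big)$, an ordinary binomial expansion valid because the two summands commute. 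So the problem reduces to computing $(s^2)^j = s^{2j}$.

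Next I would expand $s^{2j} = \big(\sum_{i=1}^{2n} c_i\otimes z_i\big)^{2j} = \sum_{(i_1,\dots,i_{2j})} c_{i_1}\cdots c_{i_{2j}} \otimes z_{i_1}\cdots z_{i_{2j}}$, summing over all tuples. Because $z_a^2 = 0$ and $z_a z_b = -z_b z_a$ in $\Lambda$, only tuples with $2j$ \emph{distinct} indices survive, and each such term can be rewritten by sorting the indices into increasing order, at the cost of $(-1)^{\mathrm{sgn}(\sigma)}$ where $\sigma$ is the permutation sorting the tuple; crucially, since the $c_i$'s need not commute, the $K$-factor $c_{i_1}\cdots c_{i_{2j}} = c_{i_{\sigma(1)}}\cdots c_{i_{\sigma(2j)}}$ must be kept written in the original (unsorted) order, i.e.\ indexed by $\sigma$ applied to the sorted multi-index. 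Reorganizing the sum as "first choose the underlying $2j$-subset $i_1<\cdots<i_{2j}$, then sum over the permutation $\sigma\in S_{2j}$ giving the order of appearance" yields exactly
$$s^{2j} = \sum_{\sigma\in S_{2j}}(-1)^{\mathrm{sgn}(\sigma)}\,c_{i_{\sigma(1)}}\cdots c_{i_{\sigma(2j)}}\otimes \sum_{i_1<\cdots<i_{2j}} z_{i_1}\cdots z_{i_{2j}},$$
matching the inner part of the claimed formula. Substituting this into the binomial expansion from the previous step gives the stated identity.

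\textbf{Main obstacle.} The only real subtlety is bookkeeping with the noncommutativity of $K$: one must be disciplined about not reordering the $c_i$ factors when sorting the $z_i$ factors, so that the notation $c_{i_{\sigma(1)}}\cdots c_{i_{\sigma(2j)}}$ in the statement is interpreted correctly (the $\sigma$ records the order of appearance of a fixed increasing subset of indices). Everything else — the cancellation of cross terms forcing $(1\otimes y+s)^2 = 1\otimes y^2 + s^2$, the centrality of $1\otimes y^2$, and the vanishing of repeated-index terms — is routine given the defining relations of $\tilde B^!$ recalled just before the lemma.
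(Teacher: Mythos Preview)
Your proof is correct and follows essentially the same approach as the paper: you use the skew-commutation of $1\otimes y$ and $s$ to get $(1\otimes y+s)^2 = 1\otimes y^2 + s^2$, then apply the binomial theorem (since $1\otimes y^2$ and $s^2$ commute), and finally expand $s^{2j}$ in $K\otimes\Lambda$ exactly as the paper does. Your treatment is in fact more explicit than the paper's, which simply says ``the formula follows by expanding $s^{2j}$ in the algebra $K\otimes\Lambda$,'' and your remark about the bookkeeping of the noncommuting $c_i$'s correctly identifies the only subtlety.
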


\begin{proof}
  Since $1\otimes y$ and $s$ skew-commute, we have $(1\otimes y + s)^2
  = 1\otimes y^2 +
  s^2$. Moreover, $1\otimes y^2$ and  $s^2$ commute, so we can use the usual binomial
  theorem
  to get
  \begin{eqnarray*}
    ( 1\otimes y + s)^{2 r} & = & \sum_{j = 0}^r
\binom{r}{j} (1 \otimes y^{2 \left( r  - j \right)} )s^{2 j}.
  \end{eqnarray*}
Since $s \in  K \otimes \Lambda$ and $\Lambda$ is the exterior
algebra, the formula follows by expanding $s^{2j}$ in the algebra
$K \otimes \Lambda $.
\end{proof}

Let $c \in K$, then left multiplication by $c$ defines a $k$-vector space
endomorphism $\mu_c : K \longrightarrow K$. We define $\tr \left( c
\right)$ to be the trace of $\mu_c$ as an element of $\End_k \left( K
\right)$.

\begin{lemma}
\label{lemxx4.4}
  Let $\rho : B^! \longrightarrow K \otimes B^!$ be a $K$-coaction of $B^!$
  with $\rho \left( y \right) = 1\otimes y + \sum_{i = 1}^{2 n} c_i\otimes
  z_i $ for $\left\{c_i\right\}_{i=1}^{2n}$ a subset of $K$, then
  \begin{eqnarray*}
    \rho \left( y \right)^{2 n + 1}  =  \left( 1 + p
    \right) \otimes y^{2 n + 1}
  \end{eqnarray*}
  where $p\in K$ with $\tr \left( p \right) = 0$.
\end{lemma}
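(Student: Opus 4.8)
The idea is to apply Lemma \ref{lemxx4.3} with $r = n$ to expand $\rho(y)^{2n+1} = \rho(y)\cdot\rho(y)^{2n} = (1\otimes y + s)^{2n+1}$, where $s = \sum_{i=1}^{2n} c_i\otimes z_i$, and then identify which terms survive in $B^! = \tilde B^!/(y^2 + \sum_{i=1}^n z_iz_{n+i})$. First I would note that $(1\otimes y + s)^{2n+1} = (1\otimes y + s)(1\otimes y + s)^{2n}$ and apply Lemma \ref{lemxx4.3} to the even power. The key structural fact is that in $B^!$, the only monomial in the $z_i$'s and $y$ of top degree $2n+1$ that is nonzero is (a scalar multiple of) $z_1z_2\cdots z_{2n}y = \mathfrak e$, and using the relation $y^2 = -\sum_{i=1}^n z_iz_{n+i}$ repeatedly one gets $y^{2n+1} = (-1)^n n!\,\mathfrak e \ne 0$ as in Lemma \ref{lemxx4.1}(d). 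So after expanding, every term must be rewritten as a multiple of $y^{2n+1}$ (equivalently of $\mathfrak e$), and I want to extract the coefficient in $K$.

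The heart of the computation is bookkeeping on the $j=n$ and $j=n-1$ terms of the Lemma \ref{lemxx4.3} expansion (lower $j$ contribute $z$-monomials of degree $<2n$ together with a power $y^{2(n-j)}$, which after substituting $y^2 = -\sum z_iz_{n+i}$ still land in degree $\le 2n$ but I need to check they either vanish or recombine into $y^{2n+1}$ with the right coefficient; in fact each full term of total degree $2n+1$ in $\tilde B^!$ maps into $k\mathfrak e$). Concretely: the $j=n$ term gives $\binom{n}{n}\sum_{\sigma\in S_{2n}}(-1)^{\operatorname{sgn}\sigma}c_{i_{\sigma(1)}}\cdots c_{i_{\sigma(2n)}}\otimes z_1\cdots z_{2n}$, i.e. $\bigl(\sum_{\sigma\in S_{2n}}(-1)^{\operatorname{sgn}\sigma}c_{\sigma(1)}\cdots c_{\sigma(2n)}\bigr)\otimes z_1\cdots z_{2n}$; multiplying on the left by $(1\otimes y)$ gives a contribution $\bigl(\text{something in }K\bigr)\otimes z_1\cdots z_{2n}y$. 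Multiplying instead by $s = \sum c_i\otimes z_i$ kills this term since $z_i^2=0$. The remaining contributions to degree $2n+1$ come from the $j$'s with $2(n-j)$ odd power of $y$ — but $2(n-j)$ is always even, so the only way to reach an odd total $y$-degree is to use the leftover factor $(1\otimes y)$; after collecting, every surviving term is of the form (element of $K$) $\otimes\, y\cdot(\text{degree-}2n\ z\text{-monomial})$ or (element of $K$)$\otimes s\cdot(\text{lower }z)$, and in $B^!$ all of these reduce, via $y^2 = -\sum z_iz_{n+i}$, to multiples of $y^{2n+1}$. So $\rho(y)^{2n+1} = (1\otimes y)(1\otimes y^{2n}) + (\text{cross terms}) = (1+p)\otimes y^{2n+1}$ for some $p\in K$, where $1$ comes from the pure $y^{2n+1}$ term and $p$ collects all terms involving at least one $c_i$.

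Finally, for the trace statement: $\tr\colon K\to k$, $c\mapsto\operatorname{tr}(\mu_c)$, is a linear functional, and $p$ is a $k$-linear combination of words $c_{i_1}\cdots c_{i_\ell}$ with $\ell\ge 1$ in which — because of the antisymmetrization $\sum_\sigma(-1)^{\operatorname{sgn}\sigma}$ inherited from Lemma \ref{lemxx4.3} and the constraint $i_1<\cdots<i_{2j}$ — each index that appears, appears an odd number of times overall in the $z$-part; more to the point, $p$ is a sum of \emph{nested commutators} (iterated brackets) of the $c_i$. Indeed the alternating sum $\sum_{\sigma\in S_{2j}}(-1)^{\operatorname{sgn}\sigma}c_{\sigma(1)}\cdots c_{\sigma(2j)}$ over a fixed increasing tuple $i_1<\cdots<i_{2j}$ is a sum of (products of) commutators $[c_{i_a},c_{i_b}]$. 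Since $\operatorname{tr}(\mu_{[a,b]}) = \operatorname{tr}(\mu_a\mu_b) - \operatorname{tr}(\mu_b\mu_a) = 0$ (trace of a commutator of operators on the finite-dimensional space $K$ vanishes — note $\mu_{ab}=\mu_a\mu_b$ since $\mu$ is an algebra map), and $\operatorname{tr}(\mu_{[a,b]c}) = \operatorname{tr}(\mu_{[a,b]}\mu_c)$ also vanishes by cyclicity applied after writing $\mu_{[a,b]} = [\mu_a,\mu_b]$, every term of $p$ has zero trace, hence $\tr(p)=0$.

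The main obstacle I anticipate is the combinatorial bookkeeping in the second paragraph: carefully justifying that, modulo the relation $y^2+\sum z_iz_{n+i}$, the only degree-$(2n+1)$ survivor is the pure $y^{2n+1}$-line and extracting the precise $K$-coefficient $1+p$ with $p$ a sum of commutator-words. Once the expansion is pinned down, the trace vanishing is immediate from $\tr$ being a trace on $\End_k(K)$ together with $\mu$ being an algebra homomorphism.
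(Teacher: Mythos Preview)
Your overall strategy matches the paper's: expand $\rho(y)^{2n}$ via Lemma~\ref{lemxx4.3}, reduce everything to the one-dimensional top piece, then multiply by one more factor of $\rho(y)$. But two points need correction.

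First, the ``bookkeeping'' you describe is more convoluted than necessary. The clean observation is that $y^2=-\sum_i z_iz_{n+i}\in\Lambda_2$, so every term $y^{2(n-j)}z_{i_1}\cdots z_{i_{2j}}$ in Lemma~\ref{lemxx4.3} already lies in $\Lambda_{2n}$, which is one-dimensional with basis $y^{2n}$. Hence $\rho(y)^{2n}=(1+p)\otimes y^{2n}$ in one step, with $p$ the $K$-linear combination of the alternating sums for $j\ge 1$ (weighted by the scalars $\lambda_{i_1\cdots i_{2j}}$ expressing $y^{2(n-j)}z_{i_1}\cdots z_{i_{2j}}$ in terms of $y^{2n}$). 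Then, since $\Lambda_{2n}$ is the top of $\Lambda$, we have $z_iy^{2n}=y^{2n}z_i=0$, so only the $1\otimes y$ factor survives when passing to $\rho(y)^{2n+1}$. There is no need to single out $j=n$ and $j=n-1$ or to worry about ``recombining'' lower terms.

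Second, and more seriously, your trace argument contains an error. You assert that $\tr(\mu_{[a,b]c})=0$ ``by cyclicity after writing $\mu_{[a,b]}=[\mu_a,\mu_b]$''. This is false: for operators $A,B,C$ on a finite-dimensional space, $\tr([A,B]C)$ need not vanish (take $A=E_{12}$, $B=E_{21}$, $C=E_{11}$ in $M_2(k)$: then $[A,B]C=E_{11}$ has trace $1$). Likewise a product of two simple commutators need not have trace zero. So the claim that $p$ is a sum of ``products of commutators'' and therefore traceless does not go through as written. The paper's argument is the following direct one: for each $j\ge 1$, cyclicity of $\tr$ gives
\[
\tr\Bigl(\sum_{\sigma\in S_{2j}}(-1)^{\sigma}c_{i_{\sigma(1)}}\cdots c_{i_{\sigma(2j)}}\Bigr)
=\tr\Bigl(\sum_{\sigma\in S_{2j}}(-1)^{\sigma}c_{i_{\sigma(2j)}}c_{i_{\sigma(1)}}\cdots c_{i_{\sigma(2j-1)}}\Bigr),
\]
and reindexing by the $2j$-cycle (which has sign $-1$) shows the right-hand side equals the negative of the left. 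Since $\ch k\ne 2$, each such alternating sum has trace zero, hence $\tr(p)=0$. Replace your commutator claim with this cyclicity-plus-sign argument and the proof is complete.
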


\begin{proof} Working with algebras $\Lambda \subset B^!$, we have
  $y^2 =-\sum_{i=1}^n z_i z_{n+i} \in \Lambda_2$. By Lemma \ref{lemxx4.1}(d),
$y^{2n}\neq 0$, so take it as a basis element for $\Lambda_{2n}$. For
$1\leq i_1<i_2<\cdots <i_{2j}\leq 2n$, we have that
$y^{2 \left( n - j \right)} z_{i_1} \cdots
  z_{i_{2 j}} \in \Lambda_{2 n} = k y^{2 n}$. Write
  $$y^{2 \left( n - j \right)} z_{i_1} \cdots
  z_{i_{2 j}} =\lambda_{i_{1}\cdots i_{2j}} y^{2n}$$
  for some $\lambda_{i_1\cdots i_{2j}}\in k$.
   So by the previous lemma, we
  have that
  \begin{eqnarray*}
    \rho \left( y \right)^{2 n} & = & \sum_{j = 0}^n
   \binom{n}{j} \sum_{i_1 < \cdots < i_{2 j}}  \lambda_{i_{1} \cdots
    i_{2j}}  \left( \sum_{\sigma \in S_{2 j}}
    \left( - 1 \right)^{\sigma} c_{i_{\sigma \left( 1 \right)}} \cdots
    c_{i_{\sigma \left( 2 j \right)}} \right) \otimes  y^{2 n}
    .
  \end{eqnarray*}
  For $j=0$, the above expression is $1\otimes y^{2n}$. If $j > 1$, then
  \begin{eqnarray*}
    \tr \left( \sum_{\sigma \in S_{2 j}} \left( - 1 \right)^{\sigma}
    c_{i_{\sigma \left( 1 \right)}} \cdots c_{i_{\sigma \left( 2 j \right)}}
    \right) & = & \tr \left( \sum_{\sigma \in S_{2 j}} \left( - 1
    \right)^{\sigma} c_{i_{\sigma \left( 2 j \right)}} c_{i_{\sigma \left( 1
    \right)}} \cdots c_{i_{\sigma \left( 2 j - 1 \right)}} \right)\\
    & = & - \tr \left( \sum_{\sigma \in S_{2 j}} \left( - 1
    \right)^{\sigma} c_{i_{\sigma \left( 1 \right)}} \cdots c_{i_{\sigma
    \left( 2 j \right)}} \right).
  \end{eqnarray*}
  The above is zero since $\ch(k)\neq 2$.
  This shows that $\rho \left( y \right)^{2 n} =  \left( 1 + p
  \right)\otimes  y^{2 n}$ where $\tr \left( p \right) = 0$. Finally, note that $y^{2 n}
  z_i = z_i y^{2 n} = 0$, so
$$\rho \left( y \right)^{2 n + 1} = \left( 1
  \otimes y \right) \rho \left( y \right)^{2 n} =    \left( 1
  + p \right)\otimes y^{2 n + 1}.$$ This completes the proof.
\end{proof}

Now we are ready to prove Theorem \ref{thmxx0.3}.

\begin{proof}[Proof of Theorem \ref{thmxx0.3}]
Let $B:=\Rees_F A_n(k)$, and let $B^!$ denote the Koszul dual of
$B$. Since $B$ is Calabi-Yau [Lemma \ref{lemxx4.1}(c)], it suffices
to show that the homological determinant of the left $H$-action on
$B$ is trivial \cite[Theorem 0.6]{CWZ}. Equivalently, we show that
the left $K = H^{\circ}$-coaction on $B^!$ has trivial homological
codeterminant.

Note that since $H$ acts on $A_n(k)$ preserving the filtration,
$H$ acts on $B$ inner-faithfully. So $K$ coacts on $B$
inner-faithfully. Hence, $K$ coacts on $B^!$ inner-faithfully
\cite[Proposition 2.5(c)]{CWZ}. Let $\rho$ denote the $K$-coaction
on $B^!$. Note that $T := k t$ is a trivial $K$-sub-comodule of
$W = k t \oplus \bigoplus_{i = 1}^n (k u_i \oplus k v_i)$ by
Lemma \ref{lemxx4.2}(b). We have a $K$-comodule map $\pi :
W^{\ast} \longrightarrow T^{\ast}$ which sends
$u^{\ast}_i$ and $v_i^{\ast}$ to zero. Since $T^{\ast}$ is also a trivial
$K$-comodule, that is $\left( 1 \otimes \pi \right) \rho \left( t^{\ast}
\right) = 1 \otimes \pi \left( t^{\ast} \right)$, we have that
\begin{eqnarray*}
  \rho \left( t^{\ast} \right) & = & 1\otimes t^{\ast}  + \sum_{i = 1}^n
(a_i \otimes u_i + b_i \otimes v_i)
\end{eqnarray*}
for some $a_i, b_i \in K$. By Lemma \ref{lemxx4.3}, we have
$$\rho((t^*)^{2n+1})=(1+p)\otimes (t^*)^{2n+1}$$
where $\tr(p)=0$.
By definition, the homological codeterminant $\sf{D}$ of the
$K$-coaction on $B^!$ is $1+p$. Moreover, $\sf{D}$ is a grouplike element.
Since $K$ is finite dimensional, $\sf{D}$ also has finite order. Now
$\tr(p)=0$ implies that $\tr({\sf D})=\tr(1)=\dim K$.
Since ${\sf D}$ has finite order, we have that ${\sf D}=1$.
Therefore the $K$-coaction on $B$ has trivial homological
codeterminant. Dually, the $H$-action on $B$ has trivial
homological determinant as desired. Thus, $H = K^{\circ}$ is semisimple.

Since $\mathrm{char}~k = 0$, $H$ is also cosemisimple, that is, $H$ equals its coradical
$H_0$. If $H$ is pointed, then $H_0 = k G(H)$. Hence, $H$ is a group
algebra.
\end{proof}

\section{The proof of Theorem \ref{thmxx0.1} and consequences}
\label{xxsec5} We return to the study of Hopf algebra actions on
filtered AS regular algebras $R$ of dimension $2$. In
this section, we prove Theorem \ref{thmxx0.1}, and as a consequence,
we classify the possible Hopf algebra actions when $R$ is non-PI.

Note that if $R$ is a non-PI filtered AS regular algebra of
dimension 2, then it follows from Lemma \ref{lemxx2.9} that the
associated graded ring $\gr_F R$ is either non-PI or commutative. We
provide preliminary results for these cases separately.

We have the following setup. Let $K$ be a finite dimensional Hopf algebra
coacting on a non-PI filtered AS regular algebra $R$. Let $\rho$ denote the
coaction and $R \simeq k \langle u, v \rangle / \left( r \right)$. Since
$\rho$ preserves the filtration, we can write
\begin{align}
  \rho \left( u \right) & \quad =\quad
u \otimes e_{11} + v \otimes e_{21} + 1 \otimes
  f_1  \label{E5.3.1}\tag{E5.0.1}\\
  \rho \left( v \right) & \quad = \quad
u \otimes e_{12} + v \otimes e_{22} + 1 \otimes
  f_2 \label{E5.3.2}\tag{E5.0.2}
\end{align}
for some $e_{i j}, f_j \in K$, $i,j = 1, 2$. Using coassociativity of the coaction, we have
\begin{align}
\Delta(e_{ij})  &\quad =\quad
\sum_{l=1}^2 e_{il} \otimes e_{lj} \label{E5.3.3}\tag{E5.0.3}\\
\Delta(f_1)  &\quad = \quad f_1 \otimes e_{11} + f_2 \otimes e_{21} + 1 \otimes f_1
\label{E5.3.4}\tag{E5.0.4}\\
\Delta(f_2)  &\quad = \quad f_1 \otimes e_{12} + f_2 \otimes e_{22} + 1 \otimes f_2
\label{E5.3.5}\tag{E5.0.5}\\
\epsilon(e_{ij}) & \quad =\quad  \delta_{ij} \quad \quad  \text{and} \quad \quad \epsilon(f_i) \quad = \quad 0.
\label{E5.3.6}\tag{E5.0.6}
\end{align}

\subsection{$\gr_F R$ is non-PI}

We need the following well-known lemma.

\begin{lemma}
\label{lemxx5.3}
Suppose $G$ is a finite group acting on $A = k_J
  \left[ u, v \right]$ or $k_q \left[ u, v \right]$ for $q \neq \pm 1$.
  Then $G$
  is abelian and the action is diagonal with respect to the basis $\{u,v\}$.  \qed
\end{lemma}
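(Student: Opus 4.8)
The plan is to classify finite group actions on the skew polynomial ring $k_q[u,v]$ (with $q\neq\pm 1$) and the Jordan plane $k_J[u,v]$ by exploiting the rigidity of their automorphism groups. Recall that for $q$ not a root of unity, and in fact for all $q\neq\pm 1$, the graded automorphism group of $k_q[u,v]$ is just the torus $(k^\times)^2$ acting diagonally by $u\mapsto \alpha u$, $v\mapsto\beta v$; similarly, the graded automorphism group of the Jordan plane $k_J[u,v]$ is the one-parameter group $u\mapsto\alpha u$, $v\mapsto\alpha v+\gamma u$ together with scaling. So the first step is: since a finite group $G$ acting on $A$ acts by (not necessarily graded) algebra automorphisms, I would first argue that each such automorphism preserves the degree-one part $ku\oplus kv$ — this holds because both algebras are connected graded, generated in degree one, with a single quadratic relation, so the generating subspace is intrinsic. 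Hence the action is by graded automorphisms, and $G$ embeds into $\Aut_{\mathrm{gr}}(A)$.

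For $A=k_q[u,v]$ with $q\neq\pm 1$: I would show $\Aut_{\mathrm{gr}}(A)=(k^\times)^2$ directly. Writing a general degree-one automorphism as a matrix $\begin{pmatrix}a&b\\c&d\end{pmatrix}$ sending $u\mapsto au+cv$, $v\mapsto bu+dv$, one imposes that the relation $vu-quv$ is sent to a scalar multiple of itself; comparing coefficients of $u^2$, $uv$, $vu$, $v^2$ forces $bc=bd=ac=ad=0$ when $q\neq\pm 1$ (the cross terms cannot cancel because $q\neq -1$ rules out $uv+vu$-type degeneracies and $q\neq 1$ rules out the symmetric case), so the matrix is diagonal or anti-diagonal; the anti-diagonal case sends $q\mapsto q^{-1}$, which is only an automorphism of the same algebra if $q=q^{-1}$, i.e. $q=\pm 1$, excluded. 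Hence the action is diagonal, and $G\hookrightarrow(k^\times)^2$ is abelian. For $A=k_J[u,v]$: a similar coefficient comparison on the relation $vu-uv-u^2$ shows any degree-one automorphism has the form $u\mapsto\alpha u$, $v\mapsto\alpha v+\gamma u$; a finite subgroup of this solvable (indeed metabelian, two-dimensional) group — which is isomorphic to a subgroup of the affine group $k\rtimes k^\times$ — must be cyclic, hence abelian, and after a change of basis can be diagonalized (a finite-order element $u\mapsto\alpha u$, $v\mapsto\alpha v+\gamma u$ with $\alpha^n=1$ is conjugate to a diagonal one provided $\alpha\neq 1$; if $\alpha=1$ the only finite-order such element is the identity).

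The key steps, in order: (1) show every algebra automorphism of $A$ is graded, i.e. preserves $U=ku\oplus kv$; (2) compute $\Aut_{\mathrm{gr}}(A)$ explicitly via coefficient comparison on the defining relation; (3) observe that this group is abelian (a torus) for the skew polynomial case, and for the Jordan plane is a metabelian group whose finite subgroups are cyclic; (4) in the Jordan case, conjugate to make the action diagonal. The main obstacle I anticipate is step (4): ruling out non-diagonalizable finite-order automorphisms of the Jordan plane — one must check carefully that a unipotent-type piece (the parameter $\gamma$, in the part $v\mapsto v+\gamma u$ with $\alpha=1$) cannot occur in a nontrivial finite-order element over a field of characteristic zero, and that when $\alpha\neq 1$ the off-diagonal term can be absorbed by translating $v$ by a multiple of $u$; this is elementary linear algebra over $k$ but is where the "characteristic zero" hypothesis is genuinely used. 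I would also note that the case $q=-1$ is deliberately excluded because then $k_{-1}[u,v]$ has the extra automorphism $u\leftrightarrow v$ and more, so the conclusion genuinely fails there.
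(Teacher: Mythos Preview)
The paper states this lemma as well-known and omits the proof entirely (the trailing \qed\ marks it as accepted without argument). Your outline is correct and supplies what the paper leaves out.

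Two small remarks. First, your step~(4) for the Jordan plane is unnecessary: iterating the automorphism $\sigma:u\mapsto\alpha u$, $v\mapsto\alpha v+\gamma u$ gives $\sigma^n(v)=\alpha^n v+n\alpha^{n-1}\gamma u$, so finite order in characteristic zero forces $\gamma=0$ outright. Thus every finite-order graded automorphism of $k_J[u,v]$ is already diagonal in the given basis $\{u,v\}$; no conjugation or change of basis is needed, and the statement of the lemma (``diagonal with respect to the basis $\{u,v\}$'') holds on the nose. Second, your step~(1) --- that every algebra automorphism of $A$ is automatically graded --- is true for these algebras but not as immediate as ``the generating subspace is intrinsic'' suggests (that reasoning would also apply to $k[u,v]$, where it fails). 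A clean argument uses the structure of normal elements or of units. That said, in the paper's actual application (the proof of Lemma~\ref{lemxx5.4}) the group action on $\gr_F R$ is induced from a filtration-preserving action on $R$ and is therefore graded by construction, so step~(1) can simply be bypassed there.
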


\begin{lemma}
  \label{lemxx5.4} Suppose that $\gr_F R$ is non-PI. Then $H$ is a
  commutative group algebra.
\end{lemma}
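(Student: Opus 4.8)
The plan is to work with the $K = H^{\circ}$-coaction on $R$ and show that, when $\gr_F R$ is non-PI, the coaction must be diagonalizable (hence $K$ is a commutative group algebra, so dually $H$ is a commutative group algebra). First I would recall from Lemma \ref{lemxx2.9} that, since $R$ is non-PI, the associated graded ring $\gr_F R$ is either non-PI or commutative; here we are in the non-PI case, so $\gr_F R$ is (up to change of basis) a skew polynomial ring $k_q[u,v]$ with $q$ not a root of unity, or the Jordan plane $k_J[u,v]$. Passing to the $K$-coaction as in \eqref{E5.3.1}--\eqref{E5.3.6}, the induced coaction on $\gr_F R$ is given by the matrix $(e_{ij})$, which by \eqref{E5.3.3} together with \eqref{E5.3.6} means $(e_{ij})$ is a $2\times 2$ comatrix (so it generates a Hopf subalgebra of $K$ that coacts on the graded algebra $\gr_F R$).

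Next I would invoke the graded classification: by \cite[Theorem 5.10]{CWZ} (or the graded analogue proved there), an inner-faithful coaction of a finite dimensional Hopf algebra on a non-PI graded AS regular algebra of dimension $2$ forces the Hopf algebra to be a commutative group algebra, and the coaction to be diagonal. Concretely, Lemma \ref{lemxx5.3} tells us that any finite group acting on $k_J[u,v]$ or on $k_q[u,v]$ with $q\neq\pm 1$ is abelian and acts diagonally; dualizing, the subalgebra of $K$ generated by the $e_{ij}$ is a commutative group algebra and, after a change of basis of $U$, we may assume $e_{12}=e_{21}=0$ and $e_{11}=g_1$, $e_{22}=g_2$ are grouplike. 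So \eqref{E5.3.1}--\eqref{E5.3.2} become $\rho(u)=u\otimes g_1 + 1\otimes f_1$ and $\rho(v)=v\otimes g_2 + 1\otimes f_2$.

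Then I would chase the coassociativity relations \eqref{E5.3.4}--\eqref{E5.3.5}: with $e_{12}=e_{21}=0$ they say $f_1$ is $(g_1,1)$-primitive (i.e. $\Delta(f_1)=f_1\otimes g_1 + 1\otimes f_1$) and $f_2$ is $(g_2,1)$-primitive. If $f_1$ or $f_2$ lies outside $kG(K)$, then $K$ contains a genuine skew-primitive element; I would then apply the relation $r$ of $R$ (in one of the normal forms from Corollary \ref{corxx2.8}, or directly the relation $vu-quv-\lambda u^2 + \cdots$ of Lemma \ref{lemxx2.4}) and compute $\rho(r)=r\otimes g$ for a grouplike $g$. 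Comparing coefficients of the monomials $u^2, uv, vu, u, v, 1$ in $R\otimes K$ — exactly as in the proof of Lemma \ref{lemxx2.7} — will force $f_1,f_2\in kG(K)$: the key point is that a nonzero skew-primitive contribution in the degree-one or degree-zero slots of $\rho(r)$ cannot be cancelled, because the leading relation of $\gr_F R$ is that of a domain (so the quadratic coefficients are constrained) and $q$ is not a root of unity (ruling out the offending cyclic identifications via Lemma \ref{lemxx2.2}). Absorbing $f_1,f_2$ by a translation $u\mapsto u+\alpha$, $v\mapsto v+\beta$ of the generators then makes the coaction purely diagonal, $\rho(u)=u\otimes g_1$, $\rho(v)=v\otimes g_2$; by inner-faithfulness $K$ is generated by $g_1,g_2$, hence $K\cong k(C_m\times C_{m'})$ is a commutative group algebra, and dually $H$ is a commutative group algebra.

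The main obstacle I anticipate is the case analysis in the last step: carefully ruling out nonzero skew-primitive $f_1,f_2$ when the relation of $R$ has the form $vu-uv-v$ (the case where $\hdet$ is nontrivial and a translation by a constant is not available), and making sure that in each normal form of $r$ from Corollary \ref{corxx2.8} the translation actually eliminates $f_1$ and $f_2$ simultaneously while keeping the coaction grouplike-diagonal. This is the same delicate coefficient-comparison argument as in Lemma \ref{lemxx2.7}, and the non-PI hypothesis (via $q\notin\mathbb{U}$ and Lemma \ref{lemxx2.9}) is exactly what prevents the degenerate cyclic scenarios where a skew-primitive could survive.
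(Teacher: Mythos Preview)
Your approach is essentially the paper's: diagonalize the induced $K$-coaction on $\gr_F R$ via \cite[Theorem 5.10]{CWZ} and Lemma \ref{lemxx5.3} to get $e_{12}=e_{21}=0$ with $e_{11},e_{22}$ grouplike and $f_i$ $(1,e_{ii})$-primitive, then compare coefficients in $\rho(r)=r\otimes g$ and invoke Lemma \ref{lemxx2.2} to force $f_1,f_2\in kG(K)$, so $K$ is generated by commuting grouplikes.

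Two corrections to your case analysis. First, the relation $r=vu-uv-v$ you flag as the main obstacle does not occur here: for that $r$ one has $\gr_F R\cong k[u,v]$, which is commutative and hence PI, contrary to the hypothesis of this lemma (that case is treated separately in Lemma \ref{lemxx5.7}). Second, the only two graded models actually in play are $k_q[u,v]$ with $q$ not a root of unity and the Jordan plane $k_J[u,v]$, and they use different parts of Lemma \ref{lemxx2.2}. In the $k_q$ case the $u$-coefficient of $\rho(r)$ gives $\eta_{e_{11}}(f_2)-qf_2\in kG(K)$, and since $q\notin\mathbb{U}\cup\{1\}$ Lemma \ref{lemxx2.2}(b) forces $f_2\in kG(K)$ (and symmetrically for $f_1$). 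In the Jordan plane case the leading relation has $q=1$, so Lemma \ref{lemxx2.2}(b) gives nothing; instead the $u^2$ and $vu$ coefficients first yield $e_{11}=e_{22}$, and then the $v$-coefficient gives $\eta_{e_{22}}(f_1)-f_1\in kG(K)$, where Lemma \ref{lemxx2.2}(c) (not (b)) is what forces $f_1\in kG(K)$, after which the $u$-coefficient handles $f_2$. Your sketch leans entirely on ``$q$ not a root of unity,'' which does not cover the Jordan plane; once you separate the two cases as above, the argument goes through exactly as in the paper.
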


\begin{proof}
  Let $K'$ be the Hopf subalgebra of $K$ generated by $\left\{ e_{i j}
\right\}_{i, j = 1}^2$. Then by definition $K'$ coacts on $\gr_F R$
inner-faithfully. Since the $K'$-coaction on $\gr_F R$ is inner-faithful, by
  \cite[Theorem 5.10]{CWZ}, we have that $K'$ is the dual of a finite group
  algebra. By Lemma \ref{lemxx5.3}, the coaction $\rho'$ is diagonal with
respect to $\{u,v\}$ where $\gr_F R=k_q[u,v]$ for $k_J[u,v]$. Hence we
can write
$$\rho \left( u \right)  =  u \otimes e_{11} + 1 \otimes f_1 \quad \text{and} \quad
    \rho \left( v \right)  =  v \otimes e_{22} + 1 \otimes f_2$$
  where $e_{i i}$ is grouplike (by Lemma~\ref{lem:1dimlcoact}) and $f_i$ is $\left( 1, e_{i i}
  \right)$-primitive for $i = 1, 2$.

  Suppose that  $\gr_F R  =k \langle u,v \rangle / \left( v u - q u v
  \right)$. Then the relation $r$ of $R$ is of the form $r = v u - q u v + a u + b v + c$. Note that $\rho \left( r
  \right) = r \otimes g$ for some grouplike element $g$. So, we have that
  \begin{eqnarray*}
    \rho \left( r \right) & = & v u \otimes e_{22} e_{11} - q u v \otimes
    e_{11} e_{22} + u \otimes \left( f_2 e_{11} - q e_{11} f_2 + a e_{11}
    \right) +\\
    &  & v \otimes \left( e_{22} f_1 - q f_1 e_{22} + b e_{22} \right) + 1
    \otimes \left( f_2 f_1 - q f_1 f_2 + a f_1 + b f_2 + c \right)
  \end{eqnarray*}
  By comparing the coefficients of $u$, we have that  $a g = f_2 e_{11} - q e_{11}
  f_2 + a e_{11}$. In particular, $\eta_{e_{11}} \left( f_2 \right)
- q f_2 \in k
  G(K)$. Since $q \not\in \mathbb{U} \cup \left\{ 1 \right\}$, by Lemma
  \ref{lemxx2.2}(b), we have that $f_2 \in k G(K)$. Similarly $f_1 \in k G(K)$, hence
  $K$ is a group algebra.

Finally, by comparing coefficients of $v u$ and $u
  v$, we get $e_{22} e_{11} = g$ and $- q g = - q e_{11} e_{22}$, so $\left[
  e_{11}, e_{22} \right] = 0$. Since $f_i \in k G(K)$ is $\left( 1, e_{i i}
  \right)$-primitive we have $f_i \in k \left( 1 - e_{i i} \right)$ for $i =
  1, 2$. Hence $K$ is commutative and generated by grouplike elements
$e_{11}$ and $e_{22}$. Therefore, $H=K^\circ$ is a commutative group algebra.

  Now suppose $\gr_F R = k \langle u,v \rangle / \left( v u - u v -
  u^2 \right)$. Then $r = v u - u v - u^2 + a u + b v + c$. We have
  \begin{eqnarray*}
    \rho \left( r \right) & = & u^2 \otimes (- e_{11}^2) + v u \otimes e_{22}
    e_{11} - u v \otimes e_{11} e_{22} \\
& & + u \otimes \left( \left[ f_2, e_{11}
    \right] - e_{11} f_1 - f_1 e_{11} + a e_{11} \right)\\
    &  & + v \otimes \left( \left[ e_{22}, f_1 \right] + b e_{22} \right) + 1
    \otimes \left( \left[ f_2, f_1 \right] - f_1^2 + a f_1 + b f_2 + c \right)
    .
  \end{eqnarray*}
  Again $\rho \left( r \right) = r \otimes g$ for some grouplike element $g$ by Lemma~\ref{lem:1dimlcoact}.
  By comparing the coefficients of $u^2$ and $v u$, we have that $e_{11}^2 = g =
  e_{22} e_{11}$. Hence $e_{11} = e_{22}$. Comparing the coefficients of $u$
  and $v$ give
  \begin{align}
    a g &\quad =\quad
\left[ f_2, e_{11} \right] - e_{11} f_1 - f_1 e_{11} + a e_{11},
    \label{E5.4.1}\tag{E5.2.1}\\
    b g & \quad = \quad \left[ e_{22}, f_1 \right] + b e_{22} .
\label{E5.4.2}\tag{E5.2.2}
  \end{align}
  Rearranging equation (\ref{E5.4.2}) gives $\eta_{e_{22}}
\left( f_1 \right) - f_1
  \in k G(K)$, so by Lemma \ref{lemxx2.2}(c) we have that $f_1 \in k G(K)$.
Now we
  can rearrange (\ref{E5.4.1}) to give $\eta_{e_{11}}
\left( f_2 \right) - f_2 \in
  k G(K)$, so similarly $f_2 \in k G(K)$. Now $f_i$ is
$(1,e_{ii})$-primitive,
  so $f_i \in k(1-e_{ii})$ for $i=1,2$. Since $e_{11}=e_{22}$,
we see that $K$ is generated
  by a single grouplike element, so $K=k C_m$. Consequently, $H$ is
a commutative group algebra.
\end{proof}

\subsection{$\gr_F R$ is commutative} Now we study the case where $\gr_F R \cong k[u,v]$.

\label{secxx5.2}

\begin{lemma}
  \label{lemxx5.5}
If $R$ is non-PI and $\gr_F R$ is isomorphic to $k[u,v]$, then $R$ is isomorphic to
either $A_1 \left( k \right)$ or $k \langle u,v \rangle /
\left( v u - u v -v  \right)$.
\end{lemma}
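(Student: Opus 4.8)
The plan is to determine the defining relation of $R$ up to a change of the generating space $U$ and a rescaling, using the hypothesis $\gr_F R\cong k[u,v]$ to pin down its leading term and the non-PI hypothesis to discard the commutative possibility.

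First I would record the shape of the relation. By the classification of Artin-Schelter regular algebras of dimension $2$ recalled in Section \ref{xxsec1}, every such algebra generated in degree $1$ is either the Jordan plane or a skew polynomial ring $k_q[u,v]$; among these, only $k_1[u,v]=k[u,v]$ is commutative, so the hypothesis $\gr_F R\cong k[u,v]$ forces the degree-$2$ part of the relation $r$ of $R$ to be a nonzero scalar multiple of $vu-uv$, involving no $u^2$ or $v^2$ term. Hence, arguing exactly as in the proof of Lemma \ref{lemxx2.4}, after rescaling $r$ and choosing a suitable basis $\{u,v\}$ of $U$ we may assume
$$r = vu - uv + au + bv + c,\qquad a,b,c\in k.$$

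Next I would split into two cases. If $(a,b)=(0,0)$, then $r=vu-uv+c$; if moreover $c=0$ then $R=k[u,v]$ is commutative, hence PI, contradicting the hypothesis, so $c\neq 0$. Substituting $u\mapsto -c^{-1}u$ (equivalently, rescaling the relation) turns $r$ into $vu-uv-1$, so $R\cong A_1(k)$. If instead $(a,b)\neq(0,0)$, then after a further change of basis of $U$ and a rescaling I may assume $a=0$ and $b\neq 0$, so that $r=vu-uv+bv+c$. Replacing $v$ by $v+b^{-1}c$ leaves the commutator $vu-uv$ unchanged and absorbs the constant term, giving $vu-uv+bv$; replacing $u$ by $bu$ (equivalently dividing the relation by $b$) then gives $vu-uv+v$, and finally $u\mapsto -u$ produces $vu-uv-v$. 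Thus $R\cong k\langle u,v\rangle/(vu-uv-v)$, and the two cases together give the claim.

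All of these steps are elementary changes of variables, so I do not expect a serious obstacle; the one place that needs care is the very first reduction, where one must be certain that $R$ has a single defining relation living in $F_2$ whose top-degree part, because $\gr_F R\cong k[u,v]$, is exactly $vu-uv$ up to a scalar and a change of basis of $U$. It is worth noting that the non-PI hypothesis enters only once, namely to discard the case $c=0$ in the first branch; in the second branch the resulting algebra $k\langle u,v\rangle/(vu-uv-v)$ is automatically non-PI, being isomorphic to the Ore extension $k[u][v;\sigma]$ with $\sigma(u)=u+1$ of infinite order, and $A_1(k)$ is non-PI in characteristic zero, so both conclusions are consistent with the hypotheses.
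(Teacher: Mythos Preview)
Your proof is correct and takes essentially the same approach as the paper's: write $r=vu-uv+au+bv+c$ from $\gr_F R\cong k[u,v]$, split on whether $(a,b)=(0,0)$, and normalize by affine changes of the generators. The paper's version is terser (it simply asserts that a change of variables gives $vu-uv-v$ when $(a,b)\neq(0,0)$), while you spell out the substitutions and make explicit that the non-PI hypothesis is used only to exclude $a=b=c=0$.
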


\begin{proof}
Since $\gr_F R \cong k \left[ u, v
  \right]$,  the relation $r$ of $R$ is of the form $r = v u - u v + a u + b v + c$. If $a = b = 0$, then $R \cong A_1 \left( k
  \right)$. If either $a$ or $b$ is nonzero, then by a change of variables,  $R \cong k \langle u,v \rangle / \left( v u - u v -v \right)$ as desired.
\end{proof}

Now suppose that $H$ acts on $R = k \langle u,v \rangle/ (vu-uv-v)$.

\begin{lemma}
\label{lemxx5.6}
 Retain the notation from the beginning of the section. Then $e_{11}=1$, $e_{12}=0$, and up to linear transformation $e_{21}=0$.
\end{lemma}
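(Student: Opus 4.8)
The plan is to extract a system of identities in $K$ from the requirement that $\rho$ preserve the single relation $r=vu-uv-v$ of $R$, and then to read off the $e_{ij}$ from those identities together with the coproduct formulas \eqref{E5.3.3}--\eqref{E5.3.6} and Lemma \ref{lemxx2.2}. First I would extend $\rho$ to the algebra map $\tilde\rho\colon k\langle u,v\rangle\to k\langle u,v\rangle\otimes K$ determined by \eqref{E5.3.1}--\eqref{E5.3.2}; since $\tilde\rho$ preserves the filtration and descends to $R$, the element $\tilde\rho(r)$ lies in $\bigl((r)\otimes K\bigr)\cap\bigl(F_2k\langle u,v\rangle\otimes K\bigr)=kr\otimes K$ — the last equality because no cancellation occurs among the degree‑three leading terms of elements of $(r)$ — so $\tilde\rho(r)=r\otimes g$ with $g$ grouplike by Lemma \ref{lem:1dimlcoact}. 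Expanding $\tilde\rho(vu)-\tilde\rho(uv)-\tilde\rho(v)$ and matching coefficients of the basis $1,u,v,u^2,uv,vu,v^2$ of $F_2k\langle u,v\rangle$ against those of $r\otimes g$ then gives the quadratic identities \[ [e_{11},e_{12}]=0,\quad [e_{21},e_{22}]=0,\quad e_{11}e_{22}-e_{12}e_{21}=g=e_{22}e_{11}-e_{21}e_{12}, \] the linear identities $[f_2,e_{11}]+[e_{12},f_1]=e_{12}$ and $[e_{22},f_1]+[f_2,e_{21}]=e_{22}-g$, and the constant identity $[f_2,f_1]=f_2$; here $g$ is the homological codeterminant of the induced $K$-coaction on $\gr_F R\cong k[u,v]$.

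The crux — and the step I expect to be the main obstacle — is to deduce $e_{12}=0$ and $e_{11}=1$. This is exactly where one must use that $\gr_F R\cong k[u,v]$ is commutative, so that Lemma \ref{lemxx5.3} no longer forces the coaction on $\gr_F R$ to be diagonal; one instead exploits the genuine linear term $-v$ in the relation of $R$ and the fact that $R$ is rigidly non‑graded. The route I would take is to combine the two linear‑term identities with \eqref{E5.3.3}--\eqref{E5.3.6}: $e_{12}$ is $(e_{11},e_{22})$‑skew‑primitive, the $f_i$ are constrained by \eqref{E5.3.4}--\eqref{E5.3.5}, and rearranging $[e_{22},f_1]+[f_2,e_{21}]=e_{22}-g$ places $\eta$‑twists of the $f_i$ in $kG(K)$, so that Lemma \ref{lemxx2.2}, together with the absence of nonzero primitive elements in the finite‑dimensional $K$ (characteristic zero), should force $e_{22}$ to be grouplike and equal to $g$. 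Granting this, $\Delta(e_{22})=e_{21}\otimes e_{12}+e_{22}\otimes e_{22}$ must reduce to $e_{22}\otimes e_{22}$, which yields $e_{12}=0$ after a brief split into the cases $e_{21}=0$ and $e_{21}\neq 0$, and then $e_{11}e_{22}=g=e_{22}$ gives $e_{11}=1$. A useful cross‑check for organizing this case analysis is that absorbing a nonzero $e_{12}$ or $e_{11}-1$ into a linear change of $\{u,v\}$ would, after comparison with $\gr_F R$, be incompatible with $R\not\cong\gr_F R$ as algebras; the argument should mirror that of Lemma \ref{lemxx2.7}(a,d,e) but without the properness assumption available there.

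Once $e_{11}=1$ and $e_{12}=0$ are in hand the last assertion is short. Then $e_{22}$ is grouplike, hence invertible in $K$, the identity $[e_{21},e_{22}]=0$ holds, and \eqref{E5.3.3} gives $\Delta(e_{21})=e_{21}\otimes 1+e_{22}\otimes e_{21}$. Put $f:=e_{22}^{-1}e_{21}$; then $\Delta(f)=1\otimes f+f\otimes e_{22}^{-1}$, so $f$ is $(1,e_{22}^{-1})$‑primitive, while $\eta_{e_{22}^{-1}}(f)=e_{22}fe_{22}^{-1}=e_{21}e_{22}^{-1}=f$. If $f\notin kG(K)$, then Lemma \ref{lemxx2.2}(c) applied to this $f$ with $q=1$ forces $1\in\mathbb{U}$, which is absurd; hence $f\in kG(K)$, so $e_{21}=e_{22}f\in kG(K)$, and being $(e_{22},1)$‑skew‑primitive inside a group algebra it lies in $k(e_{22}-1)$. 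Finally, for the appropriate scalar $\alpha$ the substitution $u\mapsto u+\alpha v$ leaves the relation $vu-uv-v$ unchanged and replaces $e_{21}$ by $e_{21}-\alpha(e_{22}-1)=0$, which is the asserted normal form.
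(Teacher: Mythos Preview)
Your setup and the last paragraph (deducing $e_{21}\in k(e_{22}-1)$ from Lemma~\ref{lemxx2.2}(c) and absorbing it by $u\mapsto u+\alpha v$) are correct and match the paper's argument. The problem is the crux step, where your plan does not close.

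You want to apply Lemma~\ref{lemxx2.2} to conclude that $e_{22}$ is grouplike (hence $e_{21}\otimes e_{12}=0$, etc.). But Lemma~\ref{lemxx2.2} takes as input a $(1,g)$-primitive $f\notin kG$ with $g\in G(K)$ and an identity $\eta_h(f)-qf\in kG$ for some grouplike $h$. At this stage none of $e_{11},e_{22}$ is known to be grouplike, and the coproducts of $f_1,f_2,e_{12},e_{21}$ in \eqref{E5.3.3}--\eqref{E5.3.5} are only ``matrix'' coproducts, not skew-primitive ones. The linear identity $[e_{22},f_1]+[f_2,e_{21}]=e_{22}-g$ mixes two unknowns $f_1,f_2$ with two non-grouplike coefficients $e_{22},e_{21}$, so it cannot be rearranged into the form $\eta_h(f)-qf\in kG$ required by Lemma~\ref{lemxx2.2}. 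The argument as written is circular: you need $e_{11},e_{22}$ grouplike to invoke Lemma~\ref{lemxx2.2}, but that is essentially the conclusion $e_{12}=0,e_{21}=0$ you are after. The cross-check via ``absorbing $e_{12}$ or $e_{11}-1$ into a linear change of basis'' also does not help, since such a change alters the relation $vu-uv-v$.

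The paper's device is different and bypasses this circularity entirely: from the matrix identity $\bigl(\begin{smallmatrix}e_{11}&e_{12}\\e_{21}&e_{22}\end{smallmatrix}\bigr)\bigl(\begin{smallmatrix}e_{22}&-e_{12}\\-e_{21}&e_{11}\end{smallmatrix}\bigr)=gI$ one reads off $S(e_{ij})$ and then $S(f_i)$ via the antipode axiom and \eqref{E5.3.4}--\eqref{E5.3.5}. A direct computation (using the two linear identities you wrote down) gives
\[
S^2(f_1)=g(e_{11}+f_1-1)g^{-1},\qquad S^2(f_2)=g(e_{12}+f_2)g^{-1},
\]
so $(\eta_g\circ S^2)^n(f_2)=f_2+ne_{12}$ and $(\eta_g\circ S^2)^n(f_1)=f_1+n(e_{11}-1)$. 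Since $K$ is finite dimensional, both $S^2$ and $\eta_g$ have finite order; choosing $m$ with $(\eta_g\circ S^2)^m=\id_K$ forces $me_{12}=0$ and $m(e_{11}-1)=0$, hence $e_{12}=0$ and $e_{11}=1$ in characteristic zero. Only then does $e_{22}$ become grouplike and Lemma~\ref{lemxx2.2}(c) become applicable, exactly as in your final paragraph.
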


\begin{proof}
Consider the following computation:
  \begin{eqnarray*}
    \rho (r) & = & \rho (vu - uv -v)\\
    & = & u^2 \otimes (e_{12} e_{11} - e_{11} e_{12}) + u v \otimes (e_{12}
    e_{21} - e_{11} e_{22})\\
    &  & + v u \otimes (e_{22} e_{11} - e_{21} e_{12}) + v^2 \otimes (e_{22}
    e_{21} - e_{21} e_{22})\\
    &  & + u \otimes (f_2 e_{11} + e_{12} f_1 - e_{11} f_2 - f_1 e_{12} -
    e_{12})\\
    &  & + v \otimes (f_2 e_{21} + e_{22} f_1 - e_{21} f_2 - f_1 e_{22} -
    e_{22})\\
    &  & + 1 \otimes (f_2 f_1 - f_1 f_2 -  f_2).
  \end{eqnarray*}
  Since $\rho (r) = r \otimes g$ for some grouplike element $g$ by Lemma~\ref{lem:1dimlcoact}, we have that
  \[ \begin{array}{llllll}
       0 = e_{12} e_{11} - e_{11} e_{12}, &  &  &  &  & 0 = f_2 e_{11}+ e_{12} f_1
        - e_{11} f_2 - f_1 e_{12} - e_{12},\\
       g = e_{11} e_{22} - e_{12} e_{21}, &  &  &  &  & g = -f_2 e_{21} - e_{22}f_1
       + e_{21} f_2 + f_1 e_{22} + e_{22},\\
       g = e_{22} e_{11} - e_{21} e_{12}, &  &  &  &  & 0 = f_2 f_1 - f_1 f_2 -
       f_2,\\
       0 = e_{22} e_{21} - e_{21} e_{22} . &  &  &  &  &
     \end{array} \]
  Here, the four equations in the left column are given as
  \[ \left(\begin{array}{cc}
       e_{11} & e_{12}\\
       e_{21} & e_{22}
     \end{array}\right) \left(\begin{array}{cc}
       e_{22} & - e_{12}\\
       - e_{21} & e_{11}
     \end{array}\right) = \left(\begin{array}{cc}
       g & 0\\
       0 & g
     \end{array}\right) . \]
  Now,
  \begin{eqnarray*}
    \left(\begin{array}{cc}
      S (e_{11}) & S (e_{12})\\
      S (e_{21}) & S (e_{22})
    \end{array}\right) & = & \left(\begin{array}{cc}
      e_{22} g^{- 1}  & - e_{12} g^{- 1}\\
      - e_{21} g^{- 1} & e_{11} g^{- 1}
    \end{array}\right),
\end{eqnarray*}
and by Equations (\ref{E5.3.3}) - (\ref{E5.3.6}) and the antipode axiom, we have that
\begin{eqnarray*}
    S (f_1) & = & - f_1 e_{22} g^{- 1} + f_2 e_{21} g^{- 1},\\
    S (f_2) & = & f_1 e_{12} g^{- 1} - f_2 e_{11} g^{- 1} .
  \end{eqnarray*}
 Consider the seven relations above. By applying the antipode to the four equations in the left column above, and by using appropriate substitutions from the first two equations in the right column above, we obtain that
  \begin{eqnarray*}
    S^2 (f_1) & = & g (e_{11}+f_1 - 1) g^{- 1},\\
    S^2 (f_2) & = & g (e_{12}+ f_2) g^{- 1} .
  \end{eqnarray*}
  Let $\eta_g$ be the conjugation $a \mapsto g^{- 1} ag$. Since $\eta_g \circ
  S^2 (f_2) = e_{12}+ f_2$ and $\eta_g \circ S^2 (e_{12}) = e_{12}$, we have
  that $(\eta_g \circ S^2)^n (f_2) = f_2 + ne_{12}$. Since $K$ is finite
  dimensional, both $\eta_g$ and $S^2$ have finite order.
Now there exists $m \geqslant 1$ such that $(\eta_g
  \circ S^2)^m = \mathrm{Id}_K$. Thus $me_{12} = 0$ and $e_{12}=0$ as claimed.

On the other hand, we have that
$(\eta_g\circ S^{2})^n(f_1)= f_1+n(e_{11}-1)$. A similar argument shows that
$e_{11}=1$.

As a consequence, $e_{22}$ is grouplike and $e_{21}$ is
$(e_{22},1)$-primitive. Since $e_{22}e_{21}=e_{21}e_{22}$, we have
that $\eta_{e_{22}}(e_{21})-e_{21}=0$. Lemma \ref{lemxx2.2}(c)
implies that $e_{22}^{-1}e_{21}\in kG$. Therefore,
$e_{21}=c(1-e_{22})$ for some $c\in k$. Replacing $v$ by $v+cu$, we
have $e_{21}=0$.
\end{proof}

\begin{lemma}
  \label{lemxx5.7}
Let $R = k \langle u,v \rangle / \left( r \right)$ where $r = v
  u - u v -v$. Then $H$ is a commutative group algebra.
\end{lemma}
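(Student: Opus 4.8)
The plan is to determine the $K$-coaction $\rho$ of $K=H^{\circ}$ completely and show that it is diagonal with respect to a single grouplike element. By Lemma~\ref{lemxx5.6}, after a linear change of variables we may assume $e_{11}=1$, $e_{12}=0$ and $e_{21}=0$, so that
$$\rho(u)=u\otimes 1+1\otimes f_1,\qquad \rho(v)=v\otimes e_{22}+1\otimes f_2,$$
where $e_{22}$ is grouplike (as recorded at the end of the proof of Lemma~\ref{lemxx5.6}). With these values the coassociativity equation \eqref{E5.3.4} becomes $\Delta(f_1)=f_1\otimes 1+1\otimes f_1$, so $f_1$ is a primitive element of the finite dimensional Hopf algebra $K$; since $\ch k=0$, Lemma~\ref{lemxx2.2}(a) forces $f_1=0$, that is, $u$ is a $K$-coinvariant. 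Similarly \eqref{E5.3.5} gives $\Delta(f_2)=f_2\otimes e_{22}+1\otimes f_2$, so $f_2$ is $(1,e_{22})$-primitive.

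Next I would feed this into the defining relation $r=vu-uv-v$. Using $f_1=0$, a short computation in $k\langle u,v\rangle\otimes K$ yields
$$\rho(r)=(vu-uv)\otimes e_{22}-v\otimes e_{22}-1\otimes f_2=r\otimes e_{22}-1\otimes f_2.$$
On the other hand $\rho(r)=r\otimes g$ for some grouplike $g\in K$ by Lemma~\ref{lem:1dimlcoact}. Comparing the coefficients of the monomials $vu$, $uv$, $v$ and $1$ in the free algebra $k\langle u,v\rangle$ on both sides gives $g=e_{22}$ and, crucially, $f_2=0$.

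Hence $\rho(u)=u\otimes 1$ and $\rho(v)=v\otimes e_{22}$, a diagonal coaction. Since the $K$-coaction on $R$ is inner-faithful, $K$ coincides with the Hopf subalgebra generated by the matrix coefficients $e_{11}=1$, $e_{12}=e_{21}=f_1=f_2=0$ and $e_{22}$; thus $K$ is generated by the single grouplike element $e_{22}$, which has finite order $m$, so $K\cong kC_m$. Consequently $H=K^{\circ}$ is a commutative group algebra. I do not anticipate a real obstacle here: the only point requiring care is the coefficient comparison for $\rho(r)$, which must be performed in the free algebra $k\langle u,v\rangle$ (where $vu$, $uv$, $v$, $u$ and $1$ are linearly independent) before passing to $R$, and checking that the change of variable from Lemma~\ref{lemxx5.6} producing $e_{21}=0$ (which only replaces $v$ by $v+cu$) preserves the earlier normalizations $e_{11}=1$ and $e_{12}=0$.
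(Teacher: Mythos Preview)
Your argument is correct and follows essentially the same route as the paper's proof: invoke Lemma~\ref{lemxx5.6} to reduce to $e_{11}=1$, $e_{12}=e_{21}=0$, use Lemma~\ref{lemxx2.2}(a) to kill $f_1$, then compute $\rho(r)$ and compare with $r\otimes g$ to obtain $g=e_{22}$ and $f_2=0$, whence $K$ is generated by the single grouplike $e_{22}$. Your added remarks about performing the coefficient comparison in the free algebra and about the change of basis from Lemma~\ref{lemxx5.6} preserving the normalizations are reasonable points of care, though note that the substitution achieving $e_{21}=0$ in fact also leaves the relation $r=vu-uv-v$ unchanged (the commutator $vu-uv$ is linear in each variable), so no adjustment to $r$ is needed afterward.
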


\begin{proof}
By Lemma \ref{lemxx5.6}, we may assume that $e_{12}=e_{21}=0$ and
$e_{11}=1$. Since $e_{11} = 1$, $f_1$ is a primitive element, so
$f_1 = 0$ [Lemma \ref{lemxx2.2}(a)]. Therefore we have that
 \begin{eqnarray*}
    \rho \left( r \right) & = & v u \otimes e_{22} - u v \otimes e_{22}
- v \otimes e_{22}  - 1 \otimes f_2.
  \end{eqnarray*}
 Since $\rho \left( r \right) = r \otimes g$ for some grouplike element
  $g$ by Lemma~\ref{lem:1dimlcoact}, we have $g=e_{22}$ and $f_2=0$.
Hence, $K$ is generated by $e_{22}$, which is a commutative group
algebra.
\end{proof}

\subsection{Proof of Theorem \ref{thmxx0.1}} Here, we prove Theorem \ref{thmxx0.1} and list an immediate
consequence afterward.

\begin{proof}[Proof of Theorem \ref{thmxx0.1}]
If $R$ is non-PI, then $\gr_F R$ is non-PI
  or $\gr_F R \cong k \left[ u, v \right]$. If $\gr_F R$ is
  non-PI, then the conclusion follows from Lemma \ref{lemxx5.4}. If $\gr_F R
  \cong k \left[ u, v \right]$, then by Lemma \ref{lemxx5.5}, $R$
is isomorphic to either $k \langle u,v \rangle /( v u - u v - v)$ or
$A_1(k)$. In the first case, the conclusion follows from Lemma
\ref{lemxx5.7}. We see that in each case
(where $R \not\cong A_1 \left( k \right)$), $K$ (and $H$) is a commutative
group algebra. Finally assume that $R=A_1(k)$. By Theorem \ref{thmxx0.3},
$H$ is semisimple. Therefore the $H$-action is proper
[Lemma \ref{lemxx1.5}(a)]. By Lemma \ref{lemxx1.5}(b,c), the induced
$H$-action on $\gr_F R$ is inner-faithful. Since
$\gr_F R\cong k[u,v]$, $H$ is a group algebra by
\cite[Proposition~0.7]{CWZ}.
\end{proof}

\begin{corollary}
\label{xxcor5.2}
  Let $R$ be a filtered AS regular algebra of dimension $2$. Suppose the
  $H$-action on $R$ is inner-faithful and preserves the filtration of
  $R$. If the $H$-action on $R$ is non-proper, then $R$ is PI.
\end{corollary}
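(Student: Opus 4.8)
The plan is to prove the contrapositive: assuming $R$ is non-PI, I will show that the $H$-action on $R$ is automatically proper, so that a non-proper action cannot occur unless $R$ is PI. Thus suppose $R$ is a non-PI filtered AS regular algebra of dimension $2$ on which the finite dimensional Hopf algebra $H$ acts inner-faithfully and in a filtration-preserving manner.

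First I would apply Theorem \ref{thmxx0.1} verbatim. Its hypotheses are exactly ``$R$ non-PI filtered AS regular of dimension $2$'' and ``$H$ finite dimensional acting inner-faithfully and preserving the filtration'' --- and crucially it makes no properness assumption --- so under our standing hypotheses it yields that $H$ is a group algebra, say $H = kG$ for a finite group $G$. Since the base field $k$ has characteristic zero, Maschke's theorem then gives that $H = kG$ is semisimple.

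Next I would invoke Lemma \ref{lemxx1.5}(a): when $H$ is semisimple, every $H$-action on a filtered algebra is proper. Concretely, semisimplicity lets one split $k1 + U = k1 \oplus U'$ as a direct sum of left $H$-submodules, and replacing the generating space $U$ by the complement $U'$ exhibits condition (c) of Definition \ref{defxx1.3}. Hence the given $H$-action on $R$ is proper, contradicting the assumption that it is non-proper. Therefore a non-proper, inner-faithful, filtration-preserving $H$-action forces $R$ to be PI, which is the assertion of Corollary \ref{xxcor5.2}.

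I do not anticipate any real obstacle: the corollary is a formal consequence of Theorem \ref{thmxx0.1}, of the semisimplicity of finite group algebras in characteristic zero, and of Lemma \ref{lemxx1.5}(a). The one point worth flagging for the reader is that the argument is not circular --- Theorem \ref{thmxx0.1} is available without assuming properness, so we are entitled to conclude $H = kG$ (hence semisimple, hence the action is proper) purely from inner-faithfulness and filtration preservation.
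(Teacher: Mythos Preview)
Your proposal is correct and matches the paper's intended argument: the corollary is stated immediately after the proof of Theorem~\ref{thmxx0.1} as an immediate consequence, and the implicit reasoning is precisely the contrapositive you give---Theorem~\ref{thmxx0.1} (which indeed has no properness hypothesis) forces $H=kG$, hence $H$ is semisimple in characteristic zero, and Lemma~\ref{lemxx1.5}(a) then makes the action proper.
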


\subsection{Additional consequences of Theorem \ref{thmxx0.1}}

In the rest of this section, we give more information about $H$-actions on
filtered AS regular algebras $R$ of dimension $2$ which are non-PI. Note
that Theorem \ref{thmxx0.1} does not provide any information about which
groups occur in the case where $R \cong A_1 \left( k \right)$.
Fortunately, this has been done in \cite[Proposition on page 84]{AHV}.

\begin{corollary}
\label{corxx5.8} Let $k={\mathbb C}$ and let $H$ be a finite
dimensional Hopf algebra acting on $A_1(\mathbb{C})$ inner-faithfully.
Then $H=kG$ where $G$ is a finite subgroup of $SL_2(\mathbb C)$,
which is conjugate to one of the following special subgroups:
\begin{enumerate}
\item[(1)]
a cyclic group of order $n$,
\item[(2)]
a binary dihedral group of order $4n$,
\item[(3)]
a binary tetrahedral group of order $24$,
\item[(4)]
a binary octahedral group of order $48$, or
\item[(5)]
a binary icosahedral group of order $120$.  \qed
\end{enumerate}
\end{corollary}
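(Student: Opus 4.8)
The plan is to assemble the statement from Theorem~\ref{thmxx0.1}, the classification of finite subgroups of $\Aut(A_1(\mathbb{C}))$ in \cite{AHV}, and the classical classification of finite subgroups of $SL_2(\mathbb{C})$. Recall that when we say $H$ acts on $A_1(\mathbb{C})$ we regard $A_1(\mathbb{C})$ as a filtered algebra equipped with its standard filtration, so the action preserves that filtration in the sense of Definition~\ref{defxx1.3}. Since $A_1(\mathbb{C})$ is a non-PI filtered AS regular algebra of dimension $2$, Theorem~\ref{thmxx0.1} applies and yields $H \cong kG$ for the finite group $G = G(H)$ of grouplike elements.

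The next step is to translate inner-faithfulness into faithfulness. For $H = kG$ the Hopf ideals are exactly the kernels of the canonical surjections $kG \twoheadrightarrow k[G/N]$ as $N$ ranges over the normal subgroups of $G$, and such an ideal annihilates the module $A_1(\mathbb{C})$ precisely when $N$ acts trivially. Hence the $kG$-module $A_1(\mathbb{C})$ is inner-faithful if and only if the kernel of the $G$-action is trivial, i.e. $G$ acts faithfully on $A_1(\mathbb{C})$. So we have reduced to describing the faithful finite group actions on $A_1(\mathbb{C})$.

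This is exactly the content of \cite[Proposition, p.~84]{AHV}: every finite subgroup of $\Aut(A_1(\mathbb{C}))$ is conjugate, within $\Aut(A_1(\mathbb{C}))$, to a finite subgroup of $SL_2(\mathbb{C})$ acting linearly on the generators $u,v$ (the appearance of $SL_2$ rather than $GL_2$ being forced by the relation $vu-uv=1$, equivalently by preservation of the symplectic form). Thus, up to conjugacy, $G$ is a finite subgroup of $SL_2(\mathbb{C})$, and the argument finishes with the classical classification: a finite subgroup of $SL_2(\mathbb{C})$ is, up to conjugacy, either cyclic, or the binary dihedral (dicyclic) group of order $4n$, or one of the binary tetrahedral, binary octahedral, binary icosahedral groups of orders $24$, $48$, $120$ -- these being, apart from the cyclic groups, exactly the preimages under $SL_2(\mathbb{C}) \twoheadrightarrow PSL_2(\mathbb{C})$ of the finite rotation subgroups of $PSL_2(\mathbb{C})$. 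This produces the list (1)--(5).

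There is essentially no obstacle here: the corollary is a direct combination of imported results. The only points deserving a line of care are the equivalence ``inner-faithful $kG$-module $\iff$ faithful $G$-action'' (immediate from the lattice of Hopf ideals of $kG$) and the observation that the conjugation in \cite{AHV} takes place in $\Aut(A_1(\mathbb{C}))$ rather than in $SL_2(\mathbb{C})$ itself, which is harmless since the conclusion is only claimed up to conjugacy.
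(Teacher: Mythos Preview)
Your proposal is correct and follows exactly the approach the paper intends: the paper offers no proof beyond the \qed, having already indicated in the introduction and just before the corollary that the result is obtained by combining Theorem~\ref{thmxx0.1} with \cite[Proposition, p.~84]{AHV}. Your write-up simply makes explicit the two small points the paper leaves tacit---that inner-faithfulness of a $kG$-module amounts to faithfulness of the $G$-action, and that the filtration-preserving hypothesis of Theorem~\ref{thmxx0.1} is in force---together with the classical ADE classification of finite subgroups of $SL_2(\mathbb{C})$.
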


The following result classifies the $H$-module algebra structures
on filtered AS-regular algebras of dimension $2$ which are non-PI.
The following is well-known after we have shown that $H$ is a group
algebra.

\begin{corollary}
\label{corxx5.9} Let $R$ be a non-PI filtered AS regular algebra of
dimension 2 and let $H$ be a finite dimensional Hopf algebra acting
on $R$ inner-faithfully and preserving the filtration of $R$.
Suppose that the $H$-action is not graded and that $R\not\cong
A_1(k)$. Then one of the following occurs.
\begin{enumerate}
\item
$R\cong k\langle u,v\rangle /(vu-quv-1)$ for $q\in k^\times$ not a root of
unity, and $H=kG$ where $G=C_m$
with a generator $\sigma \in \Aut(R)$ determined by
$$\sigma(u)=\xi u, \quad \sigma(v)=\xi^{-1} v$$
for some primitive $m$-th root of unity $\xi$.
\item
$R\cong k\langle u,v\rangle /(vu-uv-v)$ and $H=kG$ where $G=C_m$. Up
to a change of basis, a generator $\sigma \in \Aut(R)$ is determined
by
$$\sigma(u)=u, \quad \sigma(v)=\xi v$$
for some primitive $m$-th root of unity $\xi$.
\item
$R\cong k\langle u,v\rangle /(vu-uv-u^2-1)$ and $H=kC_2$
with a generator $\sigma \in \Aut(R)$ determined by
$$\sigma(u)=-u, \quad \sigma(v)=-v.$$
\end{enumerate}
\end{corollary}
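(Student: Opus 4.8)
The plan is to piece together the classification from the structural lemmas already established for non-PI filtered AS regular algebras of dimension 2 under a proper, non-graded $H$-action, and then to invoke the fact that $H$ is forced to be a group algebra (Theorem \ref{thmxx0.1}), after which the problem becomes the elementary one of classifying group actions by automorphisms preserving a filtration. First I would observe that since $R$ is non-PI and $R\not\cong A_1(k)$, Corollary \ref{corxx2.8} (together with Lemma \ref{lemxx2.9} and the case analysis of \S\ref{xxsec5}) restricts the relation of $R$, after a change of basis of $U$, to one of the three normal forms
\[
vu-quv-1,\qquad vu-uv-u^2-1,\qquad vu-uv-v,
\]
with $q\in k^\times$ not a root of unity in the first case (else $R$ would be PI). These are exactly the three algebras listed in (a), (b), (c). By Theorem \ref{thmxx0.1}, $H=kG$ for a finite group $G$ acting faithfully on $R$ by filtered algebra automorphisms; since the action is proper (Lemma \ref{lemxx1.5}(a) applies once we know $H$ is a group algebra, hence semisimple), $G$ acts on the 2-dimensional space $U$.

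Next I would, in each of the three cases, classify the finite groups of filtered automorphisms. For case (a), write $\sigma\in G$ acting linearly on $ku\oplus kv$; imposing that $\sigma$ fixes the relation $vu-quv-1$ up to nothing (the relation must be preserved exactly, since $\sigma(1)=1$) and using that $q$ is not a root of unity forces $\sigma$ to be diagonal, $\sigma(u)=\alpha u$, $\sigma(v)=\beta v$ with $\alpha\beta=1$; finiteness and inner-faithfulness then give $G=C_m$ with $\alpha=\xi$ a primitive $m$-th root of unity, $\beta=\xi^{-1}$. This is essentially Lemma \ref{lemxx5.3} applied on the associated graded algebra $k_q[u,v]$, lifted to $R$ via Remark \ref{rmk:HinSec2}. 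For case (c), the relation $vu-uv-u^2-1$ of $R$ has associated graded relation $vu-uv-u^2$, the Jordan plane; the automorphism group of the Jordan plane is solvable and its only torsion comes from $u\mapsto \pm u$, $v\mapsto \pm v + (\text{lower order})$, and checking compatibility with the ``$-1$'' constant term forces $\sigma(u)=-u$, $\sigma(v)=-v$ and $m=2$, giving $H=kC_2$. For case (b), the relation $vu-uv-v$ has $\gr_F R\cong k[u,v]$, and the analysis in Lemmas \ref{lemxx5.6} and \ref{lemxx5.7} (via the dual coaction) already pins down that, after a change of basis, $e_{11}=1$, $e_{12}=e_{21}=0$, so $\sigma(u)=u$ and $\sigma(v)=\xi v$ with $\xi$ a primitive $m$-th root of unity and $G=C_m$.

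The main obstacle I anticipate is case (b), and more precisely making the ``up to a change of basis'' statement precise: the normalizing transformations used in Lemmas \ref{lemxx5.6}--\ref{lemxx5.7} were chosen to bring the \emph{coaction} into diagonal form, and one must check that the \emph{same} basis realizes the claimed diagonal $\sigma$-action and is compatible with the fixed relation $vu-uv-v$ (in particular that no constant or linear term is reintroduced). This is bookkeeping rather than a genuine difficulty, but it is the place where an error could creep in. A secondary subtlety is confirming, in case (c), that the full torsion subgroup of $\Aut$ of the Jordan-plane-deformation $k\langle u,v\rangle/(vu-uv-u^2-1)$ is exactly $C_2$ and not larger; this follows by passing to $\gr_F R$, using that the degree-$1$ part of any finite-order automorphism of the Jordan plane must preserve the line $ku$ (the unique line on which the relation degenerates) and act there by a root of unity, and then matching the affine terms, but it should be stated carefully. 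Once these points are dispatched, the corollary follows by listing the three cases, each with its explicit group $G=C_m$ (resp.\ $C_2$) and generator $\sigma$, exactly as stated; indeed each such $\sigma$ manifestly gives an inner-faithful, filtration-preserving, non-graded action, so the list is also seen to be nonempty in each case.
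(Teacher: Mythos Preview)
Your proposal is correct and follows essentially the same route as the paper's proof: invoke Theorem \ref{thmxx0.1} to get $H=kG$, use Lemma \ref{lemxx1.5}(a) to obtain properness, apply Corollary \ref{corxx2.8} to reduce to the three relation forms, and then classify the filtered automorphisms case by case (with case (b) referring back to Lemmas \ref{lemxx5.6}--\ref{lemxx5.7}). The only presentational wrinkle is that you invoke Corollary \ref{corxx2.8} before recording properness, whereas Corollary \ref{corxx2.8} sits under Hypothesis \ref{hypxx2.3}; the paper establishes properness first and then applies Corollary \ref{corxx2.8}, so you should reorder those two steps. Your detailed discussion of case (c) is more than the paper gives (it simply says the case is similar and omits it), but your argument is the right one.
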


\begin{proof} By Theorem \ref{thmxx0.1},
$H=kG$ for some finite group $G$. As a consequence, the $H$-action
is proper by Lemma \ref{lemxx1.5}(a). By hypothesis, the $H$-action
on $R$ is not graded. By Corollary \ref{corxx2.8}, $R\cong k\langle
u,v\rangle/(r)$ where $r$ has the form:
\[
\begin{array}{ll}
\text{(a)} &r=vu-quv-1,\\
\text{(b)} &r=vu-uv-v, \text{~or}\\
\text{(c)} &r=vu-uv-u^2-1.
\end{array}
\]

Case (a): Since $R$ is not PI, $q$ is either 1 or not a root of
unity by Lemma \ref{lemxx2.9}. If $q=1$, $R\cong A_1(k)$, which is
excluded by hypothesis. Then, $q$ is not a root of unity. It is easy
to check that every filtered algebra automorphism $\sigma$ of $R$ is
of the form
$$\sigma(u)=\xi u, \quad \sigma(v)=\xi^{-1} v$$
for some primitive $m$-th root of unity $\xi$.

Case (b): See the proof of Lemma \ref{lemxx5.7}.
Case (c): The assertion can be proved similarly and is omitted.
\end{proof}

\section{Galois extensions and the proof of Theorem \ref{thmxx0.5}}
\label{xxsec6}
The goal of this section is to prove Theorem \ref{thmxx0.5} via the
use of Galois extensions.

\begin{definition}\cite[Definition 1.1]{CFM}
\label{defxx6.1}
Let $K$ be a Hopf algebra and $A$ be a right $K$-comodule algebra
with structure $\rho: A \rightarrow A \otimes K$.
Let $B=A^{co K}$. We say that $B\subset A$ is a (right) $K$-Galois
extension if the map $\beta: A\otimes_B A\to A\otimes K$ given by
$$\beta(a\otimes b)=(a\otimes 1)\rho(b)$$
is surjective.
\end{definition}

The following lemmas are well-known. Lemma \ref{lemxx6.4} is a
consequence of Lemma~\ref{lemxx6.2}. We use the convention that
$K:=H^\circ$, for $H$ a finite dimensional Hopf algebra.

\begin{lemma} \cite[Theorems 1.2 and 2.2]{CFM}
\label{lemxx6.2}
Let $H$ be a finite dimensional Hopf algebra and
$A$ a left $H$-module algebra. Then the following statements are equivalent.
\begin{enumerate}
\item
$A^H\subset A$ is right $K$-Galois.
\item
The map $A\# H\to \End(A_{A^H})$ is an algebra isomorphism and
$A$ is a finitely generated projective right $A^H$-module.
\item
$A$ is a left $A\# H$-generator.
\end{enumerate}
Suppose $A^H\subset A$ is right $K$-Galois. Then the following statements
are equivalent.
\begin{enumerate}
\item[(d)]
For any nonzero left integral element $t$, the corresponding
trace function  $\hat{t}: A\to A^H$ is surjective. (This holds if
$H$ is semisimple.)
\item[(e)]
$A$ is a generator for the category of right $A^H$-modules.
\item[(f)]
$A$ is a finitely generated projective left $A\#H$-module.
\end{enumerate}
In this case, $A^H$ and $A\# H$ are Morita equivalent. \qed
\end{lemma}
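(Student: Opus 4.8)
The plan is to derive every implication, and the concluding Morita equivalence, from one Morita context linking the smash product $A\# H$ with the invariant ring $B := A^H$, in the shape used by Cohen--Fishman--Montgomery. Here $A$ is an $(A\# H, B)$-bimodule via $(a\# h)\cdot x = a(h\cdot x)$ together with right multiplication by $B$, and a second copy of $A$ is a $(B, A\# H)$-bimodule whose right $A\# H$-action is twisted through the antipode $S$ of $H$; the two connecting maps are $\mu\colon A\otimes_B A \to A\# H$ and $\tau\colon A\otimes_{A\# H}A\to B$. I would first check the Morita associativity identities for the pair $(\mu,\tau)$ and identify the maps explicitly: using a basis $\{h_i\}$ of $H$ and the dual basis of $K = H^\circ$, one sees that under the canonical $(A\# H)$-bimodule isomorphism $A\# H \cong A\otimes K$ the map $\mu$ becomes exactly the Galois map $\beta$ of Definition~\ref{defxx6.1}, while $\tau$ is the trace map $a\otimes b\mapsto \hat t(ab)$ associated to a nonzero left integral $t\in H$. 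Consequently $\operatorname{im}\mu$ is the trace ideal of the left $A\# H$-module $A$ (after identifying the second copy of $A$ with $\Hom_{A\# H}(A, A\# H)$), and $\operatorname{im}\tau$ is the trace ideal in $B$ of $A$ as a right $A\# H$-module.

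Given this, the first block of equivalences is the Morita context lemma. Statement (a) says $\mu$ is surjective, which is exactly the assertion that $A$ is a generator among left $A\# H$-modules, i.e.\ (c); and when $\mu$ is onto, the context lemma makes it bijective and forces $A_B$ to be finitely generated projective with $A\# H \to \End(A_B)$ an isomorphism, i.e.\ (b). Conversely (b) provides that $A_B$ is finitely generated projective with $A\# H = \End(A_B)$, so $A$ is a generator over $A\# H$ (a finitely generated projective module is always a generator over its endomorphism ring), and $\operatorname{im}\mu = A\# H$ again. Symmetrically, once (a) holds so that $\mu$ is an isomorphism, the Morita context lemma applied on the $B$-side shows that surjectivity of $\tau$ --- which is precisely (d), since $\hat t$ being onto for one nonzero left integral is equivalent to its being onto for every nonzero left integral and to $\operatorname{im}\tau = B$ --- is equivalent to $\tau$ being bijective, to $A$ being a finitely generated projective left $A\# H$-module (f), and to $A$ being a generator of right $B$-modules (e). When (a) and (d) both hold, both connecting maps are isomorphisms, the context is strict, and $A\otimes_B -$ and $A\otimes_{A\# H}-$ are mutually inverse equivalences; hence $A\# H$ and $B$ are Morita equivalent. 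The parenthetical claim is immediate: if $H$ is semisimple, pick $t$ with $\epsilon(t) = 1$, so that $\hat t(b) = t\cdot b = \epsilon(t)b = b$ for $b\in B$ and $\hat t$ restricts to the identity on $B$, in particular surjective.

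I expect the only genuinely non-formal point to be the bookkeeping of the first paragraph: identifying $\mu$ with the Galois map $\beta$, identifying the second bimodule $A$ with $\Hom_{A\# H}(A, A\# H)$, and invoking the Kreimer--Takeuchi observation that for a finite-dimensional Hopf algebra $H$ the surjectivity of $\beta$ already forces its bijectivity. Everything afterwards is a turn of the Morita-theoretic crank. Since the statement is quoted verbatim as \cite[Theorems 1.2 and 2.2]{CFM}, in the paper it is simply cited; the above is the route I would take to reconstruct it.
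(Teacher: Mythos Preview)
Your proposal is correct and, as you yourself note in the final sentence, the paper gives no proof at all: the lemma is stated with a terminal \qed and attributed to \cite[Theorems 1.2 and 2.2]{CFM}. Your reconstruction via the Cohen--Fischman--Montgomery Morita context $(A\#H, A^H, A, A, \mu, \tau)$, with $\mu$ identified with the Galois map $\beta$ and $\tau$ with the trace pairing, is exactly the argument given in \cite{CFM}, so there is nothing to compare beyond the fact that you have supplied what the paper deliberately omits.
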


In the case that $H$ is semisimple, if $A^H\subset A$ is $K$-Galois,
then $A^H$ is Morita equivalent to $A\# H$. Also, $\gldim A\# H=\gldim A$
since $H$ is semisimple. We have the following remark.

\begin{remark} \label{remxx7.3}
 When  $H$ is semisimple and when $A^H$ is Morita equivalent to $A\# H$, we have that
$$\gldim A^H=\gldim A\# H=\gldim A.$$
\end{remark}

On the other hand, if $K$ is a group algebra $kG$, then $A$ is an
$K$-comodule algebra if and only if $A$ is a $G$-graded algebra.
Consider the following result.

\begin{lemma}
\label{lemxx6.4} \cite{Ul}
Let $H=kG$ and $A$ be a right $H$-comodule algebra. Let $u$ be the
identity of $G$. Then $A_u\subset A$ is $kG$-Galois if and only if
$A$ is strongly $G$-graded, if and only if $A_g A_{g^{-1}}=A_u$ for
all $g\in G$. \qed
\end{lemma}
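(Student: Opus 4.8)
The plan is to unwind all the definitions until the statement reduces to an elementary assertion about $G$-graded rings. First I would record the standard dictionary: a right $kG$-comodule algebra $A$ is precisely a $G$-graded algebra $A=\bigoplus_{g\in G}A_g$ with $A_gA_h\subseteq A_{gh}$, the comodule structure map being $\rho(a)=\sum_{g}a_g\otimes g$ where $a=\sum_g a_g$ is the homogeneous decomposition, so that $A^{co\,kG}=\{a\mid\rho(a)=a\otimes 1_{kG}\}=A_u$ is exactly the identity component. This matches the coinvariant subring of Definition \ref{defxx6.1} with the ring $A_u$ in the statement.

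Next I would make the Galois map completely explicit. For $a\in A$ and homogeneous $b\in A_g$ we have $\rho(b)=b\otimes g$, hence $\beta(a\otimes_{A_u}b)=(a\otimes 1)\rho(b)=ab\otimes g$; this is well defined on $A\otimes_{A_u}A$ because $A_uA_g\subseteq A_g$. Using the $k$-space decomposition $A\otimes kG=\bigoplus_{g}(A\otimes g)$, the $g$-component of the image of $\beta$ is the left $A$-submodule $A\,A_g:=\sum_{h}A_hA_g$ of $A$. Therefore $\beta$ is surjective if and only if $A\,A_g=A$ for every $g\in G$.

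Finally I would establish the two ring-theoretic equivalences. Reading off the degree-$u$ component of $A\,A_g=A$ gives $A_{g^{-1}}A_g=A_u$ (the only summand of $\sum_h A_hA_g$ landing in degree $u$ is the one with $h=g^{-1}$); conversely, if $A_{g^{-1}}A_g=A_u$ for all $g$, then for every $k$ one has $A_k=A_kA_u=A_kA_{g^{-1}}A_g\subseteq A_{kg^{-1}}A_g\subseteq A_k$, so $A_k\subseteq A\,A_g$ and hence $A\,A_g=A$. Replacing $g$ by $g^{-1}$ shows that ``$A\,A_g=A$ for all $g$'' is equivalent to ``$A_gA_{g^{-1}}=A_u$ for all $g$''. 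The latter is in turn equivalent to $A$ being strongly graded: one direction is the case $h=g^{-1}$, and for the other, $A_{gh}=A_uA_{gh}=A_gA_{g^{-1}}A_{gh}\subseteq A_gA_h\subseteq A_{gh}$ forces $A_gA_h=A_{gh}$ for all $g,h$.

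I do not anticipate a genuine obstacle here; the whole argument is bookkeeping, which is why the result is simply attributed to Ulbrich and quoted without proof. The only points deserving a moment's care are the well-definedness of $\beta$ on $A\otimes_{A_u}A$ and the correct isolation of homogeneous components when checking surjectivity graded-piece by graded-piece.
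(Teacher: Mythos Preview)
Your proposal is correct; the argument is the standard dictionary-plus-bookkeeping proof and each step checks out. The paper itself gives no proof of this lemma: it is stated with a citation to Ulbrich and closed with a \qed, so there is nothing to compare against beyond noting that you have supplied exactly the elementary verification the authors chose to omit.
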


Now we break the proof of Theorem  \ref{thmxx0.5} into two cases:
when $\gr_F R \cong k[u,v]$ and when $\gr_F R \not \cong k[u,v]$.
The first case is handled in the proposition below.

\begin{proposition}
\label{proxx6.5} Assume Hypothesis \ref{hypxx2.3} and suppose that
$\gr_F R\cong k[u,v]$. Then, up to isomorphism, $(H,R)$ occurs as
one of the following.
\begin{enumerate}
\item
$H = kC_m$, a cyclic group algebra, and
 $R\cong k\langle u,v\rangle/(vu-uv-v)$. Moreover, $R^H\subset R$ is not
$K$-Galois and  the global dimension of $R^H$ is 2.
\item \cite{AHV}
$H = kG$ where $G$ is a finite subgroup of $SL_2(k)$ and
$R=A_1(k)=k\langle u,v\rangle/(vu-uv-1)$. Also, $R^H\subset R$ is
$K$-Galois and $R^H$ is simple of global dimension 1.
\end{enumerate}
\end{proposition}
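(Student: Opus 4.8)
The plan is to first determine $R$ from the classification already available, and then treat the two resulting algebras separately. Under Hypothesis~\ref{hypxx2.3}, Corollary~\ref{corxx2.8} tells us that, up to change of basis, the relation of $R$ is $vu-quv-1$, $vu-uv-u^2-1$, or $vu-uv-v$, whose associated graded algebras are $k_q[u,v]$, the Jordan plane, and $k[u,v]$ respectively. Imposing $\gr_F R\cong k[u,v]$ therefore leaves exactly $R\cong A_1(k)$ (the case $vu-quv-1$ with $q=1$) and $R\cong k\langle u,v\rangle/(vu-uv-v)$. (We may assume $H\neq k$.)

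\emph{The case $R\cong k\langle u,v\rangle/(vu-uv-v)$.} Here the hypothesis of Lemma~\ref{lemxx2.7} is met, since $(a,b)\neq(0,0)$ for this relation and this is a basis-independent property of $R$. Thus $H=kC_m=k\langle h\rangle$ with $m\geq 2$, and after a change of basis the action is $h\cdot u=u$, $h\cdot v=\xi v$ for a primitive $m$-th root of unity $\xi$; exactly as in the proof of Proposition~\ref{proxx3.3}, $R^H=k[u][v^m;\sigma]$ with $\sigma(u)=u+m$ an automorphism of $k[u]$, so $\gldim R^H=2$. For the Galois assertion I would pass to the $K=(kC_m)^{\circ}$-coaction, which makes $R$ into a $\widehat{C_m}$-graded algebra: with respect to the $\mathbb{Z}_{\geq 0}$-grading of $R$ determined by $\deg u=0$ and $\deg v=1$ (under which the defining relation is homogeneous), the $\widehat{C_m}$-component of degree $j$ is the span of the monomials in which $v$ occurs a number of times congruent to $j$ modulo $m$. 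By Lemma~\ref{lemxx6.4}, $R^H\subset R$ is $K$-Galois if and only if $R$ is strongly $\widehat{C_m}$-graded, i.e.\ $R_gR_{g^{-1}}=R_e$ for every $g$. Taking $g$ a generator, every element of $R_g$ and of $R_{g^{-1}}$ has $v$-degree $\geq 1$, so every element of $R_gR_{g^{-1}}$ has $v$-degree $\geq m\geq 2$; in particular $1\notin R_gR_{g^{-1}}$ although $1\in R_e=R^H$. Hence $R$ is not strongly graded and $R^H\subset R$ is not $K$-Galois.

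\emph{The case $R=A_1(k)$.} Since $A_1(k)$ is non-PI, Theorem~\ref{thmxx0.1} forces $H=kG$ with $G$ a finite group acting by filtration-preserving automorphisms, and by \cite[Proposition on page~84]{AHV} (compare Corollary~\ref{corxx5.8}) $G$ is conjugate to a finite subgroup of $SL_2(k)$. Because $A_1(k)$ is a simple ring whose units are scalars, the only inner (indeed X-inner) automorphism is the identity, so the faithful $G$-action is X-outer and $A_1(k)\#kG$ is a simple ring. Setting $e=|G|^{-1}\sum_{g\in G}g\in A_1(k)\#kG$, one has $A_1(k)\cong (A_1(k)\#kG)e$ as left $A_1(k)\#kG$-modules; since $A_1(k)\#kG$ is simple the two-sided ideal generated by $e$ is everything, so this module is a generator, and Lemma~\ref{lemxx6.2}, (c)$\Rightarrow$(a), gives that $R^H\subset R$ is $K$-Galois. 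Then, again by Lemma~\ref{lemxx6.2}, $R^H$ is Morita equivalent to $A_1(k)\#kG$, which is simple, so $R^H$ is simple; and since $kG$ is semisimple, $\gldim A_1(k)\#kG=\gldim A_1(k)=1$, whence $\gldim R^H=1$ by Remark~\ref{remxx7.3}. All of this is also obtained directly in \cite{AHV}.

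The step I expect to require the most care is the $A_1(k)$ case: it leans on the classification of finite filtration-preserving automorphism groups of the Weyl algebra (that they all embed in $SL_2(k)$, from \cite{AHV}/Corollary~\ref{corxx5.8}) and on the simplicity of the smash product $A_1(k)\#kG$ for an X-outer action; granting these, the Galois/Morita transfer via Lemma~\ref{lemxx6.2} and the global-dimension bookkeeping are routine. The other case is a self-contained computation resting on Lemma~\ref{lemxx2.7} and the proof of Proposition~\ref{proxx3.3}.
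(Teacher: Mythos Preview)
Your argument is correct and follows the same overall strategy as the paper: reduce to the two algebras via Corollary~\ref{corxx2.8}, handle the $vu-uv-v$ case through Lemma~\ref{lemxx2.7} and Proposition~\ref{proxx3.3} plus the strongly-graded criterion of Lemma~\ref{lemxx6.4}, and handle $A_1(k)$ via Theorem~\ref{thmxx0.1} and Lemma~\ref{lemxx6.2}.

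The differences from the paper are minor but worth noting. For $G\subset SL_2(k)$, the paper does not invoke the AHV classification (which is stated over~$\mathbb{C}$, cf.\ Corollary~\ref{corxx5.8}); instead it observes that since $(a,b)=(0,0)$ for the relation $vu-uv-1$, Lemma~\ref{lemxx2.6} gives $\hdet_H(\gr_F R)=\epsilon$, which for a group acting on $k[u,v]$ is precisely the condition $G\subset SL_2(k)$. This is more self-contained over a general algebraically closed field of characteristic zero. For the Galois property in the Weyl case, the paper uses the implication (b)$\Rightarrow$(a) of Lemma~\ref{lemxx6.2} (the well-known isomorphism $A_1(k)\#G\cong\End(A_1(k)_{A_1(k)^G})$ for a simple ring with an outer finite group action), whereas you use (c)$\Rightarrow$(a) via simplicity of the smash product; both routes are standard. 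Finally, the paper cites \cite{AHV} directly for $A_1(k)^G$ being simple of global dimension~1, while you recover this from the Morita equivalence; either way is fine.
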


\begin{proof} By Corollary \ref{corxx2.8}, the relation is of the form
$r=vu-uv-1$ or $r=vu-uv-v$.

(a) If $r=vu-uv-v$, then by Lemma  \ref{lemxx2.7}(c) we have $H\cong k C_m$ and by
Lemma~\ref{lemxx2.7}(e), the homological determinant of the $H$-action on $\gr_F R$ is non-trivial. By
Proposition~\ref{proxx3.3}, $R^H$ has global dimension 2.

Let $\deg v=1\in C_m$ and $\deg u=0$ and write $R=\oplus_{s=0}^{m-1}
R_s$ as an $C_m$-graded algebra with respect to the degree defined
as above. It is easy to check that $1\not\in R_{1} R_{m-1}$. Thus
$R$ is not strongly $C_m$-graded, so $R^H=R_0\subset R$ is not
$K$-Galois by Lemma \ref{lemxx6.4}.

(b) If $r=vu-uv-1$, then $R$ is the Weyl algebra $A_1(k)$. By
Theorem \ref{thmxx0.1}, $H$ is a group algebra $kG$ for a finite
group $G$. By Lemma \ref{lemxx2.6}, the homological determinant of
the $H$-action on $\gr_F R$ is trivial. This means that $G\subset
SL_2(k)$. Then classical results imply that $A_1(k)^G$ is simple and
has global dimension 1 \cite[page 83]{AHV}. Since $A_1(k)$ is simple
and since $G$ is finite and does not contain any non-trivial inner
automorphisms, it is well known that $A_1(k) \# G \cong
\End(A_1(k)_{A_1(k)^G})$.  Moreover as Lemma \ref{lemxx6.2}(b)
holds, we have that $A_1(k)^G\subset A_1(k)$ is $(kG)^\circ$-Galois
by Lemma~\ref{lemxx6.2}(a).
\end{proof}

\begin{proposition}
\label{proxx6.6} Assume Hypothesis \ref{hypxx2.3} and suppose that
$\gr_F R\not\cong k[u,v]$ and that $R$ is non-PI. Then, up to
isomorphisms, $(H,R)$ occurs as one of the following.
\begin{enumerate}
\item
$H=kC_2$ and $R\cong k\langle u,v\rangle/(vu-uv-u^2-1)$,
and $R^H\subset R$ is
$K$-Galois. The global dimension of $R^H$ is 2.
\item
$H=kC_m$ and $R\cong k\langle u,v\rangle/(vu-quv-1)$ where $q$
is not a root of unity.
Here, $R^H\subset R$ is $K$-Galois and the global dimension of
$R^H$ is 2.
\end{enumerate}
\end{proposition}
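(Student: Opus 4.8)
The plan is to first identify the pair $(H,R)$ and then establish the Galois and global-dimension assertions in turn.

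\textbf{Identifying $(H,R)$.} Under Hypothesis~\ref{hypxx2.3} with $R$ non-PI, the $H$-action is not graded, and since $\gr_F R\not\cong k[u,v]$ while $\gr_F A_1(k)\cong k[u,v]$ we have $R\not\cong A_1(k)$. So Corollary~\ref{corxx5.9} applies and forces $R\cong k\langle u,v\rangle/(r)$ with $r$ equal to $vu-quv-1$ ($q$ not a root of unity), $vu-uv-v$, or $vu-uv-u^2-1$, together with the corresponding group algebra $H=kG$. The relation $vu-uv-v$ has $\gr_F R\cong k[u,v]$, contrary to hypothesis, so it is excluded. Hence, up to isomorphism, $(H,R)$ is the pair in part~(1) --- from $r=vu-uv-u^2-1$, with $H=kC_2$ and generator acting by $u\mapsto-u,\ v\mapsto-v$ --- or the pair in part~(2) --- from $r=vu-quv-1$, with $H=kC_m$ and generator acting by $u\mapsto\xi u,\ v\mapsto\xi^{-1}v$.

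\textbf{The $K$-Galois property.} Write $H=kC_m$ (with $m=2$ in case~(1)) and $K=H^{\circ}$. Since $k$ is algebraically closed of characteristic $0$, $K\cong k\widehat{C_m}$ as Hopf algebras, and the $K$-coaction on $R$ is precisely the grading of $R$ by $\widehat{C_m}\cong\mathbb{Z}/m$ in which the PBW monomial $u^av^b$ has degree $a-b$ in case~(2) and $a+b$ in case~(1); moreover the coinvariant subalgebra of this coaction is $R^H$. By Lemma~\ref{lemxx6.4}, $R^H\subset R$ is $K$-Galois if and only if $R$ is strongly graded, i.e.\ $R_gR_{g^{-1}}=R_e$ for all $g$; and since $\{g:R_gR_{g^{-1}}=R_e\}$ is a subgroup of $\widehat{C_m}$, it suffices to check one generator. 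In case~(1) this amounts to $1\in R_1R_1$, which holds because $vu,\ uv,\ u^2\in R_1R_1$ and $1=vu-uv-u^2$ by the defining relation. In case~(2) one needs $1\in R_1R_{-1}$: the element $v^{m-1}u^{m-1}$ lies in $R_1R_{-1}$ and, by a direct PBW computation using $vu=quv+1$, has nonzero constant term (a polynomial in $q$ that does not vanish since $q$ is not a root of unity), while every higher PBW monomial $u^av^b$ occurring in it again lies in $R_1R_{-1}$ (split off a left factor $u$ when $a\geq 1$, or $v^{m-1}$ when $a=0$); subtracting the higher terms yields $1\in R_1R_{-1}$. So $R$ is strongly graded and $R^H\subset R$ is $K$-Galois.

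\textbf{Global dimension.} Since $H=kC_m$ is semisimple and $R^H\subset R$ is $K$-Galois, Lemma~\ref{lemxx6.2} gives that $R^H$ is Morita equivalent to $R\#H$, so $\gldim R^H=\gldim R\#H=\gldim R$ by Remark~\ref{remxx7.3}. Here $\gldim R\leq\gldim\gr_F R=2$ by the standard filtered--graded comparison, and $\gldim R\geq 2$ because --- in contrast with $A_1(k)$ --- $R$ admits a one-dimensional module (e.g.\ $u\mapsto\sqrt{-1},\ v\mapsto 0$ in case~(1); $u\mapsto 1,\ v\mapsto(1-q)^{-1}$ in case~(2)), which one checks has projective dimension exactly $2$ from the length-two resolution by its two generators. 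Alternatively, one can compute $R^H$ explicitly (for instance $R^{C_2}=k\langle u^2,uv,v^2\rangle$ in case~(1)) and read off $\gldim R^H=2$ directly, as in the proof of Proposition~\ref{proxx3.3}. In both cases $\gldim R^H=2$, completing the proof.

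\textbf{Main difficulty.} The substantive points are the strong-gradedness in the second step --- producing the unit inside $R_gR_{g^{-1}}$ --- and the projective-dimension count in the third. Both rest on the same feature: the lower-order part of $r$ is a nonzero scalar that cannot be removed by any choice of the generating space $U$, so $R$ carries a finite-length module (a ``closed point''), which is exactly what $A_1(k)$ lacks and is why $\gldim R^H$ equals $2$ rather than $1$ here.
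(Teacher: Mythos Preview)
Your argument is correct and follows essentially the same route as the paper: identify $(H,R)$ via Corollary~\ref{corxx5.9}, show strong gradedness (so $K$-Galois by Lemma~\ref{lemxx6.4}) by producing $1\in R_1R_{-1}$ from the constant term of $v^{m-1}u^{m-1}$, and then read off $\gldim R^H=\gldim R$ from the Morita equivalence in Lemma~\ref{lemxx6.2} and Remark~\ref{remxx7.3}. The only cosmetic differences are that the paper computes the constant term explicitly as $\prod_{i=1}^{m-1}[i]_q$ (your ``polynomial in $q$ that does not vanish'' is this product), and the paper simply uses $\gldim R=2$ without the extra justification you supply via a one-dimensional module.
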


\begin{proof} Similar to the beginning of the proof of the last
proposition, we may assume that $H$ is a group algebra $kG$ and that
the relation is of the form $r=vu-uv-u^2-1$ or $r=vu-quv-1$
[Corollary \ref{corxx2.8}].

(a) In this case $\gr_F R=k_J[u,v]$. The induced $H$-action on
$\gr_F R$ is inner-faithful by Lemma \ref{lemxx1.5}(a,c). To avoid
the trivial case, we assume that $H\neq k$. Now $H = k C_2=k \langle
\sigma \rangle$ by Corollary \ref{corxx5.9}(c). Furthermore, the
$\sigma$-action on $U$ is given by $\sigma(u)= -u$ and $\sigma(v)=
-v$.

Now $K\cong kC_2$ and write $R=R_{\sigma+}\oplus R_{\sigma-}$ where
$R_{\sigma\pm}=\{f\in R\mid \sigma(f)\pm f=0\}$. It is easy to see
that $1\in R_{\sigma-}^2$. Hence $R$ is strongly $C_2$-graded. By
Lemmas \ref{lemxx6.2} and~\ref{lemxx6.4}, $R^H\subset R$ is
$H^{\circ}$-Galois and $R^H$ is Morita equivalent to $R\# H$.
Therefore $R^H$ has global dimension 2 by Remark~\ref{remxx7.3}.

(b) The remaining case is when $\gr_F R=k_q[u,v]$ where $q$ is not a
root of unity. Similarly, we may assume that $H$-action on $U=ku \oplus kv$ and
hence on $\gr_F R$ are inner-faithful. Since the homological
determinant of the $H$-action is trivial, we have that $r=vu-quv-1$. By
Corollary \ref{corxx5.9}(a), $H=k C_m$ with a generator $\sigma\in
C_m$ where $\sigma(u)=\xi u$, $\sigma(v)=\xi^{-1}v$ and $\xi$ is a
primitive $m$-th root of unity. The dual Hopf algebra $K(:=H^\circ)$
is also isomorphic to $k C_m$ with a generator $\tau$ such that
$\rho(u)=u\otimes \tau$ and $\rho(v)=v\otimes \tau^{-1}$. The
assertion is verified by the following lemma.
\end{proof}

\begin{lemma}
\label{lemxx6.7} Let $R=k\langle u,v\rangle/(vu-quv-1)$ and let
$C_m=\langle\tau\rangle$ coact on $R$ by $\rho(u)=u\otimes \tau$ and
$\rho(v)=v\otimes \tau^{-1}$. If the order of $q$ is at least $m$,
then $R^{co C_m}\subset R$ is a Galois extension and $R^{co C_m}$
has global dimension 2.
\end{lemma}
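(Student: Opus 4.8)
The plan is to establish the Galois property via the ``strongly graded'' criterion of Lemma~\ref{lemxx6.4}, and then to obtain the global dimension by comparing $R^{co C_m}$ with the smash product through the Morita equivalence of Lemma~\ref{lemxx6.2}.

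\emph{The induced grading.} Since $\rho$ is an algebra map, on the PBW basis $\{u^av^b\}_{a,b\ge0}$ of $R$ it is given by $\rho(u^av^b)=u^av^b\otimes\tau^{\,a-b}$. Hence $R=\bigoplus_{i\in\mathbb Z/m}R_i$ with $R_i=\operatorname{span}\{u^av^b\mid a-b\equiv i\pmod m\}$ and $R^{co C_m}=R_0$; in particular $u\in R_1$, $v\in R_{-1}$, and the relation $vu=quv+1$ is homogeneous for the weight $\deg_u-\deg_v$. By Dade's theorem, $R$ is strongly $C_m$-graded if and only if $R_gR_{g^{-1}}=R_0$ for all $g\in C_m$; since $C_m$ is cyclic, this reduces to the single assertion $1\in R_1R_{-1}$ (the latter forces $R_1R_{-1}=R_0$, being an $R_0$-sub-bimodule of $R_0$ containing $1$, and then $R_kR_{-k}=R_0$ follows by induction).

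\emph{The key computation.} Iterating the identity $vu^k=q^ku^kv+[k]_qu^{k-1}$, where $[k]_q=1+q+\cdots+q^{k-1}$, yields the normal form
\[
v^{m-1}u^{m-1}=q^{(m-1)^2}u^{m-1}v^{m-1}+\sum_{i=1}^{m-2}\lambda_i\,u^iv^i+[m-1]_q!\,,\qquad [k]_q!:=\textstyle\prod_{j=1}^k[j]_q.
\]
The hypothesis $\operatorname{ord}(q)\ge m$ says that no $q^j$ with $1\le j\le m-1$ is $1$, so each $[j]_q=(q^j-1)/(q-1)$ is nonzero and hence $[m-1]_q!\ne0$. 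Now $v^{m-1}\in R_1$ (since $-(m-1)\equiv1\pmod m$) and $u^{m-1}\in R_{-1}$, so $v^{m-1}u^{m-1}\in R_1R_{-1}$; likewise $u^iv^i=(u^iv^{i-1})\,v\in R_1R_{-1}$ for every $i\ge1$, because $u^iv^{i-1}\in R_1$. Solving the displayed identity for the constant term places $[m-1]_q!\cdot1$, hence $1$, in $R_1R_{-1}$. Thus $R$ is strongly $C_m$-graded, and Lemma~\ref{lemxx6.4} gives that $R^{co C_m}=R_0\subset R$ is $K$-Galois.

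\emph{The global dimension.} The dual Hopf algebra $H:=K^\circ\cong kC_m$ is semisimple because $\operatorname{char}k=0$, so its trace function is surjective; Lemma~\ref{lemxx6.2} then yields a Morita equivalence $R^{co C_m}\sim R\#H$, whence $\gldim R^{co C_m}=\gldim R\#H=\gldim R$ by Remark~\ref{remxx7.3}. It remains to observe that $\gldim R=2$: the filtered--graded inequality gives $\gldim R\le\gldim\gr_F R=\gldim k_q[u,v]=2$, while $\gldim R\ge2$ since the quantum Weyl algebra $R$ with $q\ne1$ is not hereditary, which is classical. The main difficulty is the key computation: one must produce \emph{some} element of $R_1R_{-1}$ with nonzero constant term, and the decisive point is the ``flip'' $v^{m-1}\in R_1$, $u^{m-1}\in R_{-1}$ --- for $m=2$ one could simply use $1=vu-quv$, but for $m\ge3$ the element $vu$ no longer lies in $R_1R_{-1}$ --- together with the identification of the constant term of $v^{m-1}u^{m-1}$ as the $q$-factorial $[m-1]_q!$, which is precisely where the hypothesis $\operatorname{ord}(q)\ge m$ enters.
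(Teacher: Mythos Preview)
Your proof is correct and follows essentially the same route as the paper: you show $R$ is strongly $C_m$-graded by producing $1\in R_1R_{-1}$ via the normal form of $v^{m-1}u^{m-1}$ (whose constant term is the $q$-factorial $[m-1]_q!$, nonvanishing under the hypothesis on $\operatorname{ord}(q)$), and then invoke Lemmas~\ref{lemxx6.4} and~\ref{lemxx6.2} together with Remark~\ref{remxx7.3}. The paper packages the same computation as $v^su^s=\prod_{i=1}^s(q^i c+[i]_q)$ with $c=uv$, reading off $f_{m-1}(0)=[m-1]_q!$, but this is the same identity; your added remark that $\gldim R=2$ (rather than merely $\le 2$) is a welcome clarification the paper leaves implicit.
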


\begin{proof} Write $R=\bigoplus_{s=0}^{m-1} R_s$ be the
$C_m$-graded decomposition where
$$R_s=\{f\in R \mid \rho(f)=f\otimes \tau^s\}.$$
In particular, $R_0=R^{co C_m}$. An easy computation shows that
$R_0$ is generated by $a:=u^m$, $b:=v^m$ and $c:=uv$.

Note that $R_1$ is generated by $u$ and $v^{m-1}$, and  that
$R_{m-1}$ is generated by $u^{m-1}$ and $v$. Thus $R_1R_{m-1}$
contains elements
$$u^m,\; v^m, \; v^{m-1}u^{m-1}, \quad {\text{and}}\quad uv=:c.$$
Using the relation $vu=quv+1$, we obtain that, for each $m>
s\geqslant 1$, $v^{s}u^{s}=f_{s}(c)$ for some polynomial
$f_{s}(t)\in k[t]$ of degree $s$. Moreover, $cu=u(qc+1)$ implies that $c^n
u=u(qc+1)^n$, so
$$\begin{aligned}
f_s(c)&= v^s u^s=v (v^{s-1}u^{s-1}) u\\
&= v f_{s-1} (c) u=vu f_{s-1}(qc+1)\\
&=(qc+1)f_{s-1}(qc+1).
\end{aligned}
$$
By induction, we have that
$$f_s(c)=\prod_{i=1}^s (q^ic+[i]_q)$$
where $[i]_q=1+q+\cdots+ q^{i-1}$. If the order of $q$ is at least
$m$, then $f_{m-1}(0)\in k^{\times}$. Recall that $c=uv \in R_1 R_{m-1}$. Since $f_{m-1}(0)=f_{m-1}(c)-c
(g(c))\in R_1 R_{m-1}$ for some $g(c)\in R_0$, we have that $1\in
R_1 R_{m-1}$ and $R_0=R_1 R_{m-1}$.

For any $l\geqslant 1$, we have by induction that
$R_{l}R_{m-l}\supseteq (R_1)^l (R_{m-1})^l=R_0$. This shows that $R$
is strongly $C_m$-graded. By Lemmas \ref{lemxx6.2} and
\ref{lemxx6.4}, $R^{co C_m}\subset R$ is $C_m$-Galois and $R^{co
C_m}$ is Morita equivalent to $R\# (kC_m)$. As a consequence, $R^{co
C_m}$ has global dimension two by Remark \ref{remxx7.3}.
\end{proof}

We are now ready to prove Theorem \ref{thmxx0.5}.

\begin{proof}[Proof of  Theorem \ref{thmxx0.5}]
By Theorem \ref{thmxx0.1}, $H$ is semisimple. Hence the $H$-action is
proper [Lemma \ref{lemxx1.5}(a)]. Therefore Hypothesis
\ref{hypxx2.3} holds. If $\gr_F R\cong k[u,v]$, the assertion follows from Proposition
\ref{proxx6.5}. If $\gr_F R\not\cong k[u,v]$, then the result
follows from Proposition \ref{proxx6.6}.
\end{proof}

\subsection*{Acknowledgments}
The authors thank the referee for providing many insightful
suggestions that improved the exposition of this work. C. Walton and
J.J. Zhang were supported by the U.S. National Science Foundation:
Grants DMS-1102548 and DMS-0855743, respectively. Y.H. Wang was
supported by the Natural Science Foundation of China: Grant
\#10901098 and \#11271239.

\providecommand{\bysame}{\leavevmode\hbox
to3em{\hrulefill}\thinspace}
\providecommand{\MR}{\relax\ifhmode\unskip\space\fi MR
}
\providecommand{\MRhref}[2]{%

\href{http://www.ams.org/mathscinet-getitem?mr=#1}{#2}
}
\providecommand{\href}[2]{#2}

\end{document}